\documentclass{article}
\usepackage{dirtytalk}
\usepackage[margin=3cm]{geometry}
\usepackage[utf8]{inputenc}
\usepackage[english]{babel}
\usepackage{amsthm}
\usepackage{amsmath}
\usepackage{amssymb}
\usepackage{amsfonts}
\usepackage{xspace}
\usepackage{hyperref}
\usepackage{mathtools}
\usepackage{tikz-cd}
\usepackage{hyperref}
\usepackage{lipsum}
\usepackage[style=alphabetic]{biblatex}
\hypersetup{
    colorlinks=true,
    linkcolor=blue,
    filecolor=magenta,      
    urlcolor=cyan,
}
\usepackage[title]{appendix}
\theoremstyle{definition}
\theoremstyle{plain}
\newtheorem*{theorem*}{Theorem}
\newtheorem{theorem}{Theorem}[section]
\newtheorem{definition}[theorem]{Definition}
\newtheorem{lemma}[theorem]{Lemma}
\newtheorem{corollary}[theorem]{Corollary}
\theoremstyle{remark}
\newtheorem*{remark}{Remark}

\newcommand\blfootnote[1]{%
  \begingroup
  \renewcommand\thefootnote{}\footnote{#1}%
  \addtocounter{footnote}{-1}%
  \endgroup
}
\DeclareMathOperator{\Div}{Div}
\newcounter{Chapcounter}

\newcommand{\chapter}[1] 
{ {\centering          
  \addtocounter{Chapcounter}{1} \Large \underline{\textbf{ \color{blue} Chapter \theChapcounter: ~#1}} }   
  \addcontentsline{toc}{section}{ \color{blue} Chapter:~\theChapcounter~~ #1}    
}

\addbibresource{ref.bib}

\title{Convex Bodies associated to Linear series of Adelic Divisors on Quasi-Projective Varieties}
\author{Debam Biswas}
\date{}

\begin{document}

\maketitle
\begin{abstract}
In this article we define and study convex bodies associated to linear series of adelic divisors over quasi-projective varieties that have been introduced recently by Xinyi Yuan and Shou-Wu Zhang. Restricting our attention to big adelic divisors, we deduce properties of volumes obtained by Yuan and Zhang using different convex geometric arguments. We go on to define augmented base loci and  restricted volumes of adelic divisors following the works of Michael Nakamaye and develop a similar study using convex bodies to obtain analogous properties for restricted volumes. We closely follow methods developed originally by Robert Lazarsfeld and Mircea Mustață.
\end{abstract}
\section*{Keywords:}
\textbf{Adelic Divisors,\ Volumes and Restricted Volumes,\ Augmented Base locus,\ Okounkov Bodies }
\section*{Mathematics Subject Classification:}
\textbf{\ 14G40,\ 14C40,\ 52A27}
\blfootnote{\textbf{Debam Biswas,\ Department of Mathematics,\ University of Regensburg\\
Universitatstrasse 31, 93053, Regensburg}\\
Email:\ \href{mailto:debambiswas@gmail.com}{debambiswas@gmail.com}}
\section{Introduction}
The theory of Okounkov bodies to study linear systems of line bundles on a projective variety was introduced by Russian mathematician Andrei Okounkov in his articles \cite{okou1} and \cite{okou2}. Given a linear series of an ample line bundle on a projective variety, he introduced certain convex bodies, which later came to be known as \emph{Okounkov bodies} whose convex geometric properties encode interesting invariants of the graded series. In their article \cite{lazarsfeld2008convex} Robert Lazarsfeld and Mircea Mustață noticed that the constructions of Okounkov generalise from ample line bundles to arbitary big line bundles on projective varieties. In their paper \cite{lazarsfeld2008convex} they develop a systematic study of Okounkov bodies for big line bundles and prove various properties of volumes such as continuity, Fujita approximation and others. They also consider the notion of \emph{restricted volumes} along a closed sub-variety and prove properties analogous to those of ordinary volumes.\\
A crucial feature of the approach in \cite{lazarsfeld2008convex} is that the construction of Okounkov bodies makes sense even when the variety is not projective as long as we have a graded series of the space of global sections on our given line bundle. In this article we use this observation to construct Okounkov bodies for \say{compactified} line-bundles on quasi-projective varieties. In their recent pre-print \cite{yuan2021adelic} Xinyi Yuan and Shou-Wu Zhang introduced the notion of \emph{adelic divisors} on a quasi-projective variety $U$ over a field. They manage to put a topology on the space of all divisors which come from projective models $X_i$ of $U$ and consider all divisors which are ''compactified'' with this topology. In other words an \emph{adelic divisor} on a nornmal quasi projective variety $U$ is given by the data $\{X_i,D_i\}$  and a sequence of positive rationals $q_i$ converging to 0 where $X_i$ are projective models of $U$, $D_i$ are $\mathbb{Q}$-divisors on $X_i$ with $D_i|_U=D_j|_U$ for all $i,j$ such that the following \say{Cauchy condition} holds.
\[-q_jD_0\le D_i-D_j\le q_jD_0\ \forall\ i\ge j\]
Here inequalities signifiy effectivity relations holding in a common projective model (see section 2.4 of \cite{yuan2021adelic} for details). As a result of their consideration, given any divisor $D$ on $U$ and an adelic compactification on $D$ denoted by $\overline{D}$, we get a space of adelic global sections $H^0(U,\overline{D})$ which is a \textbf{finite dimensional} sub-space of all global sections $H^0(U,O(D))$. Hence we can consider the notions of volumes similarly to the projective case and it is shown in \cite{yuan2021adelic} that these volume functions shows properties analogous to the classical projective volumes (see \cite{yuan2021adelic}, section 5). However following the approach in \cite{lazarsfeld2008convex} in this article we construct \textbf{Okounkov bodies} $\Delta(\overline{D})$ for the graded series $\{H^0(U,m\overline{D})\}_{m\in\mathbb{N}}$. The construction is essentially a special case of the construction sketched in Definition 1.16 of \cite{lazarsfeld2008convex} where we take $W_m=H^0(U,m\overline{D})\subseteq H^0(U,O(D))$. If the divisor $\overline{D}$ is big \emph{i.e} it has positive volume as defined in \cite{yuan2021adelic}, we show that the Lebesgue volume of the body is essentially the same as the algebraic volume upto scaling. The first main theorem of our article is as follows
\begin{theorem*}[A]
Suppose we have a big adelic divisor $\overline{D}$ on a normal quasi-projective variety $U$ and suppose $\Delta(\overline{D})$ is the Okounkov body associated to $\overline{D}$. Furthermore suppose $\widehat{\emph{vol}}(\overline{D})$ be the adelic volume defined in Theorem 5.2.1 of \cite{yuan2021adelic}. Then we have 
$$\emph{vol}_{\mathbb{R}^d}(\Delta(\overline{D}))=\lim_{m\to\infty}\frac{\emph{dim}_K(\overline{H^0}(U,m\overline{D}))}{m^d}=\frac{1}{d!}\cdot \widehat{\emph{vol}}(\overline{D})$$
\end{theorem*}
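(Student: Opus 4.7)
The overall strategy is to adapt the Lazarsfeld--Mustață machinery from \cite{lazarsfeld2008convex} to the adelic setting, the point being that the construction of $\Delta(\overline{D})$ only uses the graded linear series $W_\bullet = \{H^0(U, m\overline{D})\}_{m\ge 0}$ as a subspace of $H^0(U,mD)$ together with a rank-$d$ valuation on the function field. I would first fix an admissible flag $Y_\bullet$ on a smooth point of some projective model $X_0$ of $U$ that dominates all the $X_i$ appearing in the adelic presentation of $\overline{D}$, and use the associated valuation $\nu : K(U)^\times \to \mathbb{Z}^d$. Since the restriction $K(X_0) \to K(U)$ is an isomorphism, this gives a valuation on sections of $m\overline{D}$ and hence a graded semigroup
\[
\Gamma(\overline{D}) = \bigl\{(\nu(s), m) : m\ge 1,\ s \in H^0(U, m\overline{D})\setminus\{0\}\bigr\} \subseteq \mathbb{Z}^d \times \mathbb{N}.
\]
By the standard argument (injectivity of $\nu$ on linearly independent sections of distinct $\nu$-value), one has $\#\Gamma_m = \dim_K H^0(U, m\overline{D})$, where $\Gamma_m$ is the $m$-th slice.

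The heart of the matter is to verify that $\Gamma(\overline{D})$ satisfies the three conditions needed to invoke the general convex-geometric theorem (Proposition 2.1 / Theorem 2.13 of \cite{lazarsfeld2008convex}): (1) $\Gamma$ generates $\mathbb{Z}^{d+1}$ as a group; (2) there is a bounded set $B \subset \mathbb{R}^d$ with $\Gamma_m \subseteq m B$; and (3) $\Gamma$ spans a full-dimensional cone in $\mathbb{R}^{d+1}$. Condition (2) follows from the fact that for any adelic divisor, $H^0(U,m\overline{D}) \subseteq H^0(X_0, m D_0')$ for some model $D_0'$ by the Cauchy condition on the $\{D_i\}$, and the classical projective case provides the bound $B$. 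Condition (1) is obtained by a small perturbation / choice of flag: after replacing $\overline{D}$ by $\overline{D} + $ a small ample correction (which does not affect the volume in the limit) and using bigness, we find sections realizing each standard basis vector of $\mathbb{Z}^d$ modulo a fixed one, as in \cite{lazarsfeld2008convex}. Condition (3), the full-dimensionality, is precisely the bigness hypothesis $\widehat{\mathrm{vol}}(\overline{D}) > 0$, via the growth estimate $\dim_K H^0(U, m\overline{D}) \gtrsim m^d$ guaranteed by Theorem 5.2.1 of \cite{yuan2021adelic}.

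Granting these, the Khovanskii/Lazarsfeld--Mustață theorem yields existence of the limit and the equality
\[
\mathrm{vol}_{\mathbb{R}^d}(\Delta(\overline{D})) = \lim_{m\to\infty} \frac{\#\Gamma_m}{m^d} = \lim_{m\to\infty}\frac{\dim_K H^0(U, m\overline{D})}{m^d}.
\]
The final identification with $\widehat{\mathrm{vol}}(\overline{D})/d!$ is then immediate from the definition of $\widehat{\mathrm{vol}}$ in Theorem 5.2.1 of \cite{yuan2021adelic} as $d!$ times the asymptotic growth rate of the same dimension function (the a priori $\limsup$ there becomes a genuine limit precisely by the Okounkov body computation just performed).

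The main obstacle I expect is the verification of condition (1), generation of $\mathbb{Z}^{d+1}$, in the adelic setting: one must show that enough ``independent'' sections of $H^0(U, m\overline{D})$ exist to realize arbitrary lattice directions, despite the fact that $H^0(U,m\overline{D})$ is a strict subspace of $H^0(U, mD)$ cut out by delicate Arakelov-type effectivity conditions. The likely fix is to reduce to the projective case on a well-chosen model by exploiting that an adelic ample perturbation $\overline{D} + \epsilon \overline{A}$ has $H^0(U, m(\overline{D}+\epsilon \overline{A}))$ containing plenty of sections coming from $H^0(X_i, m D_i + m\epsilon A_i)$ for $i$ large, and then pass to the limit using continuity of the volume (also established via the convex body).
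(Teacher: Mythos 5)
Your overall strategy matches the paper's: construct $\Gamma(\overline{D})$, verify the Lazarsfeld--Musta\c{t}\u{a} semigroup axioms, and invoke their convex-geometric theorem. The boundedness verification via the Cauchy inclusion $H^0(U,m\overline{D})\hookrightarrow H^0(X_j, m(D_j+q_jD_0))$ and the reduction to the classical projective bound is exactly the paper's Lemma~\ref{lemma:boundadel}. You also correctly observe that the conclusion that the $\limsup$ in the definition of $\widehat{\mathrm{vol}}$ is a limit should come out of the Okounkov-body argument, not be assumed from Yuan--Zhang.

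However, the step you flag as the main obstacle --- verifying that $\Gamma(\overline{D})$ generates $\mathbb{Z}^{d+1}$ --- is not resolved by the fix you sketch, and as written it is circular and in the wrong direction. Replacing $\overline{D}$ by $\overline{D}+\epsilon\overline{A}$ \emph{enlarges} the spaces of sections, so the extra sections one gains live in $H^0(U,m(\overline{D}+\epsilon\overline{A}))$, not in the space $H^0(U,m\overline{D})$ for which you must produce lattice generators. Moreover, passing to the limit $\epsilon\to 0$ ``using continuity of the volume (also established via the convex body)'' begs the question: continuity (Corollary~\ref{corol:cont}) is derived in the paper as a consequence of Theorems~\ref{thm:okoun} and~\ref{theorem:globalbody}, so it cannot be invoked while proving the generation property that underlies those theorems. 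In any case continuity of volumes alone would not imply equality of the limiting \emph{bodies} or of their semigroups, which is what the generation condition needs.

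The paper's actual route is different and non-circular. First, Lemma~\ref{lemma:bigmod} shows via Fujita approximation and Siu's inequality that for $j\gg 0$ the model $\mathbb{Q}$-divisor $D_j-q_jD_0$ is already big on the projective model $X_j$. Then Lemma~\ref{lemma:injectamp} applies the classical Kodaira lemma on $X_j$ to produce, for any ample $\overline{A}$ on $X_j$ and all $m\gg 0$, a nonzero section in $H^0(U, m\overline{D}-\overline{A})$; the effectivity $D_j-q_jD_0\le \overline{D}$ transports this from the projective model to the adelic space of sections. Tensoring these Kodaira sections with very ample sections $\overline{s_i}$ on $X_j$ having prescribed valuation vectors $e_0,\dots,e_d$ yields elements of $\Gamma(\overline{D})$ at lattice positions $(f_0,m_0)$, $(f_0+e_i,m_0)$, $(f_1,m_0+1)$, which together generate $\mathbb{Z}^{d+1}$. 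This is the key input your proposal is missing; the ample-perturbation-and-pass-to-the-limit device does not substitute for it.
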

Continuing with our analogy of the approach in the article \cite{lazarsfeld2008convex} we construct global bodies for adelic Okounkov bodies to study the variation of these bodies. Although we do not have finiteness of the  Néron–Severi space associated to adelic divisors, it turns out there exist a canonical global convex body whose fibers give Okounkov bodies even if this global body depends on some choices of divisors in contrast to in \cite{lazarsfeld2008convex}. This is the content of our next theorem
\begin{theorem*}[B]
Suppose $\overline{D}$ and $\overline{E}$ be adelic divisors on a normal quasi-projective variety $U$ such that $\overline{D}$ is big. Then there exists a convex body $\Delta(U)=\Delta(U,\overline{D},\overline{E})\subset \mathbb{R}^{d+2}$ with the property that for any $\vec{a}=(a_1,a_2)\in\mathbb{Q}^2$ with $a_1\overline{D}+a_2\overline{E}$ big, we have \[\Delta(a_1\overline{D}+a_2\overline{E})=\Delta(U)\cap(\mathbb{R}^d\times\{\vec{a}\})\]
\end{theorem*}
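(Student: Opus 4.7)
The plan is to adapt the construction of the global Okounkov body from Theorem 4.5 of \cite{lazarsfeld2008convex} to the adelic setting, restricted to the two-dimensional $\mathbb{Q}$-subspace of adelic divisors spanned by $\overline{D}$ and $\overline{E}$ (which plays the role of the finite-dimensional Néron--Severi space in the projective case). I would first fix an admissible flag $Y_\bullet : X \supset Y_1 \supset \cdots \supset Y_d$ on a common smooth projective model $X$ of $U$, chosen so that $Y_d$ lies in $U$. This yields a rank-$d$ valuation $\nu : K(U)^\times \to \mathbb{Z}^d$ which is exactly the valuation used to construct each individual Okounkov body $\Delta(a_1\overline{D} + a_2\overline{E})$.

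Next I would form the bi-graded semigroup
\[
\Gamma(\overline{D}, \overline{E}) := \bigl\{ (\nu(s), m_1, m_2) \in \mathbb{N}^{d+2} : (m_1, m_2) \in \mathbb{N}^2,\ 0 \neq s \in H^0(U, m_1\overline{D} + m_2\overline{E}) \bigr\}.
\]
That this is a sub-semigroup of $\mathbb{N}^{d+2}$ follows from the multiplicative structure of adelic sections together with the additivity $\nu(st) = \nu(s) + \nu(t)$. I would then set $\Delta(U) := \Sigma(\Gamma)$ to be the closed convex cone in $\mathbb{R}^{d+2}_{\geq 0}$ generated by $\Gamma$; this is the global body (interpreted in the generalized sense of a closed convex cone, as in \cite{lazarsfeld2008convex}). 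Note that in contrast to the projective situation, $\Delta(U)$ genuinely depends on the pair $(\overline{D}, \overline{E})$, since there is no canonical finite-dimensional space of adelic classes in which to work.

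The substance of the theorem is the slicing property: for any $\vec{a} = (a_1, a_2) \in \mathbb{Q}^2$ with $F := a_1\overline{D} + a_2\overline{E}$ big, one has $\Delta(U) \cap (\mathbb{R}^d \times \{\vec{a}\}) = \Delta(F) \times \{\vec{a}\}$. The inclusion $\supseteq$ is straightforward: choosing $k$ so that $ka_1, ka_2 \in \mathbb{N}$, any section $s \in H^0(U, mkF)$ contributes a point $(\nu(s), mka_1, mka_2) \in \Gamma$ whose scaling by $1/(mk)$ lies in the fiber over $\vec{a}$, and by construction the closure of these points as $m$ varies recovers $\Delta(F) \times \{\vec{a}\}$.

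The reverse inclusion $\subseteq$ is the main obstacle. A generic point in the fiber arises as a limit of non-negative rational combinations $\sum_i \lambda_i (\nu(s_i), m_{1,i}, m_{2,i})$ with the grading coordinates summing in the limit to $\vec{a}$. Writing $\lambda_i = \mu_i / M$ with $\mu_i, M \in \mathbb{N}$ and forming the product $\prod_i s_i^{\mu_i}$ yields, when the integer equality $\sum_i \mu_i m_{j,i} = M a_j$ holds exactly, a section in $H^0(U, MF)$ whose scaled valuation equals $\sum_i \lambda_i \nu(s_i)$. Since this exact equality cannot be enforced in general, the argument requires an approximation scheme modeled on Proposition 2.1 of \cite{lazarsfeld2008convex}, applied to the sub-semigroup of $\Gamma$ lying over the ray through $\vec{a}$. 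The bigness hypothesis on $F$ is crucial: together with Theorem A it guarantees that $\dim_K H^0(U, MF)$ grows like $M^d$, so that the relevant slice of $\Sigma(\Gamma)$ is full-dimensional and the approximation recovers the entire Okounkov body $\Delta(F)$ rather than a proper subset. I expect this Khovanskii-type approximation argument to be the delicate technical core of the proof.
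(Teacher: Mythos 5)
Your overall construction is correct and identical to the paper's: form the bi-graded semi-group $\Gamma(U)$ from valuations of sections of $H^0(U, m_1\overline{D}+m_2\overline{E})$, take its closed convex cone $\Delta(U)=\Sigma(\Gamma(U))\subset\mathbb{R}^{d+2}$, and invoke the convex-geometric machinery of Lazarsfeld--Musta\c{t}\u{a} to get the slicing property. The easy direction $\Sigma(\Gamma_{\mathbb{N}\vec{a}})\subseteq\Delta(U)_{\mathbb{R}\vec{a}}$ is as you describe.

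The genuine gap is in your treatment of the hard direction. You correctly flag the approximation argument as the delicate core, but you misidentify the hypothesis that makes it work. It is not enough that $\dim_K H^0(U,MF)$ grows like $M^d$ (that only tells you the slice over the ray through $\vec{a}$ is $d$-dimensional); the decisive input to Proposition 4.9 of \cite{lazarsfeld2008convex} is that $\vec{a}$ lies in the \emph{interior} of the support $\mathrm{Supp}(\Delta(U))\subset\mathbb{R}^2$. Without this two-dimensional openness, the fiber $\Delta(U)_{\mathbb{R}\vec{a}}$ can pick up contributions as limits of directions transverse to $\mathbb{R}\vec{a}$ and be strictly larger than $\Sigma(\Gamma_{\mathbb{N}\vec{a}})$. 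The paper devotes a separate lemma to establishing that bigness of $a_1\overline{D}+a_2\overline{E}$ forces $\vec{a}\in\mathrm{int}(\mathrm{Supp}(\Delta(U)))$; the argument uses the Cauchy condition on models, continuity of projective volumes, and the fact that big adelic divisors remain big under small perturbation. You give no argument for this, and it does not follow formally from Theorem A.

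There is a second, smaller omission: to apply the Lazarsfeld--Musta\c{t}\u{a} result at all one must also know that $\Gamma(U)$ generates a finite-index subgroup of $\mathbb{Z}^{d+2}$ (otherwise $\mathrm{Supp}(\Delta(U))$ could have empty interior to begin with). The paper handles this by picking an integer $m$ with $m\overline{D}-\overline{E}$ big and feeding the single-bundle result (that $\Gamma(\overline{D})$ and $\Gamma(m\overline{D}-\overline{E})$ each generate rank-$(d+1)$ sub-lattices) into the bi-graded picture. This step is also absent from your proposal. Once these two lemmas are in place, the slicing identity follows immediately by the cited convex-geometric result; as written, your argument leaves both of them unproved.
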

The above two theorems combine to prove that Theorem 5.2.1 of \cite{yuan2021adelic} for big adelic divisors using convex geometric methods and Okounkov bodies. This volume essentially measures the asymptotic growth of the global sections which arise as restrictions of global sections from the bigger variety just as in the classical projective case ( Furthermore they show that not only the volume of the big adelic divisor but also its Okounkov body constructed in this article is approximated (in terms of Hausdorff metric) by the corresponding Okounkov bodies of the projective models defining the divisor.\\
Next we go on to define the notions of restricted volumes of adelic divisors along a closed sub-variety $E$ of $U$ using Okounkov bodies. The restricted volume essentially measures the asymptotic growth of global sections of $\overline{D}|_E$ which arise as restrictions of sections of $\overline{D}$ over $U$ to $E$ analogously to the classical projective setting (see \cite{restrvoldef} for more details). Analogously to the projective case, we can form the convex geometric objects $\Gamma_{U|E}(\overline{D})$, $\Delta_{U|E}(\overline{D})$ and the algebraic objects $H^0(U|E,\overline{D})$, $\widehat{\text{vol}}_{U|E}(\overline{D})$ for a given adelic divisor $\overline{D}$. In order to have relations analogous to that of the adelic volume, we introduce the notion of {augmented base locus} of an adelic divisor in analogy with projective augmented base locus (see section 2.4, \cite{lazarsfeld2008convex}). Our definition, although being very similar to the projective case, requires some work to be shown well-defined. Since we do not have Serre finiteness on quasi-projective varieties, we have to use the main result of \cite{birkar} to show the well-definedness. We go on to show that when $E$ is not contained in the augmented base locus, the expected properties hold which is our next theorem
\begin{theorem*}[C]
Suppose $\overline{D}$ is an adelic divisor on a normal quasi-projective variety $U$ over $K$. Furthermore suppose $E$ is a closed irreducible sub-variety of $U$ not contained in the augmented base locus of $\overline{D}$. Then we have 
\[\emph{vol}_{\mathbb{R}^d}(\Delta_{U|E}(\overline{D}))=\lim_{m\to\infty}\frac{\emph{dim}_K(H^0(U|E,O_E(m\overline{D})))}{m^d}=\frac{1}{d!}\cdot \widehat{\emph{vol}}_{U|E}(\overline{D})\]
where $\emph{dim}(E)=d$
\end{theorem*}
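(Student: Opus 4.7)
The plan is to emulate the restricted Okounkov body argument of Lazarsfeld--Mustață, replacing projective Serre-vanishing inputs by approximation via the projective models that define the adelic divisor, and combining this with the restricted volume Theorem of \cite{lazarsfeld2008convex} applied to these models. We separate the two equalities in the statement: first, $\text{vol}_{\R^d}(\Delta_{U|E}(\overline{D}))=\lim_{m\to\infty}\dim_K(H^0(U|E,O_E(m\overline{D})))/m^d$, established by the Khovanskii--Okounkov theorem on semigroups of integer points; second, this limit equals $\frac{1}{d!}\widehat{\text{vol}}_{U|E}(\overline{D})$, established by comparison with projective models $(X_i,D_i)$.

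For the first equality, fix an admissible flag $E=Y_0\supset Y_1\supset\cdots\supset Y_d=\{\text{pt}\}$ on $E$, which induces a rank-$d$ valuation $\nu_E$ on sections on $E$. For a nonzero $s\in H^0(U,m\overline{D})$ whose restriction to $E$ does not vanish, set $\nu(s)=\nu_E(s|_E)$; the resulting image semigroup is $\Gamma_{U|E}(\overline{D})\subset \N^{d+1}$, and $\Delta_{U|E}(\overline{D})$ is its level-$1$ slice of the closed convex cone it spans. The number of lattice points of $\Gamma_{U|E}(\overline{D})$ at level $m$ equals $\dim_K H^0(U|E,O_E(m\overline{D}))$ by the injectivity of $\nu_E$. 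Khovanskii's theorem on semigroups then gives the desired asymptotic identification with $\text{vol}_{\R^d}(\Delta_{U|E}(\overline{D}))$, provided the group generated by $\Gamma_{U|E}(\overline{D})$ has full rank $d+1$. This rank condition is exactly where the hypothesis $E\not\subset\mathbf{B}_+(\overline{D})$ enters: writing $\overline{D}=(\overline{D}-\varepsilon\overline{A})+\varepsilon\overline{A}$ for an ample adelic divisor $\overline{A}$ and sufficiently small $\varepsilon\in\Q_{>0}$, the non-containment condition ensures that we can produce, for large $m$, sections of $m\overline{D}$ whose restrictions to $E$ have valuations spanning a finite-index subgroup of $\Z^d$.

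For the second equality, unwind the definitions: the sequence of projective models $\{X_i,D_i\}$ defining $\overline{D}$ gives projective restricted volumes $\text{vol}_{X_i|\tilde{Y}_i}(D_i)$, where $\tilde{Y}_i$ is the closure of $E$ in $X_i$ (which we may take to dominate $Y_i$ by replacing the flag, using non-containment in $\mathbf{B}_+$ to guarantee that the restriction is meaningful on the models too). By the projective theorem of Lazarsfeld--Mustață, each is $d!\cdot\text{vol}_{\R^d}(\Delta_{X_i|\tilde Y_i}(D_i))$. The Cauchy condition defining adelic divisors, together with the sandwich argument used in the proofs of Theorems~A and~B, shows that $\Delta_{X_i|\tilde Y_i}(D_i)$ converges (in Hausdorff metric) to $\Delta_{U|E}(\overline{D})$, and hence their Lebesgue volumes converge to $\text{vol}_{\R^d}(\Delta_{U|E}(\overline{D}))$. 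On the other hand, by definition the restricted adelic volume $\widehat{\text{vol}}_{U|E}(\overline{D})$ is the limit of the $\text{vol}_{X_i|\tilde Y_i}(D_i)$, and these two limits must agree.

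The main obstacle is the rank-generation claim in the first equality. In the projective setting one produces sections with prescribed leading valuation via Serre vanishing on blow-ups along the flag, which is unavailable on a quasi-projective $U$. The workaround is to pass to a projective model $X_i$ on which Serre vanishing is available and on which the strict transform of $E$ is still not contained in the augmented base locus of $D_i$ (this uses the adelic definition of $\mathbf{B}_+(\overline{D})$, whose well-definedness rests on \cite{birkar}); one then lifts the resulting sections back to $U$ as adelic sections for $m\overline{D}$ via the trivial inclusion $H^0(X_i,mD_i)\subseteq H^0(U,m\overline{D})$, paying a small error absorbed by the Cauchy tails. Coordinating the flags on $X_i$ with the chosen flag on $E$ so that the valuations remain consistent under these comparisons is the delicate bookkeeping step.
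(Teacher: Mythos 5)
Your handling of the first equality is essentially the paper's own approach: show that the graded semigroup $\Gamma_{U|E}(\overline{D})$ satisfies Khovanskii-type conditions (in particular that it generates a full-rank subgroup of $\mathbb{Z}^{d+1}$), using the hypothesis $E\nsubseteq B_+(\overline{D})$ to manufacture sections with prescribed valuation vectors after passing to a suitable projective model, and then invoke the Okounkov--Khovanskii limit theorem. This is exactly Lemma~\ref{lemma:restrfingen} (built on Corollary~\ref{corol:consec}) followed by the argument of Theorem~\ref{thm:okoun}.

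However, your treatment of the second equality contains a genuine gap: you assert that ``by definition the restricted adelic volume $\widehat{\text{vol}}_{U|E}(\overline{D})$ is the limit of the $\text{vol}_{X_i|\tilde{Y}_i}(D_i)$.'' That is not the paper's definition. Definition~\ref{def:restrvol} sets
\[
\widehat{\text{vol}}_{U|E}(\overline{D})=\limsup_{m\to\infty}\frac{\dim_K H^0(U|E,O_E(m\overline{D}))}{m^d/d!}\,,
\]
a limsup over $m$ of dimensions, not a limit over models. With the correct definition, the second equality requires no extra argument: once the first equality establishes that $\lim_m \dim_K H^0(U|E,O_E(m\overline{D}))/m^d$ exists, the limsup is a limit and the two sides differ only by $d!$. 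The comparison to projective restricted volumes that you describe is the content of Corollary~\ref{corol:restrcoonvbod}, but in the paper this is a consequence of the theorem, proved afterwards via the global restricted body (Theorem~\ref{theorem:restrglobalbody}) and Hausdorff-metric continuity, and its proof explicitly invokes Theorem~\ref{theorem:restrvol}. Using it as a lemma here would therefore be circular. A secondary imprecision: you invoke ``an ample adelic divisor $\overline{A}$,'' but the paper deliberately avoids any notion of adelic ampleness (see the remark after Lemma~\ref{lemma:boundadel}); the relevant object is an ample divisor on a single projective model, and the non-containment $E\nsubseteq B_+(\overline{D})$ is formulated via such model ample divisors, with well-definedness secured by \cite{birkar} through Lemma~\ref{lemma:modfree}.
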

We go on to show the existence of a global body even for restricted volumes and hence the variation of these restricted Okounkov bodies also has desirable satisfying properties like continuity etc.\\ \ \\
The organisation of the article is as follows. In the first three sections of the first chapter we review the notions of adelic divisors and their space of global sections following sub-section 2.4 in \cite{yuan2021adelic}. In the fourth section we construct the Okounkov bodies for adelic divisors and show some preleminary properties of them. In the fifth section we prove our first main theorem relating the algebraic adelic volumes with Euclidean volumes of their Okounkov bodies. In the next two sections we construct the global bodies and show that their fibers essentially gives the variation of Okounkov bodies in fixed directions. We also deduce certain corollaries of the existence of global bodies. In the first section of the second chapter we define augmented base locus of an adelic divisor. We go on to define the restricted volume of an adelic divisor along a closed sub-variety in the next section. We relate them to Euclidean volumes of restricted Okounkov bodies and show the existence of global bodies in analogy to the adelic volume in the next two sections. We end the chapter by obtaining certain corollaries of restricted volumes similar to those of ordinary volumes in chapter 1.
\subsection{Adelic divisors}
We begin by giving a short review of adelic divisors which are our main objects of interest in this article. We fix a quasi-projective varity $U$ over any field $k$. By a \emph{projective model} of $U$, we mean a projective variety $X$ over $k$ which contains $U$ as an open dense subset via an open immersion $U\xhookrightarrow{} X$. Given a projective model $X$ of $U$, we have the group of Cartier $\mathbb{Q}$-divisors denoted by $\text{Div}(X)_{\mathbb{Q}}=\text{Div}(X)\otimes_{\mathbb{Z}}\mathbb{Q}$. Then we consider the group of $(\mathbb{Q},\mathbb{Z})$-divisors $\text{Div}(X,U)$ as follows
\[\text{Div}(X,U)=\{(D,\mathcal{D})\in\text{Div}(U)\oplus \text{Div}(X)_{\mathbb{Q}}\mid \mathcal{D}|_U=D\ \text{in}\ \text{Div}(U)_{\mathbb{Q}}\}\]
where $\mathcal{D}|_U$ denotes the image of $\mathcal{D}$ under the pull-back morphism $\text{Div}(X)_{\mathbb{Q}}\rightarrow \text{Div}(U)_{\mathbb{Q}}$.\\ \ \\
Note that the set of \emph{all} projective models of a given $U$ for an inverse system which in turn makes the set of $(\mathbb{Q},\mathbb{Z})$-divisors into a directed system via pull-backs. Then we can form the direct limit to define the group of \emph{model divisors} as follows 
\[\text{Div}(U/k)_{\text{mod}}=\lim_{X}\text{Div}(X,U)\]
where above the direct limit is taken as $X$ varies over all projective models of $U$.
Next note that there is a notion of effectivity in both the groups $\text{Div}(X)_{\mathbb{Q}}$ and $\text{Div}(U)$ which induces a partial order on $\text{Div}(X,U)$ where $(D,\mathcal{D})\le (D',\mathcal{D}')$ if and only if both $D'-D$ and $\mathcal{D}'-\mathcal{D}$ are effective in $\text{Div}(U)$ and $\text{Div}(X)_{\mathbb{Q}}$ respectively. This partial order induces a partial order in $\text{Div}(U/k)_{\text{mod}}$ by passing to direct limits.\\
By a \emph{boundary divisor} of $U$ over $k$, we mean a tuple $(X_0,D_0)$ where $X_0$ is a projective model of $U$ and $D_0$ is an effective Cartier divisor on $X_0$ with $\text{Supp}(D_0)=X_0-U$. Note that such a boundary divisor always exists which can be seen by choosing any projective model $X_0'$ of $U$ and blowing-up $X_0'$ along the reduced center $X_0-U$. Then note for any non-zero rational $r\in\mathbb{Q}$ we can view $rD_0$ as an element of $\text{Div}(X_0,U)$ and hence as en element of $\text{Div}(U/k)$ by setting the component on $\text{Div}(U)$ to be 0.\\
We can finally put a norm, denoted by a \emph{boundary norm} on $\text{Div}(U/k)_{\text{mod}}$ as follows 
\[||\cdot||_{D_0}\colon \text{Div}(U/k)_{\text{mod}}\rightarrow [0,\infty]\]
\[||\overline{D}||_{D_0}=\text{inf}\{q\in \mathbb{Q}_{>0}\mid -qD_0\le \overline{D}\le qD_0\ \text{in}\ \text{Div}(U/k)_{\text{mod}}\}\]
It is shown Lemma 2.4.1 of \cite{yuan2021adelic} that $||\cdot||_{D_0}$ is actually a norm and the topology induced by it on $\text{Div}(U/k)_{\text{mod}}$ is independent of the chosen boundary divisor $(X_0,D_0)$. Hence we can talk about the \emph{boundary topology} on $\text{Div}(U/k)_{\text{mod}}$ as the topology induced by a boundary norm coming from \emph{any} boundary divisor. We finally define the \emph{adelic divisors}, denoted by $\widehat{\text{Div}}(U,K)$, as the completion of the topological space $\text{Div}(U/k)_{\text{mod}}$ with respect to the boundary topology described above. Note that then by an adelic divisor we mean the data $\{X_i,D_i\}$ where $X_i$ are projective models and $D_i\in \text{Div}(X_i,U)$ and a sequence of positive rationals $\{q_i\}$ converging to 0  satisfying the effectivity relations
\[-q_iD_0\le D_i-D_j\le q_iD_0\ \text{in}\ \text{Div}(U/k)_{\text{mod}}\ \text{for all}\ j\ge i\]
\begin{remark}
If we assume $U$ to be normal, we can choose the models $X_i$ to be normal and further embedding the group of Cartier divisors into Weil divisors we can look at $D_i$ just as elements of $\text{Div}(X_i)_{\mathbb{Q}}$ and the effectivity relation to be holding in just $\text{Div}(X_i)_{\mathbb{Q}}$ instead of $\text{Div}(U/k)_{\text{mod}}$. This is due to the fact that group of Weil divisors on $U$ has no torsion.
\end{remark}
\subsection{Space of global sections of an adelic divisor}
We fix an algebraically closed field $K$ and a normal quasi-projective variety $U$ over $K$. As described in the previous section, we have the notion of the group of \emph{adelic divisors}  $\widehat{\Div}(U,K)$ ( \cite{yuan2021adelic} sub-section 2.4.1 for more details) which are given by a compatible sequence of models $\{X_i,D_i\}$ such that $D_i$ are Cartier $\mathbb{Q}$-divisors on the projective models $X_i$ such that $D_i$ restrict to a Cartier divisor $D$ on $U$ and they satisfy the Cauchy condition with respect to a boundary divisor $D_0$ defined over a projective model $X_0$ \textit{i.e} there exists a sequence of positive rational numbers $\{q_j\}$ converging to 0 such that 
\begin{equation}
    \label{eq:cauchy}
    D_j-q_jD_0\le D_i\le D_j+q_jD_0\ \text{for all}\ i\ge j\,.
\end{equation}
where $D_0$ is an effective Cartier divisor on $X_0$ with support exactly equal to the complement of $U$ in $X_0$ and the above effectivity relations are considered in a common model.( for details see \cite{yuan2021adelic}, section 2) 
Note that the definition of adelic divisor does not depend on the particular choice of the boundary divisor $D_0$, as shown in \cite[Lemma~2.4.1]{yuan2021adelic}.
We denote this data by $\overline{D}$.
Given such an adelic divisor, we introduce the space of global sections
\[
H^0(U,\overline{D})=H^0(U,O(\overline{D}))=\{f\in\kappa(U)^\times\mid \text{div}(f)+\overline{D}\ge 0\}\cup\{0\}
\]
following \cite[section~5.1.2]{yuan2021adelic}.
In the above definition, $\text{div}(f)$ is the adelic divisor obtained by picking the divisor corresponding to $f\in\kappa(U)^{\times}=\kappa(X)^{\times}$ on any projective model $X$ of $U$, and $\text{div}(f)+\overline{D}\ge 0$ means that the left hand side can be represented by a sequence of effective divisors on the corresponding models.
\begin{remark}
It is shown in \cite[Lemma~5.1.7(2)]{yuan2021adelic} that this space is always finite dimensional. This will be our analogue for the usual space of global sections on which we construct Okounkov bodies. For this purpose, note that by restricting the effectivity relation $\text{div}(f)+\overline{D}\ge 0$ to $U$, we can identify $H^0(U,\overline{D})$ with a finite dimensional vector sub-space of the space of all sections $H^0(U,O(D))$( which in general is very large and infinite dimensional). This will always be our way of viewing the vector spaces $H^0(U,\overline{D})$.
\end{remark}

\subsection{Different notions of effective sections}
Note that $D_i$ can be viewed as a (model) adelic divisor $\overline{D_i}$ in $\widehat{\text{Div}}(U,K)$ and consequently we have the space of global sections $H^0(U,\overline{D_i})$ as before, where we put the overline to emphasize it is viewed as a model adelic divisor.
However viewing $D_i$ as a $\mathbb{Q}$-divisor on the projective variety $X_i$ we can also define the space of global sections as before
\[
H^0(X_i,D_i)^\prime=\{f\in\kappa(X_i)^\times\mid \text{div}(f)+D_i\ge 0\ \text{in}\ \text{Div}(X_i)_{\mathbb{Q}} \}\cup\{0\}\,.
\]
only by restricting our attention to the projective model $X_i$.
These two notions of effective sections can be different a-priori. However if we consider $U$ to be normal, then by \cite[Lemma 5.1.5 and Remark~5.1.6]{yuan2021adelic} they are canonically identified and we get that both these notions are the same.
Next we will obtain some inclusions.

\begin{lemma}
\label{lemma:incl}
We have the sequence of inclusions 
\[
H^0(X_j,k(D_j-q_jD_0))^\prime\hookrightarrow\ H^0(U,O(k\overline{D}))\hookrightarrow H^0(X_j,k(D_j+q_jD_0))^\prime
\]
for all $k\in \mathbb{N}$ and for all $j$.
\end{lemma}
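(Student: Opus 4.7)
The plan is to derive both inclusions from an adelic upgrade of the Cauchy condition, namely $k(D_j - q_jD_0) \le k\overline{D} \le k(D_j + q_jD_0)$ as adelic divisors, combined with the identification between the adelic and the model notions of effectivity for model divisors provided by \cite[Lemma~5.1.5 and Remark~5.1.6]{yuan2021adelic}, which is valid since $U$ is assumed normal.

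First I would pass to the limit $i \to \infty$ in the Cauchy inequalities $D_j - q_jD_0 \le D_i \le D_j + q_jD_0$ (valid for $i \ge j$). Since $\overline{D}$ is by definition the limit of the $D_i$ in the boundary topology, and the cone of effective adelic divisors is the closure of the cone of effective model divisors, this yields $D_j - q_jD_0 \le \overline{D} \le D_j + q_jD_0$ in $\widehat{\Div}(U,K)$. Multiplying by $k \in \N$ (which preserves the partial order) and adding $\text{div}(f)$ for an arbitrary $f \in \kappa(U)^\times$ gives
\[
\text{div}(f) + k(D_j - q_jD_0) \,\le\, \text{div}(f) + k\overline{D} \,\le\, \text{div}(f) + k(D_j + q_jD_0).
\]

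For the left-hand inclusion, if $f \in H^0(X_j, k(D_j - q_jD_0))^\prime$, then $\text{div}(f) + k(D_j - q_jD_0) \ge 0$ holds on the projective model $X_j$, and hence also as a model adelic divisor (constant sequence). Combined with the leftmost adelic inequality above, this forces $\text{div}(f) + k\overline{D} \ge 0$, so that $f \in H^0(U, O(k\overline{D}))$.

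For the right-hand inclusion the situation is less immediate, and this is where I expect the main obstacle to be: from $\text{div}(f) + k\overline{D} \ge 0$ as an adelic divisor I only obtain $\text{div}(f) + k(D_j + q_jD_0) \ge 0$ \emph{as an adelic divisor}, and one must then convert this back to effectivity on $X_j$ itself. This conversion is precisely what \cite[Lemma~5.1.5 and Remark~5.1.6]{yuan2021adelic} supplies in the normal setting: the space $H^0(U, \overline{D_j})$ coincides canonically with $H^0(X_j, D_j)^\prime$, so that effectivity of a model divisor as an adelic divisor is equivalent to its effectivity on the underlying projective model. Applying this to $\text{div}(f) + k(D_j + q_jD_0)$ delivers the desired membership $f \in H^0(X_j, k(D_j + q_jD_0))^\prime$ and completes the argument.
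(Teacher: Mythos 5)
Your proof is correct and follows essentially the same route as the paper's: pass to the limit in the Cauchy condition to obtain $\overline{D}_j - q_j\overline{D}_0 \le \overline{D} \le \overline{D}_j + q_j\overline{D}_0$, then use the normality hypothesis via \cite[Lemma~5.1.5, Remark~5.1.6]{yuan2021adelic} to identify model effectivity with adelic effectivity. The only cosmetic difference is that you note the left inclusion needs only the trivial direction of this identification, whereas the paper invokes it symmetrically for both sides.
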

\begin{proof}
Note that by our discussion above the two extremes of the sequence can be replaced by
\[
H^0(U,k(\overline{D_j}-q_j\overline{D_0}))\quad\text{and}\quad H^0(U,k(\overline{D_j}+q_j\overline{D_0}))
\]
respectively as $U$ is assumed to be normal.
Therefore, the statement is equivalent to the chain of inequalities
\[
\overline{D}_j-q_j\overline{D}_0\leq\overline{D}\leq\overline{D}_j+q_j\overline{D}_0\,,
\]
which is an immediate consequence of~\eqref{eq:cauchy}.
\end{proof}

Next we define the volume of an adelic line bundle following \cite[sub-section~5.2.2]{yuan2021adelic}.
\begin{definition}
\label{def:vol}
Given an adelic line bunde $\overline{D}$ on a quasi-projective variety $U$ as above, we define the \emph{volume of $\overline{D}$} as 
\[
\widehat{\emph{vol}}(\overline{D})=\limsup_{m\to\infty}\frac{\dim_K(\widehat{H}^0(U,m\overline{D}))}{m^d/d!}\,,
\]
where $d$ is the dimension of $U$. We call an adelic divisor \emph{big} if $\widehat{\emph{vol}}(\overline{D})>0$.
\end{definition}
We will primarily be interested in the Okounkov bodies of the big adelic divisors.
\begin{remark}
It is shown in \cite[Theorem~5.2.1(1)]{yuan2021adelic} that the $\limsup$ in Definition~\ref{def:vol} is actually a limit by using the fact that the volume is actually a limit of the volumes of the projective $\mathbb{Q}$-volumes of thze models. However we will not assume that here and we will use the theory of Okounkov bodies to independently show that this volume is given by a limit.
\end{remark}
\subsection{Okounkov bodies for adelic divisors}
 We recall the valuation function crucial in the definition of Okounkov bodies. Note that as we remarked at the end of section 1.1, every element of $\overline{H^0}(U,O(\overline{D}))$ can be identified as a global section of $O(D)$ on  $U$ by restricting the effectivity relation $\text{div}(f)+\overline{D}\ge 0$ to $U$. Now we fix a closed regular point $x\in U(K)$ and consider any local trivialisation $s_0$ of $O(D)$ around $x$. Then every element $s\in H^0(U,\overline{D})\subseteq H^0(U,O(D))$ induces a regular function by $f=\frac{s}{s_0}$ around $x$ and hence an element in the completion $\widehat{O_{U,x}}\cong K[[x_1\ldots x_d]]$ where $d$ is the dimension of $U$ and the second congruence follows from the regularity of $x$. Then we define a valuation like function denoted by $\text{ord}$ as follows:
$$\nu_x(f)=\text{min}\{\alpha\in\mathbb{N}^d\mid f=\sum a_{\alpha}\text{x}^{\alpha}\ \text{in}\ \widehat{O_{U,x}},\ a_{\alpha}\neq 0\}$$
where the minimum is taken with respect to the lexicographic order on the variables $x_1\ldots x_d$ and this function is independent of the choice of $s_0$. Now the choice of a flag $x=Y_0\subset Y_1\subset\ldots Y_d=U$ centered at $x$ gives a choice of variables $x_1\ldots x_d$ as above and hence yields a valuation function $\nu_x$ on $\overline{H^0}(U,\overline{D})$. Note that the sub-spaces $\overline{H^0}(U,m\overline{D})$ are finite dimensional and induces a graded linear series $\{V_m\subseteq H(U,mD)\}$ in the sense of section 1.3 of \cite{lazarsfeld2008convex}. Hence we can define the semi-groups and convex bodies similarly
\begin{definition}
\label{def:okounkov}
Suppose we have the adelic divisor $\overline{D}$. Then we can define the semi-group 
\[\Gamma(\overline{D})=\{(\alpha,m)\in \mathbb{N}^{d+1}\mid \alpha=\nu_x(s)\ \text{for some}\ s\in\overline{H^0}(U,m\overline{D})\}\]
We further define $\Gamma(\overline{D})_m=\Gamma(\overline{D})\cap (\mathbb{N}^d\times \{m\})$. Finally we define the associated \emph{Okounkov body} of $\overline{D}$ as
\[\Delta(\overline{D})=\text{closed convex hull}(\cup_m\frac{1}{m}\cdot \Gamma(\overline{D})_m)=\Sigma(\Gamma(\overline{D}))\cap(\mathbb{R}^d\times\{1\})\]
where $\Sigma(\cdot)$ denotes taking the closed convex cone in the ambient Euclidean space.
\end{definition}

We are going to derive required properties of $\Gamma(\overline{D})$ and $\Delta(\overline{D})$ with the goal of relating its volume to the volume of adelic line bundles.
We begin by showing that eventually the models are big when perturbed a little by the boundary divisor $D_0$ provided $\overline{D}$ is big. This is immediate if we assume Proposition 5.2.1 of \cite{yuan2021adelic}. However even without the full strength of the result, we have the following lemma:
\begin{lemma}
\label{lemma:bigmod}
Suppose $\overline{D}$ is a big adelic divisor given by models $\{X_i,D_i\}$ as above with boundary divisor $D_0$. Then for $j>>0$, $D_j-q_jD_0$ (and hence $D_j$) is a big $\mathbb{Q}$-divisor on $X_j$. In particular, we deduce that there exists a $r_0$ such that $H^0(U,r\overline{D})\neq \{0\}$ for all $r>r_0$.
\end{lemma}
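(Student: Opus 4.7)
The two conclusions of the lemma reduce to the core claim that $D_j - q_j D_0$ is a big $\mathbb{Q}$-divisor on $X_j$ for $j \gg 0$. Granted this, $D_j = (D_j - q_j D_0) + q_j D_0$ is big as big plus effective. Moreover, bigness of $D_j - q_j D_0$ on $X_j$ implies $H^0(X_j, r(D_j - q_j D_0))' \neq 0$ for all sufficiently large integers $r$ by a standard Kodaira-type argument for big $\mathbb{Q}$-divisors, and Lemma~\ref{lemma:incl} then embeds these sections into $H^0(U, r \overline{D})$, producing the required $r_0$.

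My plan is to establish bigness of $D_j - q_j D_0$ via a projective comparison of section spaces on $X_j$. First, by Lemma~\ref{lemma:incl}, the injection $H^0(U, k \overline{D}) \hookrightarrow H^0(X_j, k(D_j + q_j D_0))'$ combined with $\widehat{\text{vol}}(\overline{D}) > 0$ yields
\[
\text{vol}_{X_j}(D_j + q_j D_0) \;\ge\; \widehat{\text{vol}}(\overline{D}) \;>\; 0,
\]
so $D_j + q_j D_0$ is big on $X_j$ for every $j$, with a uniform lower volume bound. To transfer bigness to $D_j - q_j D_0$, I would fix a defining global section $s_0$ of $D_0$ and note that multiplication by $s_0^{2kq_j}$ embeds $H^0(X_j, k(D_j - q_j D_0))'$ into $H^0(X_j, k(D_j + q_j D_0))'$ as the subspace of sections vanishing along $D_0$ to order at least $2kq_j$. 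Filtering the cokernel by order of vanishing along $D_0$ and restricting each graded piece injects it into $H^0(D_0, \mathcal{O}_{D_0}(kD_j|_{D_0} + (kq_j - l)D_0|_{D_0}))$ for $l = 0, \ldots, 2kq_j - 1$, each of dimension $O(k^{d-1})$ in $k$. Summing over the $2kq_j$ filtration levels gives
\[
\dim H^0(X_j, k(D_j + q_j D_0))' \;-\; \dim H^0(X_j, k(D_j - q_j D_0))' \;\le\; C \cdot q_j \cdot k^d,
\]
whence $\text{vol}_{X_j}(D_j - q_j D_0) \ge \widehat{\text{vol}}(\overline{D}) - C' q_j$, which is positive for $j$ large since $q_j \to 0$.

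The main obstacle is controlling the constant $C$ uniformly in $j$, since the graded quotients live on $D_0$ with line bundles depending on $D_j$, and a priori $C$ could blow up as $j$ varies. I would resolve this by fixing some $j_0$ and invoking the Cauchy condition on a common model to get $D_j + q_j D_0 \le D_{j_0} + 2q_{j_0} D_0$ for all $j \ge j_0$, which dominates the relevant restrictions to $D_0$ by a single fixed divisor and so caps the dimensions of the graded pieces independently of $j$. The remaining bookkeeping -- integrality of $kq_j$ along a cofinal subsequence of $k$, extraction of the limsup, and the standard Kodaira-type step for the last statement of the lemma -- is routine once this uniform bound is in place.
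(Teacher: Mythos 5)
Your approach is genuinely different from the paper's. Where you filter the comparison $H^0(X_j, k(D_j+q_jD_0))' \supseteq H^0(X_j,k(D_j-q_jD_0))'$ by order of vanishing along $D_0$ and try to bound each graded quotient by cohomology on $D_0$, the paper instead invokes Fujita approximation to replace $D_j+q_jD_0$ by an ample $A_j$ on a modification, decomposes $D_0=A-B$ with $A,B$ nef, and applies Siu's inequality; the crucial uniform constant is then $N^{d-1}A$ for a single nef model divisor $N$ dominating all the $A_j+2q_j\pi_j^*B$. The paper's route is deliberate: all the quantities involved (volumes and top intersection products of nef divisors) are birational invariants, so the uniformity reduces to producing one dominating nef divisor, which is handled by Serre finiteness on the fixed model $X_0$.

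Your plan, as written, has a real gap precisely at the uniformity step you flag as the main obstacle. From $L_l := kD_j+(kq_j-l)D_0 \le kM$ for a fixed model divisor $M=D_{j_0}+2q_{j_0}D_0$ you cannot conclude that $h^0(D_0,L_l|_{D_0})$ is capped by $h^0(D_0,(kM)|_{D_0})$: the effective difference $kM-L_l$ typically has $D_0$ itself as a component with multiplicity growing like $O(kq_j)$ (its coefficient in front of $D_0$ is precisely of that size), and an effective divisor containing $D_0$ does not restrict to an effective divisor on $D_0$. So ``dominates the relevant restrictions to $D_0$ by a single fixed divisor'' does not follow; the restricted classes $L_l|_{D_0}$ involve $D_0|_{D_0}$, whose positivity is completely uncontrolled (the boundary divisor has no reason to be nef, let alone ample, along itself). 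To rescue the filtration argument you would need a genuine boundedness input---something like Kleiman/Koll\'ar--Matsusaka boundedness for line bundles of bounded $H$-degree on the (varying, singular, possibly non-reduced) scheme $\pi^{-1}(D_0) \subset X_j$---which is considerably heavier machinery than what the paper uses and is not ``routine bookkeeping.'' There is also a secondary issue that $D_0$ lives on $X_0$ rather than $X_j$, so the filtration must be run on a common model dominating both; this is manageable by refining the system of models, but it means the closed subscheme you restrict to changes with $j$, reinforcing the need for a uniform boundedness statement rather than a pointwise one. I'd suggest either switching to the paper's intersection-theoretic route, or, if you want to keep the filtration idea, making the boundedness input explicit and checking it applies to the non-reduced boundary scheme.
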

\begin{proof}
We are going to use Fujita approximation (\cite{Fujita1994ApproximatingZD}) for $\mathbb{Q}$-divisors on projective models. Note that the RHS of the inclusions in Lemma \ref{lemma:incl} gives us that $\text{vol}(D_j+q_jD_0)\ge\widehat{\text{vol}}(\overline{D})$. Hence for $\epsilon_i>0$, we can find by Fujita approximation an ample $\mathbb{Q}$-divisor $A_j$ on a birational modification $\pi\colon X'_j\rightarrow X_j$ such that $\pi^*(D_j+q_jD_0)\ge A_j$ and $\text{vol}(A_j)\ge\text{vol}(D_j+q_jD_0)-\epsilon_j$. Then consider the $\mathbb{Q}$-divisor $A_j-2q_jD_0\le D_j-q_jD_0$ where we consider this effectivity relation in $X_j'$ by pulling back both $D_0$ and $D_j$ to $X_j'$ and we omit the notaions of pull-backs. Write $D_0=A-B$ where $A$ and $B$ are nef effective $\mathbb{Q}$-divisors in $X_0$. Then we have
\[\text{vol}(D_j-q_jD_0)\ge\text{vol}(A_j-2q_jD_0)=\text{vol}(A_j+2q_jB-2q_jA)\]\[\ge (A_j+2q_jB)^d-2dq_j(A_j+2q_jB)^{d-1}\cdot A\ge\ A_j^d-2dq_j(A_j+2q_jB)^{d-1}A\ge\]\[\text{vol}(D_j+q_jD_0)-\epsilon_j-2dq_j(A_j+2q_jB)^{d-1}\cdot A\]
Here in the second inequality we have used Siu's inequality to the nef divisors $A_j+2q_jB$ and $2q_jA$ since both $A$ and $B$ were nef in $X_0$ and nefness is preserved under bi-rational pull-backs whereas $A_j$ is ample in $X_j'$, in the third inequality we have used that $A_j$ is nef and $B$ is nef and effective and in the last one we have used $A_j^d=\text{vol}(A_j)\ge\text{vol}(D_j+q_jD_0)-\epsilon_j$.
Now choosing $\epsilon_j\to 0$ as $j\to\infty$ and suppose we can  choose a nef model divisor $N$ such that $A_j+2q_j\pi_j^*B\le \pi_j^*N$ for all $j$. Then we get that 
$$\text{vol}(D_j-q_jD_0)\ge\text{vol}(D_j+q_jD_0)-2dq_jM-\epsilon_j$$
where $M=N^{d-1}A$ is a fixed number independent of $j$.  Noting that both $\epsilon_j$ and $q_j$ go to $0$ as $j\to\infty$ and noting that $\text{vol}(D_j+q_jD_0)\ge \widehat{\text{vol}}(\overline{D})>0$ is bounded from below independently of $j$, the above inequality shows that for large enough $j$, $\text{vol}(D_j-q_jD_0)>0$ which finishes the claim.\\
Hence we are reduced to showing that there exists a model nef divisor $N$ in $\text{Div}(U,k)_{\text{mod}}$ such that $\pi_j^*N\ge A_j+2q_j\pi_j^*B$ for all $j$. To this end choose a positive integer $r$ such that $r>q_j$ for all $j$. Then consider the divisor $D_1+2rD_0+2rB$ in $X_0$. By Serre's finiteness there is a nef divisor $N$ on $X_0$ such that $N\ge D_1+2rD_0+2rB$. Then since $r>q_1>0$ and $D_0$ is effective, we get that $N\ge D_1+q_1D_0+q_jD_0+2q_jB$ for all $j$. But note that we have the effectivity relation $D_1+q_1D_0\ge D_j$ and hence we conclude $N\ge D_j+q_jD_0+2q_jB$. Since we have the effectivity $\pi_j^*(D_j+q_jD_0)\ge A_j$, pulling back by $\pi_j$ we deduce $\pi_j^*N\ge\pi_j^*(D_j+q_jD_0)+2q_j\pi_j^*B\ge A_j+2q_j\pi_j^*B$ as required. 
\end{proof}
From now on onwards thanks to the previous lemma, we fix  once and for all a $j$ such that $D_k-q_kD_0$ is big for all $k\ge j$.
 The first result we want to state is the boundedness of $\Delta(\overline{D})$ where we use the similar result for integral divisors on projective varieties from \cite{lazarsfeld2008convex} to obtain our claim.\\
 We start with a sequence of inclusions.
\begin{lemma}
\label{lemma:easy}
We have a sequence of inclusions 
$$\Gamma(D_j-q_jD_0)_k\subseteq\Gamma(\overline{D})_k\subseteq \Gamma(D_j+q_jD_0)_k$$
for all positive integers $k$
and hence as a consequence 
$$\Delta(D_j-q_jD_0)\subseteq \Delta(\overline{D})\subseteq\Delta(D_j+q_jD_0)$$

\end{lemma}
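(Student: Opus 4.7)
The plan is to deduce the semigroup inclusions directly from the already-established inclusions of vector spaces of sections in Lemma~\ref{lemma:incl}, and then to pass to the Okounkov body inclusions by taking closed convex hulls. The essential observation is that every inclusion appearing in Lemma~\ref{lemma:incl} is induced by the restriction of sections from a projective model $X_j$ to the open subset $U$, and the valuation $\nu_x$ is defined purely in terms of the power-series expansion in the complete local ring at $x$, which is an open point of both $U$ and $X_j$. Consequently $\nu_x$ is insensitive to whether we view a section as living on $X_j$ or on $U$.

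More concretely, fix a positive integer $k$ and take $s\in H^0(X_j,k(D_j-q_jD_0))'$. By Lemma~\ref{lemma:incl}, its restriction $s|_U$ lies in $H^0(U,O(k\overline{D}))$. Since $x\in U(K)$ is a regular point and the open immersion $U\hookrightarrow X_j$ induces an isomorphism $\widehat{O_{X_j,x}}\cong \widehat{O_{U,x}}$, the lexicographic leading exponent of $s/s_0$ computed in either ring agrees; that is, $\nu_x(s)=\nu_x(s|_U)$. Hence $\Gamma(D_j-q_jD_0)_k\subseteq \Gamma(\overline{D})_k$. The same argument applied to the second inclusion of Lemma~\ref{lemma:incl}, now viewing $s\in H^0(U,O(k\overline{D}))$ as a restriction of an element of $H^0(X_j,k(D_j+q_jD_0))'$, yields $\Gamma(\overline{D})_k\subseteq \Gamma(D_j+q_jD_0)_k$.

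For the Okounkov body statement, note that by Definition~\ref{def:okounkov} each $\Delta(\cdot)$ is the closed convex hull of $\bigcup_{k\geq 1}\tfrac{1}{k}\Gamma(\cdot)_k$ in $\mathbb{R}^d$. Since the semigroup inclusions hold for every $k$, the corresponding unions of rescaled fibers are nested, and the closed convex hull operation is monotone, so the three Okounkov bodies are nested in the same order. No genuine obstacle is expected here: the lemma is essentially a bookkeeping statement converting Lemma~\ref{lemma:incl} into the language of valuation semigroups and convex bodies. The only subtlety worth recording is the identification $\widehat{O_{X_j,x}}\cong\widehat{O_{U,x}}$, which guarantees that the same flag and local trivialisation $s_0$ used to define $\nu_x$ on $H^0(U,O(k\overline{D}))$ compute the usual Okounkov valuations on the projective models $X_j$.
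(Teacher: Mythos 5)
Your proposal is correct and takes essentially the same approach as the paper: the semigroup inclusions are deduced fiberwise from Lemma~\ref{lemma:incl} together with the locality of the valuation $\nu_x$ under the identification $\widehat{O_{X_j,x}}\cong\widehat{O_{U,x}}$, and the body inclusions then follow by monotonicity of the closed convex hull. The only detail the paper flags that you pass over silently is that $D_j\pm q_jD_0$ are $\mathbb{Q}$-divisors, so $\Gamma(\cdot)$ and $\Delta(\cdot)$ on the two extremes must be read as applied to the corresponding model adelic divisors.
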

\begin{proof}
First note that it makes sense to have $\Gamma(\cdot)$ and $\Delta(\cdot)$ in the right and left extremities above even though the arguments are $\mathbb{Q}$-divisors by just viewing them as model adelic divisors in $\text{Div}(U,k)_{\text{mod}}$.
The first  sequence of inclusions then follow easily from the set of injective maps in \ref{lemma:incl} and noting that the construction of $\nu_x$ is local. The second set of inclusions then easily follows from definition of a closed convex hull generated by subsets.
\end{proof}
Finally we can state the boundedness result that we wanted to obtain.
\begin{lemma}
\label{lemma:boundadel}
The subset $\Delta(\overline{D})$ is compact convex subset of $\mathbb{R}^d$.
\end{lemma}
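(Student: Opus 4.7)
The plan is to deduce the claim from the corresponding boundedness statement for Okounkov bodies of big $\mathbb{Q}$-divisors on projective varieties, which is already established in \cite{lazarsfeld2008convex}. Convexity and closedness of $\Delta(\overline{D})$ are essentially automatic from Definition \ref{def:okounkov}: the body is written as $\Sigma(\Gamma(\overline{D}))\cap(\mathbb{R}^d\times\{1\})$, the intersection of a closed convex cone with an affine hyperplane, so it is tautologically a closed convex subset of $\mathbb{R}^d$. Thus the only content of the statement is boundedness.

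For boundedness, I would use the right-hand inclusion in Lemma \ref{lemma:easy}:
\[
\Delta(\overline{D})\subseteq\Delta(D_j+q_jD_0),
\]
where $D_j+q_jD_0$ is viewed as a model adelic divisor in $\text{Div}(U/k)_{\text{mod}}$. Since $j$ has been fixed (using Lemma \ref{lemma:bigmod}) so that $D_j-q_jD_0$ is big on $X_j$, the larger $\mathbb{Q}$-divisor $D_j+q_jD_0$ is certainly big on $X_j$. Using normality of $U$ together with \cite[Lemma~5.1.5 and Remark~5.1.6]{yuan2021adelic}, the adelic sections of multiples of this model divisor coincide with the classical projective sections on $X_j$, so after clearing the denominator of $q_j$ by passing to an integral multiple $n(D_j+q_jD_0)$ and using the standard scaling $\Delta(D_j+q_jD_0)=\tfrac{1}{n}\Delta(n(D_j+q_jD_0))$, the body on the right coincides with the classical Okounkov body of a big integral divisor on a projective variety in the sense of \cite{lazarsfeld2008convex}.

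At this point I invoke the classical boundedness result (see \cite[Lemma~1.11]{lazarsfeld2008convex}), whose proof relies only on the fact that $\dim_K H^0(X_j,mn(D_j+q_jD_0))=O(m^d)$, forcing the normalized slices $\tfrac{1}{mn}\Gamma_{mn}$ to lie in a fixed ball. This bounds $\Delta(D_j+q_jD_0)$, hence bounds $\Delta(\overline{D})$ via the inclusion above. Combined with the convexity and closedness already observed, this yields compactness in $\mathbb{R}^d$. The only genuine subtlety is the identification of our adelic Okounkov body construction with the projective construction of \cite{lazarsfeld2008convex} for the model divisor $D_j+q_jD_0$; once that matching is made, boundedness is a direct citation, so I expect no real obstacle beyond this bookkeeping.
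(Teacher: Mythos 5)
Your proposal is correct and follows essentially the same route as the paper: convexity and closedness are automatic from the definition, boundedness comes from the right-hand inclusion $\Delta(\overline{D})\subseteq\Delta(D_j+q_jD_0)$ supplied by Lemma~\ref{lemma:easy}, clearing denominators to reduce to an integral divisor, and then citing the classical boundedness of Okounkov bodies on projective varieties from \cite{lazarsfeld2008convex}. The only cosmetic differences are that the paper works directly with the semigroup inclusion $\Gamma(\overline{D})\subseteq\frac{1}{t}\Gamma(t\overline{D})$ rather than invoking the scaling equality $\Delta(D_j+q_jD_0)=\frac{1}{n}\Delta(n(D_j+q_jD_0))$ (which is only proved later in Lemma~\ref{lemma:homogen}, so the paper uses just the easy containment), and that the bigness of $D_j+q_jD_0$ you mention is not actually needed, since the classical boundedness result applies to any integral Cartier divisor on a projective variety.
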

\begin{proof}
The said subset is already closed and convex. Hence it is enough to prove that it is bounded. Note that $R=D_j+q_jD_0$ is a $\mathbb{Q}$-divisor on $X_i$ and hence there is an integer $t$ such that $tR$ is an integral Cartier divisor.  Note that from the RHS of the set of inclusions \ref{lemma:incl} we conclude that for any section $s\in\overline{H^0}(U,kt\overline{D})$ induces a section $s'\in H^0(X_i,ktR)'=H^0(U,kt\overline{R})$ and both of these have the same valuation vector. Hence we get that $\Gamma(t\overline{D})\subseteq \Gamma(tR)$ where the RHS is well defined as $tR$ is an integral Cartier divisor which in turn yields by construction that $\Delta(t\overline{D})\subseteq \Delta(tR)$. On the other hand we have $\Gamma(\overline{D})\subseteq \frac{1}{t}\cdot \Gamma(t\overline{D})$ and hence by construction we get $\Delta(\overline{D})\subseteq \frac{1}{t}\cdot\Delta(t\overline{D})$. This readily gives the boundedness as $\Delta(tR)$ is bounded by Lemma 1.10 of \cite{lazarsfeld2008convex} as $tR$ is integral divisor and $X_i$ is projective.
\end{proof}
\begin{remark}
The proof of boundedness for the projective case in \cite{lazarsfeld2008convex} is based on intersecting ample divisors with the flag which gives us the Okounkov construction. It might be interesting to try to give a proof using intersection theory as there is a new intersection theory now with adelic line bundles on quasi-projective varieties. However the notion of \say{adelic ample divisors} are \say{positive} is not immediate to formulate since pull back of ample bundles by birational morphisms is not necessarily ample again and this might arise as a problem.
\end{remark}
\subsection{Volumes of Okounkov bodies}
We want to relate the volume of the Okounkov body $\Delta(\overline{D})$ with the volume of the adelic divisor $\overline{D}$ as defined in \cite{yuan2021adelic}. It will turn out that they are equal (upto scaling) analogous to the projective case. We start with a lemma listing the properties of the $\Gamma(\overline{D})$ which are sufficient to assert the volume equality. We fix a $j$ as in the previous section once again.\\
We begin by recording a result which relates the dimension of the space of global sections with the cardinality of slices of $\Gamma(\overline{D})$. We denote by $\Gamma(\overline{D})_m=\Gamma(\overline{D})\cap (\mathbb{N}^d\times\{m\})$. Then we have 
\begin{lemma}
\label{lemma:graded}
We have $\#\Gamma_m=\emph{dim}_K(\overline{H^0}(U,m\overline{D}))$
\end{lemma}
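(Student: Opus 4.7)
The plan is to invoke the standard principle that for a valuation with values in a totally ordered set, distinct valuations on a finite-dimensional vector space correspond bijectively to a basis of that space. Specifically, I would exploit two algebraic properties of $\nu_x$ that follow immediately from its definition as the lexicographically minimal exponent in the power series expansion: $\nu_x(\lambda f)=\nu_x(f)$ for $\lambda\in K^{\times}$, and $\nu_x(f+g)\geq \min(\nu_x(f),\nu_x(g))$ in lex order, with equality whenever the two valuations differ.

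After fixing the local trivialization $s_0$, the map $s\mapsto s/s_0$ embeds $\overline{H^0}(U,m\overline{D})$ as a finite-dimensional $K$-subspace $V_m$ of $\widehat{O_{U,x}}\cong K[[x_1,\ldots,x_d]]$, and $\nu_x$ is computed there. Using the two properties above, any collection of sections with pairwise distinct valuation vectors is automatically linearly independent: a relation $\sum\lambda_i s_i=0$ with not all $\lambda_i$ zero would force the minimum valuation among the nonzero terms to be attained at least twice, contradicting uniqueness. Hence $\#\nu_x(V_m\setminus\{0\})\leq \dim_K V_m$.

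For the reverse inequality, I would carry out a Gauss-elimination argument. Starting from any basis $s_1,\ldots,s_N$ of $V_m$, order the elements so that $\nu_x(s_1)$ is lex-minimal, and iteratively modify each subsequent $s_k$ by subtracting a suitable $K$-linear combination of the earlier basis elements to kill any repeated leading lex-monomial; this is possible precisely because $\nu_x$ is the leading lex-exponent valuation. The process terminates in a new basis of $V_m$ whose elements have pairwise distinct valuations, giving $\dim_K V_m\leq \#\nu_x(V_m\setminus\{0\})$. Combining the two inequalities yields the equality $\#\nu_x(V_m\setminus\{0\})=\dim_K \overline{H^0}(U,m\overline{D})$.

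Finally, unwinding Definition~\ref{def:okounkov}, the slice $\Gamma(\overline{D})_m$ is by construction exactly $\nu_x(V_m\setminus\{0\})\times\{m\}$, so $\#\Gamma_m=\#\nu_x(V_m\setminus\{0\})$ and the claim follows. I do not expect any genuine obstacle: the argument is formally identical to Lemma~1.3 of \cite{lazarsfeld2008convex}, and the only input beyond abstract valuation theory is that $x\in U(K)$ is a regular point so that $\widehat{O_{U,x}}$ really is a formal power series ring in $d$ variables, which is precisely what makes the lex leading-term valuation well-defined.
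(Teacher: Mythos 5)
Your argument is correct and is essentially what the paper relies on: the paper simply cites Lemma~1.4 of \cite{lazarsfeld2008convex} (the statement that for a finite-dimensional subspace $W$, $\#\nu_x(W\setminus\{0\})=\dim_K W$), together with the finite-dimensionality of $\overline{H^0}(U,m\overline{D})$ from \cite{yuan2021adelic}, while you have written out the standard proof of that cited lemma via the two valuation axioms, linear independence of sections with distinct leading lex-exponents, and Gaussian elimination to produce a basis with pairwise distinct valuations. The mathematical content is identical; you have just unfolded the reference.
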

\begin{proof}
The claim immediately follows from Lemma 1.4 of \cite{lazarsfeld2008convex} by taking $W=\overline{H^0}(U,m\overline{D})$ and noting that $W$ is finite dimensional from Lemma 5.1.7 in \cite{yuan2021adelic}.
\end{proof}
Next we want to naturally extend the notion of Okounkov bodies to $\mathbb{Q}$-adelic line bundles. One necessary property is to show that the construction of $\Delta(\cdot)$ behaves well with taking integral multiples of adelic divisors which is the content of our next lemma.
Note that if we can show $\text{vol}_{\mathbb{R^d}}(\Delta(\overline{D}))=\lim_{m\to\infty}\frac{\#\Gamma_m}{m^d}$, then with the Lemma \ref{lemma:graded} we have that the Euclidean volume of $\Delta(\overline{D})$ is the same as the volume of $\overline{D}$ as defined in Definition \ref{def:vol} upto scaling by $d!$. It turns out that for the above equality to be true, it is enough for $\Gamma(\overline{D})$ to satisfy certain properties which are purely Euclidean geometric in nature. We wish to state and prove them in our main lemma of this section. Before that we prove a property necessary in our next lemma.
\begin{lemma}
\label{lemma:injectamp}
Suppose $\overline{D}$ is a big adelic divisor on a normal quasi-projective variety $U$ given by the sequence of models $\{X_i,D_i\}$ and rationals $\{q_i\to 0\}$ as usual. Then there is a model $X_j$ such that for all ample divisors $\overline{A}$ on $X_j$, there exists a non-zero section $s_0\in H^0(U,m\overline{D}-\overline{A})$ whenever $m$ is a sufficiently large positive integer.
\end{lemma}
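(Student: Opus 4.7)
The plan is to combine the bigness guaranteed by Lemma \ref{lemma:bigmod} with a Kodaira-type argument on a suitable projective model, then transfer back to $U$ via the Cauchy inequality and the inclusion of Lemma \ref{lemma:incl}.

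First, I would invoke Lemma \ref{lemma:bigmod} to fix once and for all an index $j$ large enough so that the $\mathbb{Q}$-divisor $B_j := D_j - q_j D_0$ is big on the projective model $X_j$. This $j$ is independent of $\overline{A}$, and it is the model claimed in the statement.

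Next, for an arbitrary ample Cartier divisor $A$ on $X_j$, I would establish the following intermediate claim: $H^0(X_j, m B_j - A)' \neq 0$ for all sufficiently large $m$. Since $B_j$ is a big $\mathbb{Q}$-divisor on the projective variety $X_j$, Kodaira's lemma yields a positive integer $k_0$ and an effective $\mathbb{Q}$-divisor $E_0$ on $X_j$ with $k_0 B_j - A \sim_{\mathbb{Q}} E_0$. For any $m \ge k_0$, this gives
\[
m B_j - A \;\sim_{\mathbb{Q}}\; (m - k_0) B_j + E_0\,,
\]
so $H^0(X_j, m B_j - A)' \supseteq H^0(X_j, (m - k_0) B_j)'$. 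Because $B_j$ is big, the right-hand side has dimension growing asymptotically like $(m - k_0)^d \,\text{vol}(B_j)/d!$, and in particular is nonzero for $m$ sufficiently large.

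Finally, I would translate this back to the adelic setting. By the Cauchy condition~\eqref{eq:cauchy}, we have $\overline{B_j} \le \overline{D}$ in $\widehat{\Div}(U,K)$, where $\overline{B_j}$ denotes $B_j$ viewed as a model adelic divisor on $X_j$; hence $\overline{m B_j - A} \le m\overline{D} - \overline{A}$, giving an inclusion $H^0(U, \overline{m B_j - A}) \hookrightarrow H^0(U, m\overline{D} - \overline{A})$. Since $U$ is normal, Lemma 5.1.5 and Remark 5.1.6 of \cite{yuan2021adelic} canonically identify $H^0(U, \overline{m B_j - A})$ with $H^0(X_j, m B_j - A)'$, and combining with the previous step completes the proof. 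The only real delicacy is the Kodaira step, which I handle by passing to the integer-round-downs $\lfloor n B_j \rfloor$ where the usual asymptotic growth for big divisors applies; the remaining bookkeeping is a routine application of the Cauchy inequality already exploited in Lemma \ref{lemma:incl}.
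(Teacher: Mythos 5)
Your proof is correct and takes essentially the same route as the paper: both first invoke Lemma~\ref{lemma:bigmod} to fix a model $X_j$ on which $D_j - q_jD_0$ is big, then apply Kodaira's lemma on $X_j$ to produce nonzero sections of $m(D_j-q_jD_0)-\overline{A}$ for large $m$, and finally transfer to $H^0(U,m\overline{D}-\overline{A})$ via the effectivity relation $D_j-q_jD_0\le\overline{D}$ and the identification from Lemma~\ref{lemma:incl}. You spell out the Kodaira step in slightly more detail (deriving the asymptotic nonvanishing from the effective-decomposition form), but the argument is the same.
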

\begin{proof}
The idea is to use Kodaira lemma (Proposition 2.2.6 \cite{lazarsfeld2017positivity}) in the projective case on models approximating $\overline{D}$ from below. More preciesely suppose $D_j'=D_j-q_jD_0$. Then as $\{D_j'\}$ is a sequence also representing the big divisor $\overline{D}$, by Lemma \ref{lemma:bigmod} we can find a $j$ such that $D_j'$ is a big divisor. Now applying the Kodaira lemma on the big divisor $D_j'$ on the projective variety $X_j$, we conclude that for all sufficiently large $m$, there exists a non-zero section of $O(mD_j'-\overline{A})$ on $X_j$ and restricting to $U$, we get a non-zero section $s_0\in H^0(U,mD_j'-\overline{A})$ for all sufficiently large $m$. Now the claim follows from noting that the effectivity relation $\overline{D}\ge D_j'$ implies that $H^0(U,mD_j'-\overline{A})\subseteq H^0(U,m\overline{D}-\overline{A})$.
\end{proof}
\begin{lemma}
\label{lemma:fingen}
Suppose $\overline{D}$ is a big adelic divisor on a normal quasi-projective variety $U$ over $K$. Then the convex body $\Gamma(\overline{D})$  satisfies the following properties: 
\begin{enumerate}
    \item $\Gamma_0=\{0\}$
    \item There exist finitely many vectors $(v_i,1)$ spanning a semi-group $B\subseteq \mathbb{N}^{d+1}$ such that $\Gamma(\overline{D})\subseteq B$. 
    \item $\Gamma(\overline{D})$ generates $\mathbb{Z}^{d+1}$ as a group.
\end{enumerate}

\end{lemma}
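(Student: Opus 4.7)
The plan is to verify the three conditions in turn. Items (1) and (2) follow quickly from the definitions and from the boundedness result of Lemma~\ref{lemma:boundadel}, while (3) is the substantive part and will rely on Lemma~\ref{lemma:injectamp} together with the jet-separation property of a sufficiently ample divisor on the model $X_j$. The hardest step is (3), where we must construct enough sections with prescribed leading valuations to generate the full lattice.

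For (1), the space $H^0(U, 0\cdot\overline{D})$ consists of $f\in\kappa(U)^\times$ whose divisor is effective as an adelic divisor. Such an $f$ extends to a regular function on every projective model of $U$, hence is a constant in $K$, and $\nu_x$ vanishes on nonzero constants, so $\Gamma_0 = \{0\}$. For (2), by Lemma~\ref{lemma:boundadel} we may choose a positive integer $C$ with $\Delta(\overline{D}) \subseteq [0,C]^d$, so that every $(\alpha, m) \in \Gamma(\overline{D})$ with $m\ge 1$ satisfies $0\le\alpha_i\le Cm$. Enumerating the lattice points of $[0,C]^d$ as $v_1,\ldots,v_r$, we verify by induction on $m$ that any lattice point in the cone $\{(\alpha, m) : 0 \le \alpha_i \le Cm\}$ lies in the semi-group generated by $\{(v_i, 1)\}$: setting $w_i = \min(\alpha_i, C)$, the point $(w,1)$ is one of the generators and $(\alpha-w, m-1)$ still satisfies the cone bound, allowing the inductive step.

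For (3), the goal is to exhibit $(e_i, 0)$ for $i=1,\ldots,d$ and $(0,1)$ in the $\mathbb{Z}$-span of $\Gamma(\overline{D})$; together these generate $\mathbb{Z}^{d+1}$. Fix the model $X_j$ from Lemma~\ref{lemma:injectamp}, choose a very ample divisor $\overline{A}$ on $X_j$, and, after replacing $\overline{A}$ by a sufficiently high multiple if necessary, assume by Serre vanishing that the evaluation map $H^0(X_j, \overline{A}) \to \overline{A}|_x \otimes (\mathcal{O}_{X_j,x}/\mathfrak{m}_x^2)$ is surjective. This produces sections $t_0, t_1, \ldots, t_d \in H^0(X_j, \overline{A})$ with $\nu_x(t_0) = 0$ and $\nu_x(t_i) = e_i$ for $i = 1, \ldots, d$. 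By Lemma~\ref{lemma:injectamp}, for all sufficiently large $m$ there is a nonzero $s_m \in H^0(U, m\overline{D} - \overline{A})$; the product $s_m t_i$ lies in $H^0(U, m\overline{D})$ with valuation $\nu_x(s_m) + e_i$, by additivity of $\nu_x$ on products of sections of line bundles. Subtracting the $i=0$ case from the others yields $(e_i, 0)$ in the $\mathbb{Z}$-span of $\Gamma(\overline{D})$ for $i = 1, \ldots, d$. Finally, by Lemma~\ref{lemma:bigmod}, $H^0(U, m\overline{D})\neq 0$ for all sufficiently large $m$; choosing valuation vectors in $\Gamma(\overline{D})$ at consecutive levels $m$ and $m+1$ and subtracting off the $(e_i, 0)$ components of their difference produces $(0, 1)$ in the $\mathbb{Z}$-span, completing the generation of $\mathbb{Z}^{d+1}$.
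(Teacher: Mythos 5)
Your proof is correct and follows essentially the same route as the paper: item (1) from the fact that adelic-effective principal divisors force global regular functions (hence constants), item (2) from the boundedness of Lemma~\ref{lemma:boundadel}, and item (3) by combining Lemma~\ref{lemma:injectamp} with sections of a very ample divisor on $X_j$ separating $1$-jets at the flag point. The paper packages item (3) slightly differently — it builds sections at two consecutive levels $m_0, m_0+1$ and extracts the generators of $\mathbb{Z}^{d+1}$ from the resulting valuation vectors $(f_0,m_0)$, $(f_0+e_i,m_0)$, $(f_1,m_0+1)$ — whereas you first extract the horizontal generators $(e_i,0)$ and then separately produce $(0,1)$ from any two consecutive nonempty levels, but these are the same idea reorganized.
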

\begin{proof}
The first point is trivial. For the second point we follow the proof of Lemma 2.2 in \cite{lazarsfeld2008convex}. Denote by $v_i(s)$ the $i$-th co-ordinate in the valuation vector of a section $s$. Note then $v_i(s)\le mb$ for some large constant $b$ and for all non-zero $s\in H^0(U,m\overline{D})$ due to the fact that $\Delta(\overline{D})$ is bounded (Lemma \ref{lemma:boundadel}) as $\Delta(\overline{D})$ contains $\frac{1}{m}\cdot \Gamma(\overline{D})_m$ for each $m\in\mathbb{N}$. Now a basic algebraic calculation easily shows that $\Gamma(\overline{D})$ is contained in the semi-group generated by the finite set of integer vectors $\{(a_i)\mid 0\le a_i\le b\}$ which shows the second point.  Hence it is enough to prove the third point.\\
To this end, choose a model $X_j$ which satisfies the condition of Lemma \ref{lemma:injectamp}. Then choose a very ample divisor $\overline{A}$ on $X_j$ such that there exists sections $\overline{s_i}$ of $O(\overline{A})$ for $i=0,1,\ldots d$ with $v(\overline{s_i})=e_i$ where $v$ is the valuation vector with respect to the chosen flag and $\{e_i\}$ is the standard basis of $\mathbb{R}^d$ for $i=1,\ldots d$ and $e_0$ is the zero vector, as suggested in the beginning of the proof of Lemma 2.2 in \cite{lazarsfeld2008convex}. Restricting these sections give sections $s_i\in H^0(U,\overline{A})$ with $v(s_i)=e_i$. Now thanks to Lemma \ref{lemma:injectamp} and our choice of $X_j$, we can find non-zero sections $t_i\in H^0(U,(m_0+i)\overline{D}-\overline{A})$ for $i=0,1$ with valuation vectors $v(s_i)=f_i$. Then clearly we find non-zero sections $s_i'=s_i\otimes t_0\in H^0(U,m_0\overline{D})$ and $s_0''=s_0\otimes t_1\in H^0((m+1)\overline{D})$ with valuation vectors $v(s_i')=(f_0+e_i)$ for $i=0,\ldots d$ and $v(s_i'')=f_1$. Hence $\Gamma(\overline{D})$ contains the vectors $(f_0,m_0)$, $(f_0+e_i,m_0)$ for $i=1\ldots d$ and $(f_1,m_0+1)$. Then it clearly shows that $\Gamma(\overline{D})$ generated $\mathbb{Z}^d$ as a group and finishes the proof.
\end{proof}
We are ready to state the first main theorem of this chapter.
\begin{theorem}
\label{thm:okoun}
Suppose we have a big adelic divisor $\overline{D}$ on a normal quasi-projective variety $U$ and suppose $\Delta(\overline{D})$ is the Okounkov body associated to $\overline{D}$ as constructed above. Furthermore suppose $\widehat{\emph{vol}}(\overline{D})$ be the adelic volume defined in section 5 of \cite{yuan2021adelic}. Then we have 
$$\emph{vol}_{\mathbb{R}^d}(\Delta(\overline{D}))=\lim_{m\to\infty}\frac{\#\Gamma_m}{m^d}=\lim_{m\to\infty}\frac{\emph{dim}_K(\overline{H^0}(U,m\overline{D}))}{m^d}=\frac{1}{d!}\cdot \widehat{\emph{vol}}(\overline{D})$$
\end{theorem}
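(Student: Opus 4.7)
The plan is to prove the three equalities left to right, with the middle one essentially cosmetic and the outer two carrying the content. For the first equality $\text{vol}_{\mathbb{R}^d}(\Delta(\overline{D}))=\lim_m \#\Gamma_m/m^d$, I would invoke the Khovanskii-type semigroup counting result packaged as Proposition~2.1 of \cite{lazarsfeld2008convex}: any semigroup $\Gamma \subseteq \mathbb{N}^{d+1}$ satisfying the three properties (trivial at level zero, contained in a finitely generated cone whose generators lie at height one, and generating $\mathbb{Z}^{d+1}$ as a group) has $\#\Gamma_m / m^d$ converging to the Lebesgue volume of the associated convex body $\Delta = \Sigma(\Gamma) \cap (\mathbb{R}^d \times \{1\})$. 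The three hypotheses are exactly what Lemma~\ref{lemma:fingen} has just verified for $\Gamma(\overline{D})$, so no further work is needed at this step.

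The second equality $\lim_m \#\Gamma_m/m^d = \lim_m \dim_K \overline{H^0}(U, m\overline{D})/m^d$ is the termwise identity $\#\Gamma_m = \dim_K \overline{H^0}(U, m\overline{D})$ of Lemma~\ref{lemma:graded}. For the third equality, I would appeal to Definition~\ref{def:vol}: once we know that the sequence $\dim_K \overline{H^0}(U, m\overline{D})/m^d$ has a genuine limit equal to $\text{vol}_{\mathbb{R}^d}(\Delta(\overline{D}))$, the $\limsup$ used to define $\widehat{\text{vol}}(\overline{D})/d!$ necessarily coincides with this limit, giving $\widehat{\text{vol}}(\overline{D}) = d! \cdot \text{vol}_{\mathbb{R}^d}(\Delta(\overline{D}))$. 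Note that this provides the independent proof, promised in the remark after Definition~\ref{def:vol}, that the $\limsup$ in the definition of $\widehat{\text{vol}}$ is actually a limit, without appealing to Theorem~5.2.1(1) of \cite{yuan2021adelic}.

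The main obstacle has already been confronted in the preceding lemmas rather than in Theorem~\ref{thm:okoun} itself. Property~(3) of Lemma~\ref{lemma:fingen} (generation of $\mathbb{Z}^{d+1}$) is the delicate one: its proof relies on the bigness hypothesis through the model-level Kodaira lemma of Lemma~\ref{lemma:injectamp}, which in turn rests on the existence of a big approximating model divisor $D_j - q_j D_0$ from Lemma~\ref{lemma:bigmod}. Property~(2) in turn was powered by the boundedness of $\Delta(\overline{D})$ from Lemma~\ref{lemma:boundadel}, which pulls boundedness back from the integral projective case. Once these semigroup properties and boundedness are in hand, the proof of the theorem itself is a short formal invocation of the Khovanskii-style counting statement, followed by the promotion of a $\limsup$ to a true limit.
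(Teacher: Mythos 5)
Your proposal is correct and follows the same route as the paper: invoke Lemma~\ref{lemma:fingen} to verify the hypotheses of the Khovanskii-type counting result (Proposition~2.1 of \cite{lazarsfeld2008convex}), identify $\#\Gamma_m$ with $\dim_K \overline{H^0}(U,m\overline{D})$ via Lemma~\ref{lemma:graded}, and read off the last equality from Definition~\ref{def:vol}. The paper's own proof is just a compressed version of exactly this, so no differences to report.
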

\begin{proof}
With Lemma \ref{lemma:fingen} and by basic arguments of euclidean and convex geometry as indicated in the proof of Proposition 2.1 of \cite{lazarsfeld2008convex}, we get that
\begin{equation}
   \label{eqn:wawa}
   \text{vol}_{\mathbb{R^d}}(\Delta(\overline{D}))=\lim_{m\to\infty}\frac{\#\Gamma_{m}}{m^d}=\lim_{m\to\infty}\frac{\text{dim}_K(H^0(U,m\overline{D})}{m^d}
\end{equation}
exists which clearly gives the claim.
\end{proof}
\begin{remark}
Note that the above theorem also proves that the $\limsup$ in the definition of $\widehat{\text{vol}}(\overline{D})$ is actually given by a limit directly from convex geometric properties of the Okounkov bodies which is essentially the content of the first part of Theorem 5.2.1 of \cite{yuan2021adelic}.
\end{remark}
We end this section by showing that the construction of Okounkov body is homogenous with respect to scaling. 
\begin{lemma}
\label{lemma:homogen}
Suppose $\overline{D}$ is a big adelic divisor on a normal quasi-projective variety $U$. Then 
\[\Delta(t\overline{D})=t\cdot \Delta(\overline{D})\]
for all positive integers $t$. Hence we can naturally extend the construction of $\Delta(\cdot)$ to big adelic $\mathbb{Q}$-divisors.
\end{lemma}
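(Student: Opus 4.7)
The plan is to prove the equality by a double inclusion at the level of the graded semigroups $\Gamma(\cdot)$, using the tautological identity $\Gamma(t\overline{D})_m=\Gamma(\overline{D})_{mt}$ for every $m\in\mathbb{N}$. This identity holds because a section of $m(t\overline{D})$ is, by definition, a function $f\in\kappa(U)^\times$ with $\mathrm{div}(f)+mt\overline{D}\ge 0$, which is exactly the defining condition for a section of $(mt)\overline{D}$, and $\nu_x$ only depends on $f$.

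For the inclusion $\Delta(t\overline{D})\subseteq t\cdot\Delta(\overline{D})$, any vector of the form $\tfrac{1}{m}v$ with $v\in\Gamma(t\overline{D})_m=\Gamma(\overline{D})_{mt}$ can be rewritten as $\tfrac{t}{mt}v\in t\cdot\bigl(\tfrac{1}{mt}\Gamma(\overline{D})_{mt}\bigr)\subseteq t\cdot\Delta(\overline{D})$. Since $t\cdot\Delta(\overline{D})$ is closed and convex (Lemma \ref{lemma:boundadel}), the closed convex hull of these vectors — which is exactly $\Delta(t\overline{D})$ — lies inside it.

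For the reverse inclusion $t\cdot\Delta(\overline{D})\subseteq\Delta(t\overline{D})$, the key point is the semigroup structure of $\Gamma(\overline{D})$. Given $s_1\in H^0(U,m_1\overline{D})$ and $s_2\in H^0(U,m_2\overline{D})$, the product $s_1s_2\in\kappa(U)^\times$ satisfies
\[
\mathrm{div}(s_1s_2)+(m_1+m_2)\overline{D}=\bigl(\mathrm{div}(s_1)+m_1\overline{D}\bigr)+\bigl(\mathrm{div}(s_2)+m_2\overline{D}\bigr)\ge 0,
\]
so $s_1s_2\in H^0(U,(m_1+m_2)\overline{D})$. Moreover, the valuation $\nu_x$ is additive on products, because in $\widehat{O_{U,x}}\cong K[[x_1,\ldots,x_d]]$ the leading lex exponent of a product of formal power series is the sum of the leading lex exponents. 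Hence, if $v\in\Gamma(\overline{D})_n$ is realized by a section $s$, then $s^t$ realizes $tv\in\Gamma(\overline{D})_{nt}=\Gamma(t\overline{D})_n$. Therefore $\tfrac{t}{n}v=\tfrac{1}{n}(tv)\in\tfrac{1}{n}\Gamma(t\overline{D})_n\subseteq\Delta(t\overline{D})$; closing and taking convex hulls gives $t\cdot\Delta(\overline{D})\subseteq\Delta(t\overline{D})$.

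Finally, for the extension to big adelic $\mathbb{Q}$-divisors, given $\overline{D}$ pick $k\ge 1$ with $k\overline{D}$ an (integral) adelic divisor and define $\Delta(\overline{D}):=\tfrac{1}{k}\Delta(k\overline{D})$. The homogeneity for integers just established shows the definition is independent of $k$: for two such integers $k_1,k_2$, applying homogeneity with $t=k_2$ to the adelic divisor $k_1\overline{D}$ gives $\Delta(k_1k_2\overline{D})=k_2\cdot\Delta(k_1\overline{D})$, hence $\tfrac{1}{k_1}\Delta(k_1\overline{D})=\tfrac{1}{k_1k_2}\Delta(k_1k_2\overline{D})$, and symmetrically for $k_2$, so the two candidate definitions coincide. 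The only point requiring any care in this proof is the multiplicativity of $\nu_x$ on sections, which is standard power-series bookkeeping but should be recorded explicitly since it is also implicitly needed in Lemma~\ref{lemma:fingen}.
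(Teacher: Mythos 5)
Your proof is correct, and it takes a genuinely different and cleaner route than the paper's. You exploit two structural facts: the tautology $m(t\overline{D})=(mt)\overline{D}$ (so the slices $\Gamma(t\overline{D})_m$ and $\Gamma(\overline{D})_{mt}$ have the same valuation vectors), and the \emph{semigroup} property of $\Gamma(\overline{D})$ under tensor powers together with the multiplicativity $\nu_x(s^t)=t\,\nu_x(s)$ in $K[[x_1,\ldots,x_d]]$. With these, both inclusions follow just from the fact that $\Delta(\cdot)$ is by construction a closed convex set, so it suffices to check generators. The paper instead proves the reverse inclusion $t\cdot\Delta(\overline D)\subseteq\Delta(t\overline D)$ by an interleaving/squeezing argument: it uses bigness to produce, for each residue $r\bmod t$, nonzero ``padding'' sections $s_r\in H^0(U,r\overline D)$ and $t_r\in H^0(U,(q_0t-r)\overline D)$, obtains shifted inclusions $\Gamma(t\overline D)_m+e_r+f_r\subseteq\Gamma(\overline D)_{mt+r}+f_r\subseteq\Gamma(t\overline D)_{m+q_0}$, and lets $m\to\infty$ so that the shifts $e_r,f_r$ and the offset $q_0$ become negligible after rescaling. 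Your argument buys simplicity and also strictly more generality: it never uses the bigness hypothesis, whereas the paper's route genuinely needs it (through the existence of the $s_r,t_r$, via Lemma~\ref{lemma:bigmod}). You correctly flag that the multiplicativity of $\nu_x$ is the one substantive bookkeeping point; it holds because the lexicographic order is a total multiplicative monomial order, so the lowest lex exponent of a product of nonzero power series is the sum of the lowest lex exponents. The extension to $\mathbb{Q}$-divisors is handled the same way in both proofs (well-definedness of $\frac{1}{k}\Delta(k\overline D)$).
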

\begin{proof}
We choose an integer $r_0$ such that $H^0(U,r\overline{D})\neq \{0\}$ for all $r>r_0$. We can always do this as we assumed $\overline{D}$ is big (as explained in the proof of Lemma \ref{lemma:fingen}). Next choose $q_0>0$ such that $q_0t-(t+r_0)>r_0$ for all $t$. Then for all $r_0+1\le r\le r_0+t$ we can find non-zero sections $s_r\in H^0(U,r\overline{D})$ and $t_r\in H^0(U,(q_0t-r)\overline{D})$ which gives inclusions 
\[H^0(U,mt\overline{D})\xhookrightarrow{\otimes s_r}H^0(U,(mt+r)\overline{D})\xhookrightarrow{\otimes t_r}H^0(U,(m+q_0)t\overline{D})\]
which gives the corresponding inclusion of the graded semi-groups
\[\Gamma(t\overline{D})_m+e_r+f_r\subseteq\Gamma(\overline{D})_{mt+r}+f_r\subseteq\Gamma(t\overline{D})_{m+q_0}\]
where $e_r=v(s_r)$ and $f_r=v(t_r)$. Now recalling the construction of $\Delta(\cdot)$ and letting $m\to\infty$ we get 
\[\Delta(t\overline{D})\subseteq t\cdot \Delta(\overline{D})\subseteq\Delta(t\overline{D})\]
which clearly finishes our proof.
\end{proof}
\begin{remark}
Note that this homogeneity allows us to define Okounkov bodies for adelic $\mathbb{Q}$-divisors by passing to integral multiples and hence conclude that adelic volumes are homogenous for big divisors as in the projective case.
\end{remark}
\subsection{Variation of Okounkov bodies}
We fix a normal quasi-projective variety $U$ over $K$ and a big adelic divisor $\overline{D}$ on it. Furthermore  suppose $\overline{E}$ is any adelic divisor on $U$. We will construct a global convex body $\Delta(U)=\Delta(U,\overline{D},\overline{E})\subseteq \mathbb{R}^d\times \mathbb{R}^2$ such that the fiber of this body over a vector $(a_1,a_2)\in \mathbb{Q}^2$ under the projection to $\mathbb{R}^2$ will give us the Okounkov body of the adelic $\mathbb{Q}$-divisor $q_1\overline{D}+q_2\overline{E}$ provided it is big. Furthermore we fix a flag $Y_d\subset \ldots\subset Y_0$ as before.  We are going to follow closely the arguments in Section 4 of \cite{lazarsfeld2008convex}. All constructions are dependent on the choice of the divisor $\overline{D}$ and $\overline{E}$ but we fix them for this section and we omit them in the notation.
We start by defining the semi-group associated to these two adelic divisors.
\begin{definition}
\label{def:globalbody}
Suppose $\overline{D}$ and $\overline{E}$ are as before. We define the graded semi-group $\Gamma(U)$ as
\[\Gamma(U)=\{((v(s),a_1,a_2)\mid a_i\in \mathbb{Z}, s\neq 0\in H^0(U,a_1\overline{D}+a_2\overline{E})\}\]
where $v(\cdot)$ is the valuation corresponding to the chosen flag. Further more we define the global Okounkov body $\Delta(U)$ as 
\[\Delta(U)=\text{closed convex cone}(\Gamma(U))\]
which is a closed convex subset of $\mathbb{R}^d\times \mathbb{R}^2$.
\end{definition}
As in the case with one bundle, we will deduce the properties needed from general properties of convex bodies and graded semi-groups. Before doing that we define certain terms necessary.
\begin{definition}
\label{def:conv}
Suppose we have an additive semi-group $\Gamma$ in $\mathbb{R}^d\times \mathbb{R}^2$. Denote by $P$ the projection from $\mathbb{R}^d\times \mathbb{R}^2$ to $\mathbb{R}^2$ and $\Delta=\Sigma(\Gamma)$ is the closed convex cone generated by $\Gamma$. We define the \emph{support} of $\Delta$, denoted as $\text{Supp}(\Delta)$ to be its image under $P$. It coincides with the closed convex cone in $\mathbb{R}^2$ generated by the image of $\Gamma$ under $P$. Finally given a vector $\vec{a}=(a_1,a_2)\in \mathbb{Z}^2$ we denote 
\[\Gamma_{\mathbb{N}\vec{a}}=\Gamma\cap (\mathbb{N}^r\times \mathbb{N}\vec{a})\]
\[\Delta_{\mathbb{R}\vec{a}}=\Delta\cap(\mathbb{R}^d\times\mathbb{R}\vec{a})\]
Furthermore we denote $\Gamma_{\mathbb{N}\vec{a}}$ as a semi-group inside $\mathbb{N}^d\times\mathbb{N}\vec{a}\cong\mathbb{N}^{d+1}$ and denote the closed convex cone generated by it in $\mathbb{R}^{d+1}$ as $\Sigma(\Gamma_{\mathbb{N}\vec{a}})$.
\end{definition}
With the above definitions we can state our next lemma.
\begin{lemma}
\label{lemma:interior}
Suppose the semi-group $\Gamma$ generates a sub-group of finite index in $\mathbb{Z}^{d+2}$ and suppose $\vec{a}\in\mathbb{N}^2$ such that $\vec{a}\in\emph{int}(\emph{Supp}(\Delta))$. Then we have 
\[\Delta_{\mathbb{R}\vec{a}}=\Sigma(\Gamma_{\mathbb{N}\vec{a}})\]
\end{lemma}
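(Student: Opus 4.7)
The plan is to reduce this to a purely convex-geometric statement about semi-groups of the type that appears in Section 4 of \cite{lazarsfeld2008convex} (specifically, the analogue of Proposition 4.1 / Proposition 3.1 there). The containment $\Sigma(\Gamma_{\mathbb{N}\vec{a}})\subseteq \Delta_{\mathbb{R}\vec{a}}$ is automatic from the definitions: every element of $\Gamma_{\mathbb{N}\vec{a}}$ sits in $\Gamma\cap (\mathbb{R}^d\times\mathbb{R}\vec{a})$, so its closed convex cone is contained in $\Delta\cap(\mathbb{R}^d\times\mathbb{R}\vec{a})$. The real work is in the reverse inclusion.

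For the reverse inclusion, I would start with a point $(\vec{u},t\vec{a})\in\Delta_{\mathbb{R}\vec{a}}$ with $t>0$, and show that it lies in $\Sigma(\Gamma_{\mathbb{N}\vec{a}})$. Since $\Delta=\Sigma(\Gamma)$, I can approximate $(\vec{u},t\vec{a})$ arbitrarily closely by positive rational combinations of elements of $\Gamma$; after clearing denominators and rescaling I obtain, for every $\epsilon>0$ and some large $M$, a point $(\vec{U},M\vec{a}+\vec{\eta})\in\Gamma+\cdots+\Gamma$ (a sum of finitely many elements of $\Gamma$) with $|\vec{\eta}|<\epsilon M$. I then need to correct $\vec{\eta}$ to land exactly in $\mathbb{N}\vec{a}$. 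This is where the interior hypothesis enters: since $\vec{a}\in\operatorname{int}(\operatorname{Supp}(\Delta))$, there exist $\gamma_1,\gamma_2\in\Gamma$ whose projections $\vec{b}_1,\vec{b}_2\in\mathbb{N}^2$ span a two-dimensional cone containing $\vec{a}$ strictly in its interior. Consequently, any integer vector $\vec{\eta}$ of norm bounded linearly in $M$ can be killed by adding a non-negative integer combination of $\vec{b}_1,\vec{b}_2$ (to one side) and of $\vec{a}$ (to the other side), because the cone spanned by $\vec{b}_1,\vec{b}_2$ and $-\vec{a}$ fills a neighborhood of the origin after clearing the appropriate integral multiples. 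Translating back, I get a new sum of elements of $\Gamma$ whose $\mathbb{R}^2$-projection is an integer multiple of $\vec{a}$, i.e.\ an element of $\Gamma_{\mathbb{N}\vec{a}}$.

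The finite-index hypothesis on the subgroup generated by $\Gamma$ is what lets the previous correction be carried out at the integer (rather than merely rational) level: it guarantees that for some fixed integer $N$, every element of $N\mathbb{Z}^{d+2}$ is an integer combination of elements of $\Gamma$, so after replacing $M$ by $NM$ any integral first-coordinate mismatch produced while killing $\vec{\eta}$ can be absorbed. Passing to the limit $M\to\infty$, $\epsilon\to 0$ shows that $(\vec{u},t\vec{a})$ lies in the closed convex cone on $\Gamma_{\mathbb{N}\vec{a}}$, which is exactly $\Sigma(\Gamma_{\mathbb{N}\vec{a}})$.

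The main obstacle I expect is the bookkeeping in the correction step: one has to show that the error $\vec{\eta}$, which a priori can have either sign, is always representable as a non-negative integer combination of elements of $\Gamma$ (modulo the finite-index correction). This is where the strict interiority of $\vec{a}$ in the support is indispensable, because it yields a genuine two-dimensional wedge of directions in $\Gamma$'s projection around $\vec{a}$; without it one could at best trap $\vec{a}$ on the boundary and the cancellation argument would fail. Once the wedge is in hand and the finite-index hypothesis is invoked to upgrade rational combinations to integral ones, the argument reduces to the same kind of elementary lattice computation as in \cite[Proposition~3.1]{lazarsfeld2008convex}, which I would cite directly (or else imitate verbatim in $\mathbb{R}^{d+2}$ with $r=2$).
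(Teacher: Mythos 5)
Your proposal is correct and follows essentially the same route as the paper: the paper's entire proof is the single remark that the statement and its proof are identical to Proposition~4.9 of \cite{lazarsfeld2008convex}, and what you sketch (approximating a point of $\Delta_{\mathbb{R}\vec a}$ by elements of $\Gamma$, using two elements whose $\mathbb{R}^2$-projections strictly surround $\vec a$ to absorb the deviation, and invoking the finite-index hypothesis to make the correction integral) is precisely the argument behind that proposition. The only small slip is the citation: the relevant result in \cite{lazarsfeld2008convex} is Proposition~4.9, not~3.1 or~4.1.
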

\begin{proof}
The statement and the proof of the Lemma is identical as in Proposition 4.9 of \cite{lazarsfeld2008convex}.
\end{proof}
Next we want to show that the vectors which gives rise to big combinations of the bundles $\overline{D}$ and $\overline{E}$ in fact belong to the interior $\text{int}(\text{Supp}(\Delta))$ which is the content of the next lemma. Note that by passing to rational multiples just as in the projective case, we can similarly define $\mathbb{Q}$-adelic divisors. Furthermore by the remark at the end of the previous section, we can also define Okounkov bodies for $\mathbb{Q}$-adelic divisors which behave homogenously.
\begin{lemma}
\label{lemma:biginter}
Suppose $\vec{a}\in\mathbb{Q}^2$ such that $a_1\overline{D}+a_2\overline{E}$ is a big adelic divisor. Then $\vec{a}\in\emph{int}(\emph{Supp}(\Delta))$.
\end{lemma}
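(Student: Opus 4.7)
The plan is to show that an open Euclidean neighbourhood of $\vec{a}$ lies inside $\text{Supp}(\Delta)$, which forces $\vec{a}\in\text{int}(\text{Supp}(\Delta))$. The guiding analogy is with Lemma~4.10 of \cite{lazarsfeld2008convex}: in the projective case Kodaira's lemma together with openness of the ample cone yields the corresponding statement. Here the replacement for Kodaira's lemma is Lemma~\ref{lemma:injectamp}, while openness of the ample cone will be replaced by a quantitative domination estimate controlled by the boundary norm.

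After clearing denominators via Lemma~\ref{lemma:homogen}, I would assume $\vec{a}\in\mathbb{Z}^2$. First, applying Lemma~\ref{lemma:injectamp} to the big adelic divisor $\overline{F}=a_1\overline{D}+a_2\overline{E}$ yields a model $X_j$ with the following property: for every ample $\overline{A}$ on $X_j$ and all sufficiently large $m$, there is a non-zero section $s_0\in H^0(U,m\overline{F}-\overline{A})$. The next step is to exploit the freedom in choosing $\overline{A}$: since both $\overline{D}$ and $\overline{E}$ have finite boundary norm, I would fix a positive integer $c$ and pick $\overline{A}$ ample on $X_j$ large enough that $\overline{A}-2cqD_0$ is effective as an adelic divisor, where $q$ is a positive rational uniformly bounding $\|\overline{D}\|_{D_0}$ and $\|\overline{E}\|_{D_0}$. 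Since adelic inequalities induce inclusions of global-section spaces (cf.\ Lemma~\ref{lemma:incl}) and since $\overline{A}+k_1\overline{D}+k_2\overline{E}\ge \overline{A}-2cqD_0$ in $\widehat{\text{Div}}(U,K)$, this forces $\overline{A}+k_1\overline{D}+k_2\overline{E}$ to admit a non-zero global section on $U$ for every integer pair $(k_1,k_2)$ with $|k_i|\le c$. Finally, for each such pair, multiplying $s_0$ by such a section produces a non-zero section of $(ma_1+k_1)\overline{D}+(ma_2+k_2)\overline{E}$, placing $(ma_1+k_1,ma_2+k_2)$ in the image of $\Gamma(U)$ under projection to $\mathbb{Z}^2$. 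This image therefore contains the lattice square of side $2c$ centred at $(ma_1,ma_2)$; rescaling by $1/m$ and taking the convex hull places a full Euclidean square about $\vec{a}$ inside $\text{Supp}(\Delta)$, giving the claim.

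I expect the main technical obstacle to be the domination step used in choosing $\overline{A}$, namely producing an ample $\overline{A}$ on $X_j$ for which $\overline{A}-2cqD_0$ remains effective in the adelic sense. Concretely, one passes to a common projective refinement of $X_0$ and $X_j$, pulls back both $A_j$ and $D_0$, and uses that sufficiently large multiples of any ample divisor dominate a fixed effective one -- an argument entirely parallel to the one already carried out in the proof of Lemma~\ref{lemma:bigmod}. None of this is conceptually deep, but keeping the bookkeeping of models, pull-backs, and the boundary norm straight is the only place where real care is required.
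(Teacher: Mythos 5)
There is a genuine gap in the domination step. You propose to pick a positive rational $q$ with $-qD_0\le\overline{D}\le qD_0$ and $-qD_0\le\overline{E}\le qD_0$, i.e.\ to bound the boundary norms $\|\overline{D}\|_{D_0}$ and $\|\overline{E}\|_{D_0}$. But for a general big adelic divisor these norms are $\infty$. Since $D_0$ is supported on $X_0\setminus U$, restricting the inequality $\overline{D}\le qD_0$ to $U$ gives $D\le 0$ (where $D=\overline{D}|_U$), while restricting $-qD_0\le\overline{D}$ gives $D\ge 0$; together these force $D=0$. So $\|\overline{D}\|_{D_0}<\infty$ only when $\overline{D}$ is supported entirely on the boundary, which is not the situation here. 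The boundary norm controls the tails $\overline{D}-D_j$ of the defining Cauchy sequence, not $\overline{D}$ itself, so the claimed effectivity of $\overline{A}+k_1\overline{D}+k_2\overline{E}$ does not follow as stated, and passing to a common refinement does not help.

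The underlying idea can be repaired: instead of dominating $\pm\overline{D},\pm\overline{E}$ by multiples of $D_0$, use the Cauchy bounds $D_1-q_1D_0\le\overline{D}\le D_1+q_1D_0$ (and the analogue for $\overline{E}$) to bound $k_1\overline{D}+k_2\overline{E}$ from below, for all $|k_i|\le c$, by a fixed \emph{model} divisor $-M$ depending only on $c$, $D_1$, $E_1$, $q_1$ and $D_0$; then choose the model $X_j$ in Lemma~\ref{lemma:injectamp} to dominate the models carrying $D_1$, $E_1$, $D_0$, and pick $\overline{A}$ ample on $X_j$ with $\overline{A}\ge M$ by Serre finiteness, exactly as in the proof of Lemma~\ref{lemma:bigmod}. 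With this fix the rest of your plan goes through and produces the 2D lattice square of $P(\Gamma)$-points around $m\vec{a}$ directly. The paper's own proof proceeds differently: it first establishes one-dimensional openness in the $\overline{E}$-direction by using bigness of $(D_j-q_jD_0)+q(E_j-q_jD_0)$ for $j\gg 0$ (via Lemma~\ref{lemma:bigmod}) together with continuity of the projective volume to perturb $q$, and then promotes this to two-dimensional openness by manipulating the ratios $\frac{a_2+t_2}{a_1+t_1}$ and exploiting the cone structure of $\text{Supp}(\Delta)$. Your corrected approach is more uniform and avoids the slope bookkeeping, at the cost of needing to freely enlarge $\overline{A}$; the paper's route sidesteps that by working one direction at a time.
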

\begin{proof}
We assume that $\text{dim}(U)>0$ as the $0$-dimensional case is degenerate. Clearly it is enough to show the case when $a_i\in \mathbb{Z}$ because $\text{Supp}(\Delta)$ is a cone and scaling sends open sets to open sets. We can assume that both $\overline{D}$ and $\overline{E}$ are given by models $D_i$ and $E_i$ on projective models $X_i$ of $U$ respectively along with a boundary divisor $D_0$ and rationals $q_i\to 0$ as in our usual notation.
We first prove that for any rational $q\in \mathbb{Q}$ such that $\overline{D}+q\overline{E}$ is big, there is an $\epsilon>0$ such that $(1,x)$ is in $\text{Supp}(\Delta)$ for all $x\in(q-\epsilon,q+\epsilon)$. Suppose first that $q>0$. Then note that the sequence of models $S_j=(D_j-q_jD_0)+q(E_j-q_jD_0)$ gives a Cauchy sequence defining $\overline{D}+q\overline{E}$ and hence by Lemma \ref{lemma:bigmod} we get that $S_j$ is big for large enough $j$. Now due to the continuity of the volume function in the projective setting, we can find a rational $0<q<p$ such that $(D_j-q_jD_0)+p(E_j-q_jD_0)$ is big. Now due to the effectivity relation 
\[(D_j-q_jD_0)+p(E_j-q_jE_0)\le \overline{D}+p\overline{E}\]
we deduce that the right hand side above is big. Hence we get that for some positive integer $p_0$, $p_0\cdot(1,p)\in P(\Gamma)$ where $P\colon\mathbb{R}^d\times\mathbb{R}^2\rightarrow \mathbb{R}^2$ is the projection and $\Gamma=\Gamma(\overline{D})$. As $\overline{D}$ is assumed to be big, we also obtain that $r_0\cdot(1,0)\in P(\Gamma)$ as well for some large positive integer $r_0$. As $p_0$ and $r_0$ are positive, it is enough to find an $\epsilon>0$ such that $(1,x)$ is in the convex cone generated by $(1,0)$ and $(1,p)$ for all $x\in(q-\epsilon,q+\epsilon)$ because $\text{Supp}(\Delta)$ is exactly the convex cone generated by $P(\Gamma)$. But clearly $(1,x)$ is in the convex cone generated by $(1,0)$ and $(1,p)$ for all $0<x<p$ which clearly yields the existence of one such $\epsilon$ because $0<q<p$. For the case when $q<0$ we do a similar calculation but with $E_j-q_jD_0$ being replaced by $E_j+q_jD_0$. Finally for the case $q=0$ using similar arguments we can find a rational number $q_0$ such that all the three vectors $p_0\cdot(1,0)$, $p_0\cdot(1,-q_0)$ and $p_0\cdot(1,q_0)$ are in $P(\Gamma)$ for some large positive integer $p_0$. Hence by the above arguments we get that $(1,x)$ is in $\text{Supp}(\Delta)$ for $x\in(-q_0,q_0)$.\\
Next we take any $\vec{a}=(a_1,a_2)$ such that $a_1\overline{D}+a_2\overline{E}$ is big. First suppose $a_1\le 0$. It is easy to see that the sum of two big adelic divisors is again big. Hence adding $(-a_1)\overline{D}$ we conclude that $a_2\overline{E}$ is big. Since the trivial adelic divisor is not big, we conclude that $a_2\neq 0$. Then adding the big adelic divisor $-a_1\overline{D}$ we deduce that $\overline{E}$ (resp. -$\overline{E})$ is big if $a_2> 0$ (resp. $a_2<0)$. Hence in these two cases replacing $\overline{D}$ by $\overline{E}$ or $-\overline{E}$ we are reduced to the case when $a_1>0$ and hence we can assume WLOG that $a_1>0$. In that case scaling by $a_1$ we obtain that $\overline{D}+q\overline{E}$ is big for $q=\frac{a_2}{a_1}$ and by our considerations before we deduce that for some $\epsilon>0$, $(1,x)$ is in the convex cone generated by $P(\Gamma)$ for all $x\in(q-\epsilon,q+\epsilon)$. We assume that $a_2\ge 0$ and the argument for $a_2<0$ will just be the analogue by changing signs. Then for any $\kappa>0$ we have 
\[\frac{a_2-\kappa}{a_1+\kappa}\le\frac{a_2+t_2}{a_1+t_1}\le \frac{a_2+\kappa}{a_1-\kappa}\]
for all $t_1,t_2\in(-\kappa,+\kappa)$. Choose $\kappa>0$ so small that 
\[(\frac{a_2-\kappa}{a_1+\kappa},\frac{a_2+\kappa}{a_1-\kappa})\subset (q-\epsilon,q+\epsilon)\]
and $a_1\pm \kappa>0$
which we can do as $q=\frac{a_2}{a_1}$ and $a_1>0$. Hence by the choice of $\epsilon$ for any $t_1,t_2\in(-\kappa,\kappa)$, the vector $(1,\frac{a_2+t_2}{a_1+t_1})$ and hence $(a_1+t_1,a_2+t_2)$ is in the convex cone generated by $P(\Sigma)$ and hence in $\text{Supp}(\Delta)$ as $a_1+t_1>0$. This clearly shows that $(a_1,a_2)\in\text{int}(\text{Supp}(\Delta))$ and finishes the proof.
\end{proof}
Next to use Lemma \ref{lemma:interior} we have to prove that $\Gamma(U)$ generates a sub-group of finite index in $\mathbb{Z}^{d+2}$ which in particular guarantees that $\text{int}(\text{Supp}(\Delta(U))$ is non-empty. This is going to be the content of our next Lemma.
\begin{lemma}
\label{lemma:nonemptint}
The multi-graded semi-group $\Gamma(U)$ constructed in Definition \ref{def:globalbody} generates $\mathbb{Z}^{d+2}$ as a group.
\end{lemma}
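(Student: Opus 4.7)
The plan is to reduce the problem to Lemma \ref{lemma:fingen}(3) applied to $\overline{D}$ alone, which already produces vectors generating the \say{horizontal} subgroup $\mathbb{Z}^{d+1}\times\{0\}\subseteq\mathbb{Z}^{d+2}$ inside $\Gamma(U)$. To promote this to all of $\mathbb{Z}^{d+2}$, it suffices to exhibit a single vector in $\Gamma(U)$ whose last coordinate is $\pm 1$, i.e.\ a nonzero section of $H^0(U,a_1\overline{D}+\overline{E})$ for some $a_1\in\mathbb{Z}$. Hence the task splits into (i) importing the one-divisor result, and (ii) producing one nonzero section whose $a_2$-coordinate equals $1$.

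For step (i), observe that every section $s\in H^0(U,m\overline{D})$ yields a point $(\nu_x(s),m,0)\in\Gamma(U)$, so $\Gamma(U)$ contains the image of $\Gamma(\overline{D})$ under the inclusion $\mathbb{Z}^{d+1}\hookrightarrow \mathbb{Z}^{d+2}$, $(\alpha,m)\mapsto(\alpha,m,0)$. By Lemma \ref{lemma:fingen}(3) applied to the big adelic divisor $\overline{D}$, the image contains an explicit set of vectors (namely $(f_0,m_0,0)$, $(f_0+e_i,m_0,0)$ for $i=1,\dots,d$, and $(f_1,m_0+1,0)$) that generate $\mathbb{Z}^{d+1}\times\{0\}$ as a group.

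For step (ii), which is the genuine content of the lemma, the goal is to find a positive integer $a_1$ with $H^0(U,a_1\overline{D}+\overline{E})\neq\{0\}$; picking any nonzero section then produces a vector $(v,a_1,1)\in\Gamma(U)$, and together with the vectors from step (i) this clearly generates $\mathbb{Z}^{d+2}$. To construct such a section I would argue at the projective model level. Using Lemma \ref{lemma:bigmod}, fix $j$ large enough that $D_j-q_jD_0$ is a big $\mathbb{Q}$-divisor on $X_j$, and pick $k$ large enough to have the effectivity $E_k-q_kD_0\leq\overline{E}$ from the Cauchy condition defining $\overline{E}$. Passing to a common projective model $X$ dominating $X_j$ and $X_k$, the pullback of $D_j-q_jD_0$ remains big on $X$, so by continuity of the volume function on the finite-dimensional Néron--Severi space of $X$ (equivalently, by the openness of the big cone) the $\mathbb{Q}$-divisor $a_1(D_j-q_jD_0)+(E_k-q_kD_0)$ is big on $X$ for all sufficiently large $a_1$, hence admits a nonzero global section $s$. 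Restricting $s$ to $U$ and using the effectivity chain
\[
\operatorname{div}(s)+a_1\overline{D}+\overline{E}\;\geq\;\operatorname{div}(s)+a_1(D_j-q_jD_0)+(E_k-q_kD_0)\;\geq\;0
\]
in $\widehat{\operatorname{Div}}(U,K)$, we conclude that $s\in H^0(U,a_1\overline{D}+\overline{E})\setminus\{0\}$, furnishing the required vector with $a_2$-coordinate equal to $1$. The main obstacle is precisely this Kodaira-type step: one must take care that the projective bigness of the perturbed model divisor propagates correctly along birational pullbacks and translates into nonvanishing of an adelic space of sections, which is where the careful bookkeeping of Cauchy effectivity and of the boundary divisor $D_0$ enters.
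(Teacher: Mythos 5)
Your decomposition is correct in outline and genuinely a bit different from the paper's. You apply Lemma~\ref{lemma:fingen}(3) only to $\overline{D}$ to get the horizontal sub-lattice $\mathbb{Z}^{d+1}\times\{0\}$ inside $\Gamma(U)$, and then seek one extra lattice point with last coordinate $1$. The paper instead finds an integer $m$ with $m\overline{D}-\overline{E}$ big and applies Lemma~\ref{lemma:fingen}(3) to both $\overline{D}$ and $m\overline{D}-\overline{E}$, whose images generate $\mathbb{Z}^d\times\mathbb{Z}(1,0)$ and $\mathbb{Z}^d\times\mathbb{Z}(m,-1)$. Both decompositions do yield $\mathbb{Z}^{d+2}$, and yours asks for less in the second direction, but at the cost of having to produce the extra section by hand, which is where the argument breaks down.

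The gap is in step~(ii): bigness of a $\mathbb{Q}$-divisor $B'$ on a projective variety does \emph{not} imply $H^0(X,B')'\neq 0$; it only gives $H^0(X,mB')'\neq 0$ for some sufficiently large (or divisible) $m$. For a concrete failure: on an elliptic curve, $B'=\tfrac{3}{2}P-\tfrac{1}{2}Q$ has degree $1$, so it is ample and hence big, yet $H^0(X,B')'=H^0(X,\lfloor B'\rfloor)=H^0(X,P-Q)=0$ whenever $P\neq Q$. Thus from the bigness of $a_1(D_j-q_jD_0)+(E_k-q_kD_0)$ you only obtain, after rescaling, a nonzero element of $H^0(U,m(a_1\overline{D}+\overline{E}))$ for some $m\geq 1$, i.e.\ a vector $(v,ma_1,m)\in\Gamma(U)$; together with the horizontal sub-lattice this only generates $\mathbb{Z}^{d+1}\times m\mathbb{Z}$, not all of $\mathbb{Z}^{d+2}$. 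The fix is routine. Either choose $a_1$ divisible by the denominator of $D_j-q_jD_0$, so that $a_1(D_j-q_jD_0)$ is an integral big divisor, and apply Kodaira's lemma (Proposition~2.2.6 of \cite{lazarsfeld2017positivity}) to produce a section of $a_1(D_j-q_jD_0)+\lfloor E_k-q_kD_0\rfloor$; or observe that $a_1\overline{D}+\overline{E}$ is a big \emph{adelic} divisor and invoke Lemma~\ref{lemma:bigmod} (or Lemma~\ref{lemma:fingen}) to obtain sections of $r(a_1\overline{D}+\overline{E})$ for two consecutive values of $r$, whose corresponding lattice points differ by a vector with last coordinate $1$. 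The latter option is essentially what the paper's appeal to Lemma~\ref{lemma:fingen} accomplishes.
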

\begin{proof}
Arguing similarly as in the proof of Lemma \ref{lemma:biginter}, as $\overline{D}$ is big, we can find a positive integer $m$ such that $m\overline{D}-\overline{E}$ is big. On the other hand we already know that $\overline{D}$ is big. Note that the semi-groups $\Gamma(m\overline{D}-\overline{E})$ and $\Gamma(\overline{D})$ sit naturally as sub-semigroups of $\Gamma(U)$. Moreover from Lemma \ref{lemma:fingen} we deduce that $\Gamma(\overline{D})$ and $\Gamma(m\overline{D}-\overline{E})$ generate the sub-groups $\mathbb{Z}^d\times\mathbb{Z}\cdot(1,0)$ and $\mathbb{Z}^d\times\mathbb{Z}\cdot(m,-1)$. But the vectors $(1,0)$ and $(m,-1)$ generate $\mathbb{Z}^2$ which clearly shows that $\Gamma(U)$ generates $\mathbb{Z}^{d+2}$ as a group. 
\end{proof}
Finally we are ready to state and prove the main theorem of this section.
\begin{theorem}
\label{theorem:globalbody}
Suppose $\overline{D}$ and $\overline{E}$ be adelic divisors on a normal quasi-projective variety $U$ such that $\overline{D}$ is big. Then there exists a convex body $\Delta(U)=\Delta(U,\overline{D},\overline{E})\subset \mathbb{R}^{d+2}$ with the property that for any $\vec{a}=(a_1,a_2)\in\mathbb{Q}^2$ with $a_1\overline{D}+a_2\overline{E}$ big, we have \[\Delta(a_1\overline{D}+a_2\overline{E})=\Delta(U)\cap(\mathbb{R}^d\times\{\vec{a}\})\]
where $\Delta(a_1\overline{D}+a_2\overline{E})$ is the Okounkov body of $a_1\overline{D}+a_2\overline{E}$ as constructed in Definition \ref{def:okounkov}.
\end{theorem}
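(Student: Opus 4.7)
The plan is to combine the convex-geometric lemmas already established with the construction in Definition~\ref{def:globalbody}. We take $\Delta(U)=\Sigma(\Gamma(U))$ as defined there, so the existence of the global body is immediate; only the fiber identification requires work. Fix $\vec{a}=(a_1,a_2)\in\Q^2$ with $a_1\overline{D}+a_2\overline{E}$ big. I would first reduce to the case $\vec{a}\in\N^2$: since $\Delta(U)$ is a closed convex cone, its intersection with $\R^d\times\{\vec{a}\}$ scales linearly in $\vec{a}$, so after clearing denominators we may assume $\vec{a}\in\Z^2$; if some coordinate is negative, I would replace $\overline{D}$ by $-\overline{D}$ or $\overline{E}$ by $-\overline{E}$, which merely reflects the last two coordinates of both $\Gamma(U)$ and $\Delta(U)$ while preserving bigness of the (sign-adjusted) combination. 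The construction of the Okounkov body for a $\Q$-adelic divisor is defined by passing to integer multiples via Lemma~\ref{lemma:homogen}, so scaling $\vec{a}$ commutes with the slicing we wish to perform.

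Next I would verify the hypotheses of Lemma~\ref{lemma:interior}. Lemma~\ref{lemma:nonemptint} shows that $\Gamma(U)$ generates $\Z^{d+2}$, hence a sub-group of finite index, while Lemma~\ref{lemma:biginter} ensures that $\vec{a}$ (and therefore its positive integer multiples) lies in $\text{int}(\text{Supp}(\Delta(U)))$. Applying Lemma~\ref{lemma:interior} yields the central equality
\[
\Delta(U)\cap(\R^d\times\R\vec{a})\;=\;\Sigma(\Gamma(U)_{\N\vec{a}}).
\]

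Finally I would identify the slice of this cone at the specific level $\vec{a}$ with $\Delta(a_1\overline{D}+a_2\overline{E})$. Unwinding Definition~\ref{def:globalbody}, the elements of $\Gamma(U)_{\N\vec{a}}$ at level $m\vec{a}$ are exactly the vectors $(v(s),m\vec{a})$ for nonzero $s\in H^0(U,m(a_1\overline{D}+a_2\overline{E}))$. Hence, viewed inside $\N^d\times\N$, $\Gamma(U)_{\N\vec{a}}$ is canonically identified with the graded semi-group $\Gamma(a_1\overline{D}+a_2\overline{E})$, and intersecting its closed convex cone with the hyperplane corresponding to level $\vec{a}$ gives the closed convex hull of $\bigcup_m \frac{1}{m}\Gamma(a_1\overline{D}+a_2\overline{E})_m$, which is the Okounkov body $\Delta(a_1\overline{D}+a_2\overline{E})$ by Definition~\ref{def:okounkov}. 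The step I expect to require the most care is precisely this final matching together with the initial reduction: one must check that the sign-changes on $\overline{D},\overline{E}$ preserve both the fiber identification and the hypotheses of Lemma~\ref{lemma:interior}, and that the homogeneity of the Okounkov body for $\Q$-adelic divisors is compatible with slicing the cone at a non-primitive rational level. The remaining inputs are essentially invocations of Lemmas~\ref{lemma:interior}, \ref{lemma:biginter}, and \ref{lemma:nonemptint} as black boxes.
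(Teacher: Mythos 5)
Your overall approach matches the paper's almost line for line: take $\Delta(U)=\Sigma(\Gamma(U))$, verify the hypotheses of Lemma~\ref{lemma:interior} via Lemmas~\ref{lemma:nonemptint} and~\ref{lemma:biginter}, apply Lemma~\ref{lemma:interior} to identify $\Delta(U)_{\mathbb{R}\vec{a}}=\Sigma(\Gamma(U)_{\mathbb{N}\vec{a}})$, and then read off the Okounkov body of $a_1\overline{D}+a_2\overline{E}$ from the slice at level $\vec{a}$. That part is correct and is exactly the paper's proof.

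The one place where you deviate is the reduction to $\vec{a}\in\mathbb{N}^2$ by ``replacing $\overline{D}$ by $-\overline{D}$ or $\overline{E}$ by $-\overline{E}$,'' and this step is both unnecessary and, if pressed, problematic. The standing hypothesis of the theorem (and the input to Lemmas~\ref{lemma:biginter} and~\ref{lemma:nonemptint}) is that $\overline{D}$ is big; if you reflect $\overline{D}\mapsto-\overline{D}$ you no longer know that the first divisor in the pair is big, so you cannot simply re-run the construction and verify the hypotheses of Lemma~\ref{lemma:interior} for the reflected data. The paper sidesteps the issue entirely: it reduces only to $\vec{a}\in\mathbb{Z}^2$ (which is all that homogeneity gives) and applies Lemma~\ref{lemma:interior} for such $\vec{a}$ directly. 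Strictly speaking, Lemma~\ref{lemma:interior} is stated with $\vec{a}\in\mathbb{N}^2$, but the underlying result (Proposition~4.9 in Lazarsfeld--Mustață) only needs $\vec{a}$ to be a lattice point in the interior of the support, with no sign condition; since $\Gamma(U)$ in Definition~\ref{def:globalbody} already allows $a_i\in\mathbb{Z}$, this is the right level of generality. So the fix is simply to invoke Lemma~\ref{lemma:interior} for $\vec{a}\in\mathbb{Z}^2$, not to try to reduce to $\mathbb{N}^2$ by sign-flipping. Everything else in your argument is sound.
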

\begin{proof}
Clearly it is enough to show when $\vec{a}\in\mathbb{Z}^2$ by homogeneity of Okounkov bodies (Lemma \ref{lemma:homogen}). Note that the semi-group $\Gamma(a_1\overline{D}+a_2\overline{E})$ sits naturally in $\mathbb{N}^d\times \mathbb{N}\cdot\vec{a}\cong \mathbb{N}^{d+1}$ and by construction of $\Delta(\cdot)$ as in Definition \ref{def:okounkov}, we deduce that $\Delta(a_1\overline{D}+a_2\overline{E})=\Sigma(\Gamma(U)_{\mathbb{N}\vec{a}})\cap(\mathbb{R}^d\times\{\vec{a}\})$. By Lemma \ref{lemma:biginter} we get that $\vec{a}\in\text{int}(\text{Supp}(\Delta(U))$ and hence by Lemma \ref{lemma:interior} we have $\Delta(U)_{\mathbb{R}\vec{a}}=\Sigma(\Gamma(U)_{\mathbb{N}\vec{a}})$. Hence we deduce that 
\[\Delta(a_1\overline{D}+a_2\overline{E})=\Sigma(\Gamma(U)_{\mathbb{N}\vec{a}})\cap(\mathbb{R}^d\times\{\vec{a}\})=\Delta(U)_{\mathbb{R}\vec{a}}\times(\mathbb{R}^d\times\{\vec{a}\})=\Delta(U)\cap(\mathbb{R}^d\times\{\vec{a}\})\]
concluding the proof.
\end{proof}
\begin{remark} 
The construction of the Global body $\Delta(U,\overline{D},\overline{E})$ is done here by mimicking the constructions in section 4 of \cite{lazarsfeld2008convex}. However one stark difference is that the Global body constructed in \cite{lazarsfeld2008convex} is independent of the chosen basis of the Neron-Severi group because they work modulo numerical equivalences. However even if there can be a notion of \say{numerical equivalence} in the adelic setting,it is certainly not known if the corresponding Neron-Severi space is finitely generated and hence such a \say{canonical global body} cannot be constructed using similar methods and our $\Delta(U,\overline{D},\overline{E})$ is dependent on the chosen divisors $\overline{D}$ and $\overline{E}$. However our version still gives some interesting corollaries which we shall see next.
\end{remark}
\subsection{Corollaries : Continuity, Fujita approximation and more}
Before going to state our first corollary, we introduce the notion of \emph{Hausdorff distance} which will be the correct metric under which we want to show the convergence of bodies.
\begin{definition}
\label{def:minkdist}
Let $(V,\|\cdot\|)$ be a normed real vector space.
The \emph{Hausdorff distance} between two closed compact subsets $C_1$ and $C_2$ in $V$ is defined as
\[d_H(C_1,C_2)=\text{inf}\{\epsilon>0\mid C_1\subseteq C_2+\epsilon\mathbb{B}, C_2\subseteq C_1+\epsilon\mathbb{B}\}
\]
where $\mathbb{B}$ is the unit ball in $V$ with respect to $||\cdot||$.
\end{definition}
Now we can state our first main corollary.
\begin{corollary}
\label{corol:coonvbod}
Suppose $\overline{D}$ is a big adelic divisor on a normal quasi-projective variety $U$ given by models $\{X_i,D_i\}$ in our usual notation. Then 
\[\lim_{j\to\infty}d_H(\Delta(\overline{D}),\Delta(\overline{D_j}))=0\] where $\overline{D_j}$ is just $D_j$ looked at as a model divisor in $\emph{Div}(U,k)_{\emph{mod}}$. In particular, we have 
\[\widehat{\emph{vol}}(\overline{D})=\lim_{j\to\infty}\emph{vol}(D_j)\]
where $\emph{vol}(\cdot)$ is the classical projective volume considering $D_j$ as a $\mathbb{Q}$-divisor in $X_j$.
\end{corollary}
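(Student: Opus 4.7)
The plan is to anchor everything to the global body $\Delta(U)=\Delta(U,\overline{D},\overline{D_0})$ furnished by Theorem~\ref{theorem:globalbody}, viewing the boundary divisor $D_0$ as a model adelic divisor on $U$. Since $\overline{D}$ is big, Lemma~\ref{lemma:biginter} places $(1,0)$ in the interior of $\text{Supp}(\Delta(U))$, and since this interior is open, the vectors $(1,\pm q_j)$ also lie in it for all sufficiently large $j$. A short direct verification (combining the bigness of $D_j-2q_jD_0$ for large $j$ from Lemma~\ref{lemma:bigmod} with the Cauchy inequalities) confirms that $\overline{D}\pm q_j\overline{D_0}$ are in fact big adelic $\mathbb{Q}$-divisors, so by Theorem~\ref{theorem:globalbody} their Okounkov bodies coincide with the corresponding fibers of $\Delta(U)$.

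Next I would exploit the Cauchy condition~\eqref{eq:cauchy} which, after passing to a common projective model and using normality of $U$, yields the effectivity sandwich
\[
\overline{D}-q_j\overline{D_0}\le \overline{D_j}\le \overline{D}+q_j\overline{D_0}\,.
\]
Together with the trivial sandwich of $\overline{D}$ itself, the semigroup inclusions of Lemma~\ref{lemma:easy} imply that both $\Delta(\overline{D})$ and $\Delta(\overline{D_j})$ are trapped between $\Delta(\overline{D}-q_j\overline{D_0})$ and $\Delta(\overline{D}+q_j\overline{D_0})$. An elementary check shows that when two compact convex sets $B,C$ are both sandwiched between $A$ and $D$ with $A\subseteq D$, one has $d_H(B,C)\le d_H(A,D)$. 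Hence $d_H(\Delta(\overline{D}),\Delta(\overline{D_j}))$ is dominated by the Hausdorff distance between the two extreme fibers of $\Delta(U)$.

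The third step is to invoke the standard convex-geometric fact (used in Section~4 of \cite{lazarsfeld2008convex}) that the fiber of a closed convex set above an interior point of its projection to the parameter space varies continuously in Hausdorff metric. Applied to the path $(1,\pm q_j)\to(1,0)$ inside $\text{int}(\text{Supp}(\Delta(U)))$, this yields $d_H\bigl(\Delta(\overline{D}\pm q_j\overline{D_0}),\Delta(\overline{D})\bigr)\to 0$, and hence $d_H(\Delta(\overline{D}),\Delta(\overline{D_j}))\to 0$. For the volume half of the statement, Hausdorff convergence of compact convex bodies in $\mathbb{R}^d$ forces convergence of Lebesgue volumes; combining this with Theorem~\ref{thm:okoun} (equating Euclidean and adelic volumes up to the factor $d!$) and the identification $\widehat{\text{vol}}(\overline{D_j})=\text{vol}(D_j)$ for model divisors on normal $U$ (from the canonical identification of the two notions of effective sections recalled in Section~1.3), we obtain the desired $\widehat{\text{vol}}(\overline{D})=\lim_j\text{vol}(D_j)$.

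The main obstacle I anticipate is a rigorous justification of the Hausdorff continuity of fibers of $\Delta(U)$. This reduces to showing that $\Delta(U)$ is locally bounded above the interior of its support, which I plan to extract by combining the compactness of individual Okounkov bodies (Lemma~\ref{lemma:boundadel}) with the sandwich-type inclusions above to produce a uniform bound on a neighbourhood of $(1,0)$; alternatively, a Blaschke-type selection argument applied to the compact fibers yields the same continuity statement abstractly.
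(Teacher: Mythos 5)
Your proposal is correct and follows essentially the same route as the paper: sandwich $\Delta(\overline{D})$ and $\Delta(\overline{D_j})$ between $\Delta(\overline{D}\pm q_j\overline{D_0})$, then apply Theorem~\ref{theorem:globalbody} with $\overline{E}=\overline{D_0}$ together with continuity of fibers of the global body near an interior support point (the paper cites Theorem~13 of Khovanskii--Timorin for exactly the step you anticipate as the main obstacle), and finally pass from Hausdorff convergence to volume convergence via Theorem~\ref{thm:okoun}. The only small wrinkle is your citation of Lemma~\ref{lemma:bigmod} for bigness of $D_j-2q_jD_0$: as stated that lemma gives $D_j-q_jD_0$ big, but the same Siu-inequality estimate in its proof yields $D_j-2q_jD_0$ big for $j\gg 0$ (or one applies the lemma to the Cauchy sequence $\{X_j,D_j-q_jD_0\}$), so your verification of bigness of $\overline{D}-q_j\overline{D_0}$ is sound once that is adjusted; you are in fact more explicit than the paper here, which leaves this point implicit.
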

\begin{proof}
We prove the first claim at first. Begin by noting that the sequence of inclusions 
\[\Delta(\overline{D}-q_j\overline{D_0})\subseteq \Delta(\overline{D_j})\subseteq \Delta(\overline{D}+q_j\overline{D_0})\]
implies that it is enough to show that $d_H(\Delta(\overline{D}-q_jD_0),\Delta(\overline{D}+q_jD_0))\to 0$ as $j\to\infty$. But this immediately follows from Theorem  \ref{theorem:globalbody} taking $\overline{E}=\overline{D_0}$ and Theorem 13 in \cite{Khovanskii2012-sb} noting that $q_j\to 0$ as $j\to\infty$. Now the second claim follows readily from Theorem 7 in \cite{Shephard1965MetricsFS} and the first claim noting that $\text{vol}(D_j)=\widehat{\text{vol}}(\overline{D_j})=d!\cdot \text{vol}(\Delta(\overline{D_j}))$ and $\text{vol}(\overline{D})=d!\cdot \text{vol}(\Delta(\overline{D}))$.
\end{proof}
\begin{remark}
Note that Corollary \ref{corol:coonvbod} and Theorem \ref{thm:okoun} prove Theorem 5.2.1 of \cite{yuan2021adelic} for big adelic divisors independently using convex geometric methods and hence we can deduce all the corollaries of section 5 of \cite{yuan2021adelic} coming from Theorem 5.2.1 for bid divisors which we list next.
\end{remark}
\begin{corollary}[log-concavity]
\label{corollary:cont}
Suppose $\overline{D}_1$ and $\overline{D}_2$ are two effective adelic divisors on a normal quasi-projective variety $U$. Then we have 
\[\widehat{\emph{vol}}(\overline{D}_1+\overline{D}_2)^{\frac{1}{d}}\ge\widehat{\emph{vol}}(\overline{D}_1)^{\frac{1}{d}}+\widehat{\emph{vol}}(\overline{D}_2)^{\frac{1}{d}}\]
where $d=\emph{dim}(U).$

\end{corollary}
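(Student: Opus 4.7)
The plan is to reduce the statement to the classical Brunn--Minkowski inequality applied to the Okounkov bodies constructed in Section 1.4. First I would dispose of the degenerate case where at least one of the divisors, say $\overline{D}_1$, is not big. Since $\overline{D}_1$ is effective, the constant function $1$ satisfies $\text{div}(1)+\overline{D}_1=\overline{D}_1\geq 0$, so $1\in H^0(U,\overline{D}_1)$; multiplication by $1^m$ then yields inclusions $H^0(U,m\overline{D}_2)\hookrightarrow H^0(U,m(\overline{D}_1+\overline{D}_2))$ for every $m\in\mathbb{N}$. Taking the $\limsup$ gives $\widehat{\text{vol}}(\overline{D}_1+\overline{D}_2)\geq\widehat{\text{vol}}(\overline{D}_2)$, which together with $\widehat{\text{vol}}(\overline{D}_1)=0$ implies the inequality. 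Hence I may assume both $\overline{D}_i$ are big, in which case $\overline{D}_1+\overline{D}_2$ is big by the same argument.

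Next I would establish the Minkowski-sum inclusion
\[
\Delta(\overline{D}_1)+\Delta(\overline{D}_2)\subseteq\Delta(\overline{D}_1+\overline{D}_2).
\]
The key input is that the flag valuation $\nu_x$ is multiplicative: for non-zero $f,g$ in the completion $\widehat{O_{U,x}}\cong K[[x_1,\ldots,x_d]]$ one has $\nu_x(fg)=\nu_x(f)+\nu_x(g)$, because the lexicographic order on $\mathbb{N}^d$ is translation-invariant and the local ring is a domain, so the lex-leading exponents of power series multiply. Given non-zero sections $s_i\in H^0(U,m\overline{D}_i)$, the product $s_1s_2\in\kappa(U)^\times$ satisfies
\[
\text{div}(s_1s_2)+m(\overline{D}_1+\overline{D}_2)=(\text{div}(s_1)+m\overline{D}_1)+(\text{div}(s_2)+m\overline{D}_2)\geq 0,
\]
so it lies in $H^0(U,m(\overline{D}_1+\overline{D}_2))$, with valuation vector $\nu_x(s_1)+\nu_x(s_2)$. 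This gives $\Gamma(\overline{D}_1)_m+\Gamma(\overline{D}_2)_m\subseteq\Gamma(\overline{D}_1+\overline{D}_2)_m$ for every $m$; after rescaling by $\frac{1}{m}$ and taking closed convex hulls, the claimed inclusion of Okounkov bodies follows.

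Finally, the classical Brunn--Minkowski inequality for compact convex subsets of $\mathbb{R}^d$ yields
\[
\text{vol}_{\mathbb{R}^d}(\Delta(\overline{D}_1)+\Delta(\overline{D}_2))^{1/d}\geq\text{vol}_{\mathbb{R}^d}(\Delta(\overline{D}_1))^{1/d}+\text{vol}_{\mathbb{R}^d}(\Delta(\overline{D}_2))^{1/d},
\]
and combining this with the inclusion above and Theorem~\ref{thm:okoun}, which identifies $\widehat{\text{vol}}(\overline{D})$ with $d!\cdot\text{vol}_{\mathbb{R}^d}(\Delta(\overline{D}))$, then multiplying through by $(d!)^{1/d}$ produces the stated inequality. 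The only real subtlety I anticipate is the multiplicativity of $\nu_x$, but this is standard for the lex leading-exponent functional on the regular local ring $\widehat{O_{U,x}}$ and is already implicit in the construction of the Okounkov semi-group.
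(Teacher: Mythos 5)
Your proposal is correct, and it takes a genuinely different route from the one the paper uses. The paper disposes of the non-big case in one line and then, for big $\overline{D}_1,\overline{D}_2$, invokes Corollary~\ref{corol:coonvbod} (Hausdorff convergence of adelic Okounkov bodies to the Okounkov bodies of the projective models) to reduce log-concavity to the known projective statement, Corollary~4.12 of Lazarsfeld--Musta\c{t}\u{a}. You instead prove the adelic inequality directly: the additivity $\nu_x(s_1 s_2)=\nu_x(s_1)+\nu_x(s_2)$ of the flag valuation gives a containment of graded semigroups, hence a Minkowski-sum inclusion $\Delta(\overline{D}_1)+\Delta(\overline{D}_2)\subseteq\Delta(\overline{D}_1+\overline{D}_2)$, and Brunn--Minkowski together with Theorem~\ref{thm:okoun} finishes it. Your approach is more self-contained (it bypasses the global-body and Hausdorff-convergence machinery entirely) and is in fact the argument Lazarsfeld--Musta\c{t}\u{a} themselves give for the projective case; the paper's route is shorter given that Corollary~\ref{corol:coonvbod} is already in hand. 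One small point to tighten in your write-up: the inclusion of graded pieces $\Gamma(\overline{D}_1)_m+\Gamma(\overline{D}_2)_m\subseteq\Gamma(\overline{D}_1+\overline{D}_2)_m$ only compares equal degrees, whereas the Minkowski sum of the two Okounkov bodies mixes limits coming from different degrees $m\neq n$; one should first pass to a common degree (e.g.\ $s_1^n\otimes s_2^m$ in degree $mn$, giving $n\gamma_1+m\gamma_2\in\Gamma(\overline{D}_1+\overline{D}_2)_{mn}$, so $\tfrac{1}{m}\gamma_1+\tfrac{1}{n}\gamma_2\in\Delta(\overline{D}_1+\overline{D}_2)$) before taking closed convex hulls. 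With that clarification the argument is complete.
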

\begin{proof}
The statement is trivial if one of the divisors is not big. When both of them are big, applying Corollary \ref{corol:coonvbod} the problem gets converted into the projective case which is proved in Corollary 4.12 in \cite{lazarsfeld2008convex}.
\end{proof}
\begin{corollary}[Fujita approximation]
\label{corollary:Fujita}
Suppose $\overline{D}$ is a big adelic $\mathbb{Q}$-divisor on a normal quasi-projective variety $U$. Then for any $\epsilon>0$ there exists a normal quasi-projective variety $U'$, a birational morphism $\pi\colon U'\rightarrow U$, a projective model $X'$ of $U'$ and an ample $\mathbb{Q}$-divisor $A'$ on $X'$ such that $\pi^*\overline{D}-A'\ge 0$ in $\widehat{\emph{Div}}(U,K)$ and 
\[\emph{vol}(A')\ge\widehat{\emph{vol}}(\overline{D})-\epsilon\]
where $\emph{vol}(A')$ is the volume of $A'$ as a divisor on $X'$.

\end{corollary}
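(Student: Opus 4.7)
My plan is to reduce the adelic Fujita statement to the classical Fujita approximation on a sufficiently good projective model $X_j$ appearing in the data of $\overline{D}$, and then transfer the resulting ample $\mathbb{Q}$-divisor back to the adelic setting via the Cauchy condition.

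First I would fix representing data $\{X_i,D_i\}$, Cauchy constants $q_i\to 0$, and boundary divisor $D_0$ for $\overline{D}$. By Lemma~\ref{lemma:bigmod} the $\mathbb{Q}$-divisor $D_j-q_jD_0$ is big on the projective variety $X_j$ for all $j$ sufficiently large. Moreover, the intermediate volume estimate in the proof of Lemma~\ref{lemma:bigmod} (or, more efficiently, Corollary~\ref{corol:coonvbod} applied to the model divisors $\overline{D_j-q_jD_0}$) shows that $\text{vol}(D_j-q_jD_0)\to\widehat{\text{vol}}(\overline{D})$ as $j\to\infty$. I would therefore pick $j$ large enough that
\[
\text{vol}(D_j-q_jD_0)\ \ge\ \widehat{\text{vol}}(\overline{D})-\epsilon/2.
\]

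Next I would invoke the classical Fujita approximation for big $\mathbb{Q}$-divisors on normal projective varieties (as in \cite{Fujita1994ApproximatingZD}) applied to $D_j-q_jD_0$ on $X_j$. This produces a projective birational morphism $\pi\colon X'\to X_j$ with $X'$ normal, together with an ample $\mathbb{Q}$-divisor $A'$ on $X'$ satisfying
\[
\pi^*(D_j-q_jD_0)\ \ge\ A'\qquad\text{and}\qquad \text{vol}(A')\ \ge\ \text{vol}(D_j-q_jD_0)-\epsilon/2.
\]
Setting $U'=\pi^{-1}(U)$ gives a normal quasi-projective variety birational to $U$ via $\pi|_{U'}$, with $X'$ as a projective model of $U'$. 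The numerical bound $\text{vol}(A')\ge\widehat{\text{vol}}(\overline{D})-\epsilon$ then follows at once from the two estimates above.

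The remaining step is to verify the adelic inequality $\pi^*\overline{D}-A'\ge 0$ in $\widehat{\text{Div}}(U',K)$. I would chain two inequalities: the Cauchy condition~\eqref{eq:cauchy} gives $D_i\ge D_j-q_jD_0$ for all $i\ge j$, so $\overline{D}\ge D_j-q_jD_0$ as adelic divisors on $U$; applying adelic pullback via $\pi$ (passing to common dominating models over $X'$ and the $X_i$) yields $\pi^*\overline{D}\ge \pi^*(D_j-q_jD_0)$ in $\widehat{\text{Div}}(U',K)$; and the Fujita output $\pi^*(D_j-q_jD_0)\ge A'$ on $X'$ promotes to the same inequality between model adelic divisors on $U'$. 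Composing gives the required inequality. The main obstacle I anticipate is strictly bookkeeping: one must check that pullback along a birational morphism of projective models is well-defined at the level of adelic divisors and preserves both the Cauchy structure and effectivity. This is essentially formal once one unwinds the inverse-system definition of $\widehat{\text{Div}}$ and chooses appropriate common refinements, but it is the one place where genuinely adelic (as opposed to projective) arguments are needed.
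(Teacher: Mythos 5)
Your proposal takes essentially the same route as the paper: reduce to classical Fujita approximation on a projective model, using the volume convergence $\text{vol}(D_j)\to\widehat{\text{vol}}(\overline{D})$ from Corollary~\ref{corol:coonvbod} (equivalently via the estimates in Lemma~\ref{lemma:bigmod}). You in fact fill in a point the paper's one-line proof leaves implicit, namely that one should apply Fujita to the lower approximant $D_j-q_jD_0$ rather than to $D_j$, precisely so that the Cauchy effectivity relation $\overline{D}\ge D_j-q_jD_0$ can be pulled back to yield $\pi^*\overline{D}-A'\ge 0$.
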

\begin{proof}
Using the fact that the adelic volume is the limit of its models in Corollary \ref{corol:coonvbod}, the claim gets reduced to the original Fujita approximation which was proved in \cite{Fujita1994ApproximatingZD}.
\end{proof}
Next we come to the final corollary of this section which shows the continuity of the volume function.
\begin{corollary}[continuity]
\label{corol:cont}
Suppose $\overline{D},\overline{M}_1,\ldots \overline{M}_r$ are adelic $\mathbb{Q}$-divisors on a normal quasi-projective variety $U$. Then we have 
\[\lim_{t_1,t_2\ldots t_r\to 0}\widehat{\emph{vol}}(\overline{D}+t_1\overline{M}_1+\ldots t_r\overline{M}_r)=\widehat{\emph{vol}}(\overline{D})\]
where $t_1\ldots t_r$ are rational numbers converging to 0. Furthermore we have $\widehat{\emph{vol}}(\overline{D})=\lim_{j\to\infty}\emph{vol}(D_j)$ for a sequence of model $D_j$ representing $\overline{D}$.
\end{corollary}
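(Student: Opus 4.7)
The second assertion is exactly Corollary \ref{corol:coonvbod}, so the main task is to establish continuity of $\widehat{\text{vol}}$ at $\overline{D}$ under small rational perturbations. By clearing a common denominator and invoking the homogeneity of $\widehat{\text{vol}}$ on big adelic $\mathbb{Q}$-divisors (Lemma \ref{lemma:homogen} and the remark following it), I would first reduce to the case where the perturbing coefficients $t_i$ are rationals whose denominators are uniformly bounded. I then split into the cases where $\overline{D}$ is big and where $\overline{D}$ is not big.

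The big case is the heart of the argument. The plan is to extend Theorem \ref{theorem:globalbody} to the collection $\overline{D}, \overline{M}_1, \ldots, \overline{M}_r$: the construction of the multi-graded semi-group in Definition \ref{def:globalbody} and the supporting Lemmas \ref{lemma:interior}, \ref{lemma:biginter}, and \ref{lemma:nonemptint} adapt verbatim to $r+1$ adelic divisors and produce a compact convex body $\Delta(U) \subseteq \mathbb{R}^{d+r+1}$ whose fiber over any rational vector $\vec{a}$ in the interior of $\text{Supp}(\Delta(U))$ is the Okounkov body of the corresponding combination. Since $\overline{D}$ is big, the extension of Lemma \ref{lemma:biginter} places $(1,0,\ldots,0)$ in this interior, so for $\vec{t}_n \to 0$ small enough the combinations $\overline{D}_{\vec{t}_n} := \overline{D} + \sum_i t_{n,i}\overline{M}_i$ are big and their Okounkov bodies are precisely the fibers of $\Delta(U)$ over $(1,\vec{t}_n)$. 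These fibers converge to $\Delta(\overline{D})$ in Hausdorff distance since they are fibers of a fixed compact convex body (Khovanskii's Theorem 13, already invoked in Corollary \ref{corol:coonvbod}), and Lebesgue volume depends continuously on compact convex sets in Hausdorff metric (Shephard's Theorem 7, likewise invoked there). Combining with Theorem \ref{thm:okoun} yields the claimed continuity at $\overline{D}$.

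For the non-big case, $\widehat{\text{vol}}(\overline{D}) = 0$ and I only need upper semicontinuity at $\overline{D}$. I would argue by contradiction: if $\widehat{\text{vol}}(\overline{D}_{\vec{t}_n}) \geq c > 0$ along some $\vec{t}_n \to 0$, then each $\overline{D}_{\vec{t}_n}$ is big. The plan is to pass to projective models using Corollary \ref{corol:coonvbod}, so that each $\widehat{\text{vol}}(\overline{D}_{\pm\vec{t}_n})$ is approximated by the projective volumes $\text{vol}(D_{j,\pm\vec{t}_n})$ on a common model $X_j$ (taking simultaneous models of $\overline{D}$ and each $\overline{M}_i$), and then apply the projective Brunn--Minkowski inequality on the big cone of $X_j$ (which holds without any effectivity assumption) to the average $D_j = \tfrac12(D_{j,\vec{t}_n} + D_{j,-\vec{t}_n})$ to bound $\widehat{\text{vol}}(\overline{D})$ from below by a positive constant, contradicting $\widehat{\text{vol}}(\overline{D}) = 0$.

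The main obstacle is precisely this non-big case: Corollary \ref{corollary:cont} gives log-concavity only for \emph{effective} adelic divisors, whereas the contradiction above needs a Brunn--Minkowski inequality for arbitrary big combinations, and in particular for $\overline{D}_{-\vec{t}_n}$ which need not be effective even when $\overline{D}_{\vec{t}_n}$ is. Reducing to projective models via Corollary \ref{corol:coonvbod} bypasses the effectivity restriction since the projective Brunn--Minkowski applies to all big classes, but requires careful choice of a common model $X_j$ accommodating all of $\overline{D}, \overline{M}_1,\ldots,\overline{M}_r$ simultaneously, and a uniform control of the model-limit as $n$ varies; in addition one must verify that $D_{j,\pm\vec{t}_n}$ remain big on $X_j$ for $j$ large, which itself uses Lemma \ref{lemma:bigmod}. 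The big case, by contrast, is a routine adaptation of the projective global Okounkov body argument and presents no essential new difficulty.
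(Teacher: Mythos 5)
Your treatment of the big case is correct in outline: extending the global body of Theorem~\ref{theorem:globalbody} to $r+1$ directions does work, and combined with Khovanskii's and Shephard's results gives continuity. The paper takes a slightly slicker route here: it first picks nef model adelic divisors $\overline{M}_i'$ with $\overline{M}_i'\pm\overline{M}_i\ge 0$, sets $\overline{M}=\sum\overline{M}_i'$, and uses the sandwich $\overline{D}-t\overline{M}\le\overline{D}+\sum t_i\overline{M}_i\le\overline{D}+t\overline{M}$ to reduce to a single direction, so only the two-divisor global body of Theorem~\ref{theorem:globalbody} is needed. Both approaches are fine.

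The non-big case is where your proposal breaks down, and the issue is not merely the ``careful choice of a common model'' that you flag as an obstacle --- it is structural. You want to apply projective Brunn--Minkowski to $D_{j,\vec{t}_n}$ and $D_{j,-\vec{t}_n}$, but this inequality gives a nontrivial lower bound on $\text{vol}(D_j)$ only when \emph{both} summands are big (or at least pseudo-effective). If $\overline{D}$ is not big and $\overline{D}_{\vec{t}_n}=\overline{D}+\sum t_{n,i}\overline{M}_i$ is big, then $\overline{D}_{-\vec{t}_n}$ is forced to be non-big: otherwise $\overline{D}=\tfrac12(\overline{D}_{\vec{t}_n}+\overline{D}_{-\vec{t}_n})$ would itself be big, contradicting the case hypothesis. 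So $\text{vol}(D_{j,-\vec{t}_n})=0$ for all $j$, Brunn--Minkowski degenerates to the trivially false statement $\text{vol}(D_j)^{1/d}\ge\text{vol}(D_{j,\vec{t}_n})^{1/d}$, and there is no contradiction to extract. Lemma~\ref{lemma:bigmod} cannot rescue this since it only applies to big adelic divisors, which $\overline{D}_{-\vec{t}_n}$ is not.

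The paper avoids this entirely by a different mechanism: it applies Fujita approximation (Corollary~\ref{corollary:Fujita}) to the big divisor $\overline{D}+t_i\overline{M}$ to extract an ample $A_{t_i}$ with $\pi^*(\overline{D}+t_i\overline{M})-A_{t_i}\ge0$ and $\text{vol}(A_{t_i})>c/2$, so that $\pi^*\overline{D}\ge A_{t_i}-t_i\pi^*\overline{M}$, and then uses Siu's inequality for nef classes, $\text{vol}(A_{t_i}-t_i\overline{M})\ge A_{t_i}^d-dt_i\,A_{t_i}^{d-1}\cdot\overline{M}$, together with a uniform bound on $A_{t_i}^{d-1}\cdot\overline{M}$ coming from the proof of Theorem~5.2.8 of Yuan--Zhang. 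The crucial point is that Siu's bound is one-sided and does not require $A_{t_i}-t_i\overline{M}$ to be big, which is exactly what your Brunn--Minkowski approach cannot deliver. Finally, a smaller gap: you dispatch the second assertion with ``exactly Corollary~\ref{corol:coonvbod},'' but that corollary is stated only for big $\overline{D}$; when $\overline{D}$ is not big the paper argues separately that $\text{vol}(D_j)\le\widehat{\text{vol}}(\overline{D}+q_j\overline{D}_0)\to 0$ as a consequence of the first assertion.
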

\begin{proof}
As in the proof of Theorem 5.2.8 in \cite{yuan2021adelic}, we choose nef model adelic divisors $\overline{M}_i'$ such that $\overline{M}_i'\pm \overline{M}_i\ge 0$ and we set $\overline{M}=\overline{M}_1'+\ldots \overline{M}_r'$. Then it is enough to show that 
\[\lim_{t\to 0}\widehat{\text{vol}}(\overline{D}+t\overline{M})=\widehat{\text{vol}}(\overline{D})\]
as $t$ converges to $0$ over the rationals. First assume that $\overline{D}$ is big. Then from Theorem 13 of \cite{Khovanskii2012-sb} we get 
\[\lim_{t\to 0}d_H(\Delta(\overline{D}+t\overline{M}),\Delta(\overline{D}))=0\]
by taking $\overline{E}=\overline{M}$ in Theorem \ref{theorem:globalbody} since we saw in the proof of Lemma \ref{lemma:biginter} that $\overline{D}+t\overline{M}$ is big for small enough $t$ whenever $\overline{D}$ is big. Now the claim follows  from  Theorem 7 of \cite{Shephard1965MetricsFS}. The second claim is also true when $\overline{D}$ is big thanks to Corollary \ref{corol:coonvbod}. Hence we can assume that $\overline{D}$ is not big. Now suppose the claim does not hold. Then there is a $c>0$ and a sequence of rationals $t_i\to 0$ such that $\widehat{\text{vol}}(\overline{D}+t_i\overline{M})>c$ for all $t_i$. By Corollary \ref{corollary:Fujita} we can choose an ample $\mathbb{Q}$-divisor $A_{t_i}$ on a projective model $X'$ of a birational modification $\pi\colon U'\rightarrow U$ of $U$ such that $\pi^*(\overline{D}+t_i\overline{M})-A_{t_i}\ge 0$ and $\text{vol}(A_{t_i})>c/2$. Then clearly 
\[\widehat{\text{vol}}(\overline{D})\ge \text{vol}(A_{t_i}-t_i\overline{M})\ge A_{t_i}^d-dt_iA_{t_i}^{d-1}\overline{M}\]
where in the second inequality we used the Siu's criterion for model nef divisors $A_{ti}$ and $\overline{M}$. We can bound the intersection number $A_{t_i}^{d-1}\overline{M}$ as in the proof of Theorem 5.2.8 in \cite{yuan2021adelic} to conclude that 
\[\widehat{\text{vol}}(\overline{D})\ge A_{t_i}^d-O(t_i)>c/2-O(t_i)\ \text{as}\ t_i\to 0\]
which clearly contradicts the hypothesis $\widehat{\text{vol}}(\overline{D})=0$ and finishes the proof of the first claim. Furthermore the effectivity relation $\overline{D}_j\le \overline{D}+q_j\overline{D}_0$ shows that $\text{vol}(D_j)\le \widehat{\text{vol}}(\overline{D}+q_j\overline{D}_0)$. Now as $j\to\infty$ we know that $q_j\to 0$ and hence by the first claim $\lim_{j\to\infty}\widehat{\text{vol}}(\overline{D}+q_j\overline{D}_0)=0$ which clearly shows the second claim.
\end{proof}
\section{Augmented base loci and Restricted volumes}
In this chapter we define $\emph{restricted}$ volumes of adelic divisors along a closed sub-variety of a normal quasi-projective variety $U$ over an algebraically closed field $K$. We will define the notion of \emph{augmented base locus} of an adelic divisor. It turns out that the restricted volume can be realised as the volume of an Okounkov body when the the sub-variety is not contained in augmented base locus of the adelic divisor. As a corollary we will deduce that the $\limsup$ defining the restricted volume is actually a limit analogously as in chapter 1 for ordinary volumes. We go on to show that there are global bodies which regulate the variation of restricted volumes along arbitary directions similarly as to ordinary volumes as in chapter 1. Finally as a corollary we will deduce properties analogous to those obtained in chapter one for ordinary volumes.
\subsection{Augmented base locus of an adelic divisor}
In this section, we recall the concepts of base loci and stable base loci of a graded linear series of an adelic line bundle. Using these concepts we introduce the notion of the \emph{augmented base locus} of an adelic divisor $\overline{D}$ in analogy to the projective setting (see \cite{lazarsfeld2008convex} section 2.4). In the projective setting, it is shown that the definition of augmented base locus is independent of the choice of the ample divisor using Serre's finiteness. However as in our setting, model divisors are only defined upto bi-rational pull-backs and ampleness is not preserved under such pull-backs, Serre's finiteness does not work. It turns out that this gap can be fixed using the main theorem due to \cite{birkar} and provides us with a similar independence of choice which will be the main result of this section.
\begin{definition}
\label{def:baseloci}
Suppose $U$ is a normal quasi-projective variety over an algebraically closed field $K$ and suppose $D$ is a divisor. Furthermore suppose $W\subseteq H^0(U,O(D))=H^0(U,D)$ is a finite dimensional sub-space of the space of global sections of $O(D)$. Then we define the \emph{base locus} 
\[\emph{Bs}(W)=\{p\in U\mid s(p)=0\ \text{in}\ \kappa(p)=O_{U,p}/m_{U,p}\ \emph{for all}\ s\in W\}\]
Now suppose we have a graded linear series $W=\{W_m\}$ of $O(D)$. We define the \emph{stable base locus} as 
\[\emph{SB}(W)=\cap_{m\in \mathbb{N}}\emph{Bs}(W_m)\]
Finally suppose $\overline{D}$ is an adelic divisor on $U$. Then it determines graded linear series $W=\{W_m=H^0(U,m\overline{D})\}$ as explained in the beginning of chapter 1. Then we define the \emph{base locus} and \emph{stable base locus} of $\overline{D}$ as 
\[\emph{Bs}(\overline{D})=\emph{Bs}(W_1)\ \emph{and}\ \emph{SB}(\overline{D})=\emph{SB}(W)\]
\end{definition}
\begin{remark}
Note that it is easy to check that the stable base locus $\text{SB}(\overline{D})$ is indeed eventually stable \emph{i.e} there exists an integer $p_0$ such that $\text{SB}(\overline{D})=\text{Bs}(p_0\overline{D})$ by using noetherianity of $U$ just like in the projective case.
\end{remark}
As discussed above we want to show that this above notion is invariant under passing to other model ample divisors. Our next lemma is the main ingredient to show that
\begin{lemma}
\label{lemma:modfree}
Suppose $X_1$ and $X_2$ are two normal projective models of a normal quasi-projective variety $U$ over $K$, $f\colon X_1\rightarrow X_2$ a birational morphism which is an isomorphism over $U$ and $\overline{A}_1. \overline{A}_2$ ample divisors on $X_1$ and $X_2$ respectively. Furthermore suppose $\overline{D}$ is an adelic divisor on $U$. Then for any closed irreducible sub-variety $E$ of $U$,  $E\nsubseteq\emph{Bs}(m_0\overline{D}-\overline{A}_2)$ for some positive integer $m_0$ if and only if $E\nsubseteq\emph{Bs}(n_0\overline{D}-\overline{A}_1)$ for some positive integer $n_0$.
\end{lemma}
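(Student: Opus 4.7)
The plan is to prove the two implications separately by working on the common projective model $X_1$, where $\overline{A}_1$ is represented by the ample divisor $A_1$ and $\overline{A}_2$ is represented by the pullback $f^*A_2$. Since $A_2$ is ample on $X_2$ and $f$ is birational, $f^*A_2$ is nef and big on $X_1$ but typically fails to be ample; crucially, its failure of ampleness lies entirely on the exceptional locus of $f$, which (because $f$ is an isomorphism over $U$) is contained in $X_1\setminus U$. The two directions are asymmetric since we only have a birational morphism in one direction between the two models.

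For the direction $\Rightarrow$, fix $p\in E$ together with a section $s\in H^0(U,m_0\overline{D}-\overline{A}_2)$ with $s(p)\neq 0$. The strategy is to produce an integer $M\geq 1$ and a rational function $g\in\kappa(U)^\times$ such that $\text{div}(g)+Mf^*A_2-A_1\geq 0$ on $X_1$ with $g(p)\neq 0$; then, adding this effectivity relation to the one obtained by raising $s$ to the $M$-th power, the product $s^Mg$ lies in $H^0(U,Mm_0\overline{D}-\overline{A}_1)$ and is non-vanishing at $p$, finishing this direction with $n_0=Mm_0$. To construct $g$ we invoke the main result of~\cite{birkar}, namely Nakamaye's theorem over arbitrary fields: the augmented base locus $\mathbf{B}_+(f^*A_2)$ equals the null locus of the big-and-nef divisor $f^*A_2$, which in turn consists of the subvarieties contracted by $f$ and is therefore contained in $\text{Exc}(f)\subseteq X_1\setminus U$. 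Using the identity $\mathbf{B}_+(f^*A_2)=\mathbf{B}(f^*A_2-\tfrac{1}{m}A_1)$ for $m$ sufficiently large, and passing to a further divisible multiple so that the stable base locus is realised as an honest base locus, we find integers $M,N\geq 1$ with $\text{Bs}(Mf^*A_2-NA_1)\cap U=\emptyset$; multiplying a section of $Mf^*A_2-NA_1$ non-vanishing at $p$ by a section of the base-point-free divisor $(N-1)A_1$ also non-vanishing at $p$ yields the desired $g$.

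For the direction $\Leftarrow$, the argument is more elementary and does not require Birkar's theorem. Since $A_1$ is ample on $X_1$, by Kleiman's criterion (openness of the ample cone) the divisor $NA_1-f^*A_2$ is ample for all sufficiently large $N$, and a further multiple is very ample and hence base-point-free. Therefore we can find a section $h\in H^0(X_1,NA_1-f^*A_2)=H^0(U,N\overline{A}_1-\overline{A}_2)$ not vanishing at any preassigned point $p\in U$. Given a section $s\in H^0(U,n_0\overline{D}-\overline{A}_1)$ with $s(p)\neq 0$, the product $s^Nh$ lies in $H^0(U,Nn_0\overline{D}-\overline{A}_2)$ and is non-vanishing at $p$, giving $m_0=Nn_0$.

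The main obstacle is the forward direction. In the projective ample-versus-ample setting, one would use Serre-type vanishing to directly compare two ample divisors on a single model; here the fact that $f^*A_2$ is only big-and-nef (and not ample) forces us to use the much more delicate Nakamaye-type statement of~\cite{birkar} in order to control the base loci of $Mf^*A_2-A_1$ on the open set $U$. The crucial geometric input making the plan succeed is that the obstruction to ampleness of $f^*A_2$ is concentrated on the exceptional locus of $f$, which is disjoint from $U$.
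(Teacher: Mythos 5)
Your argument is correct and follows essentially the same route as the paper: both invoke Birkar's Nakamaye-type theorem for the $\Rightarrow$ direction (you by showing $B_+(f^*A_2)$ equals the null locus and hence misses $U$; the paper by noting $(f^*A_2)|_{\overline{E}}$ is big, so $\overline{E}\nsubseteq B_+(f^*A_2)$), and both use Serre-type positivity of the ample $A_1$ on $X_1$ for the $\Leftarrow$ direction. The only small slip is asserting that $(N-1)A_1$ is base-point-free as stated; one must pass to a further common multiple (or raise the section of $Mf^*A_2-NA_1$ to a power $r$ and use $(rN-1)A_1$ for $r$ large), but this is a cosmetic repair.
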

\begin{proof}
We first suppose that $E\nsubseteq \text{Bs}(m_0\overline{D}-\overline{A}_2)$.
We denote $f^*\overline{A}_2=\overline{A}_2'$ which is a big nef divisor on $X_1$ as $\overline{A}_2$ was big and nef( being ample) and this notions are invariant under bi-rational pull-backs. Let $\overline{E}$ be the Zariski closure of $E$ in $X_1$. Then clearly $\overline{A}_2'|_{\overline{E}}$ is big and as $\overline{A}_2'$ is also nef, we can deduce from Theorem 1.4 of \cite{birkar} that for large enough integer $s_0$, $\overline{E}$ is not contained in the (projective) stable base locus of $s_0\overline{A}_2'-\overline{A}_1$ since $\overline{A}_1$ is ample in $X_1$. Restricting everything to $U$, we can find a positive integer $p_0$ and section $s'\in H^0(U,s_0p\overline{A}_2-p\overline{A}_1)$ such that $s'$ does not vanish along $E$ whenever $p_0\mid p$. Tensoring by a section of $H^0(U,(p-1)\overline{A}_1)$ non-vanishing on $E$, which we can find as $\overline{A}_1$ is ample, we produce a section $s\in H^0(s_0p\overline{A}_2-\overline{A}_1)$ non-vanishing on $E$ whenever $p_0\mid p$. By hypothesis we can find a section $s_0\in H^0(U,m_0s_0p_0\overline{D}-s_0p_0\overline{A}_2)$ non-vanishing along $E$. Hence picking $p=p_0$ and tensoring $s$ and $s_0$ we produce a section in $H^0(U,m_0s_0p_0\overline{D}-\overline{A}_1)$ which does not vanish identically on $E$ and hence $E\nsubseteq\text{Bs}(n_0\overline{D}-\overline{A}_1)$ and finishes one direction of the claim with $n_0=m_0s_0p_0$.\\
For the other side, suppose $E\nsubseteq\text{Bs}(n_0\overline{D}-\overline{A}_1)$. Hence for every positive integer $p$ we can find a section $s_0\in H^0(U,n_0p\overline{D}-p\overline{A}_1)$ which does not vanish identically on $E$. Now chose $p$ large enough such that $p\overline{A}_1-\overline{A}_2'$ is very ample which we can do by Serre's finiteness theorem on projective varieties because $\overline{A}_1$ is ample on $X_1$. Then chosing a section of $p\overline{A}_1-\overline{A}_2'$ on $X_1$ not vanishing identically on $\overline{E}$ and restricting to $U$, we obtain a section $s_0\in H^0(U,p\overline{A}_1-\overline{A}_2)$ not vanishing identically on $E$ for large enough $p$. Once again tensoring $s$ and $s_0$ we obtain that $E\nsubseteq\text{Bs}(m_0\overline{D}-\overline{A}_2)$ with $m_0=n_0p$ for large enough $p$ and finishes the proof.
\end{proof}
\begin{remark}
The proof of the above lemma follows along similar lines as the independence of the augmented base locus on the choice of the ample divisor is shown in the projective case. However it uses Serre's finiteness theorem which has no known versions in the adelic setting due to non-invariance of ampleness under birational pull-backs. However it turns out the gap in one direction of the proof can be bridged by the main result due to \cite{birkar} as we have shown above and in the other direction we already have Serre finiteness.
\end{remark}
Finally we can deduce the the desired invariance under pull-backs of model ample divisors as a direct corollary of Lemma \ref{lemma:modfree} which we do next.
\begin{corollary}
\label{corol:modind}
Suppose $\overline{D}$ is an adelic divisor on a normal quasi-projective variety $U$ over $K$ and suppose $X_1$ and $X_2$ are two projective models of $E$ with ample divisors $\overline{A}_1$ and $\overline{A}_2$ respectively on them. Then for any closed irreducible sub-variety $E$ of $U$, we have that $E\nsubseteq\emph{Bs}(m_0\overline{D}-\overline{A}_2)$ for some positive integer $m_0$ if and only if $E\nsubseteq\emph{Bs}(n_0\overline{D}-\overline{A}_1)$ for some positive integer $n_0$. In particular the set $B_+(\overline{D},\overline{A})=\cap_{m\in\mathbb{N}}\emph{Bs}(m\overline{D}-\overline{A})$ is independent of the chosen model ample divisor $(X,\overline{A})$.
\end{corollary}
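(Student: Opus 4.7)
The plan is to reduce to the previous lemma by dominating $X_1$ and $X_2$ with a common third projective model. Given two projective models $X_1$ and $X_2$ of $U$, I take $X_3$ to be the normalization of the Zariski closure of the diagonal image of $U$ in $X_1 \times_K X_2$. Then $X_3$ is a normal projective model of $U$ that admits birational morphisms $f_i\colon X_3\to X_i$ for $i=1,2$, each an isomorphism over $U$.

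Next I pick any ample divisor $\overline{A}_3$ on $X_3$ and apply Lemma \ref{lemma:modfree} twice: once to the pair $(X_3,X_1)$ with the morphism $f_1$ and the ample divisors $\overline{A}_3,\overline{A}_1$, and once to the pair $(X_3,X_2)$ with $f_2$ and $\overline{A}_3,\overline{A}_2$. The first application gives the equivalence
\[
E\nsubseteq \mathrm{Bs}(m_0\overline{D}-\overline{A}_1)\ \text{for some } m_0\quad\Longleftrightarrow\quad E\nsubseteq \mathrm{Bs}(k_0\overline{D}-\overline{A}_3)\ \text{for some } k_0,
\]
and the second application gives the analogous equivalence with $\overline{A}_2$ in place of $\overline{A}_1$. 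Chaining these two equivalences yields the desired biconditional between $\overline{A}_1$ and $\overline{A}_2$.

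For the "in particular" assertion, observe that for an irreducible closed subvariety $E\subseteq U$ the condition $E\subseteq B_+(\overline{D},\overline{A})$ is exactly the negation of the condition "there exists $m_0$ with $E\nsubseteq \mathrm{Bs}(m_0\overline{D}-\overline{A})$". Applying the equivalence just established (and using the noetherian stabilization recorded in the remark after Definition \ref{def:baseloci}), we get $E\subseteq B_+(\overline{D},\overline{A}_1)$ iff $E\subseteq B_+(\overline{D},\overline{A}_2)$. Specializing $E$ to closures of points (or to irreducible components of the closed sets $B_+(\overline{D},\overline{A}_i)$) yields the set-theoretic equality $B_+(\overline{D},\overline{A}_1)=B_+(\overline{D},\overline{A}_2)$.

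The only non-routine step is the construction of $X_3$ dominating both $X_i$; however, this is a standard fact about projective models of a fixed quasi-projective variety, and once in place the rest of the argument is a formal chaining of Lemma \ref{lemma:modfree}. In particular no further appeal to \cite{birkar} or to Serre finiteness is needed here, since all that work has been absorbed into Lemma \ref{lemma:modfree}.
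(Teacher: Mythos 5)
Your argument matches the paper's: both construct a common projective model dominating $X_1$ and $X_2$, choose an ample divisor on it, and apply Lemma~\ref{lemma:modfree} twice to chain the equivalences. Your write-up simply spells out the standard construction of the dominating model (closure of the diagonal in the fiber product) and the passage from the pointwise equivalence to the set-theoretic identity, both of which the paper leaves implicit.
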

\begin{proof}
Clearly the the second claim follows from the first and the first claim follows directly from Lemma \ref{lemma:modfree} by noting that we can always find a projective model $X$ of $U$ dominating both $X_1$ and $X_2$ via a birational morphism over $U$ and an ample divisor on $X$.
\end{proof}
The above corollary clearly shows what should be the definition of our augmented base locus which we record in the next definition.
\begin{definition}
\label{def:augloc}
Suppose $\overline{D}$ is an adelic divisor on a normal quasi projective variety $U$ over $K$. We define the $\emph{augmented base locus}$ of $\overline{D}$ as $B_+(\overline{D})=\cap_{m\in\mathbb{N}}\emph{Bs}(m\overline{D}-\overline{A})$ for any ample divisor $\overline{A}$ on a projective model of $U$.
\end{definition}
\begin{remark}
Note that the above definition makes sense thanks to Corollary \ref{corol:modind}. It is easy to check that $B_+(m_0\overline{D})=B_+(\overline{D})$ for any positive integer $m_0$ and hence we can define an augmented base locus of an adelic $\mathbb{Q}$-divisor by passing to integral multiples.
\end{remark}
We end this section with a lemma which will be necessary later to show that the Okounkov bodies of restricted linear series behave nicely when the sub-variety is not contained in the augmented base locus.
\begin{corollary}
\label{corol:consec}
Suppose $\overline{D}$ is an adelic divisor on a normal quasi-projective variety $U$ over $K$ and suppose $E$ is a closed irreducible sub-variety with $E\nsubseteq B_+(\overline{D})$. Then there exist a projective model $X$ such that for any ample divisor $\overline{A}$ on $X$, there exists sections $s_i\in H^0(U,(m_0+i)\overline{D}-p_i\overline{A})$ not vanishing identically on $E$ for some positive integers $m_0,p_0,p_1$ and $i=0,1$.

\end{corollary}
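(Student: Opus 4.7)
The plan is to manufacture the two sections by combining the $B_+$ condition (which produces a section with negative ample part) with Serre's theorem on a carefully chosen model (which produces a section whose coefficient in $\overline{D}$ is exactly one and whose ample part is positive), and then tensoring suitable powers.

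First, I would fix a projective model $X$ of $U$ that dominates both the boundary model $X_0$ and a model $X_j$ appearing in the Cauchy representation $\{X_i,D_i,q_i\}$ of $\overline{D}$, with $q_j$ positive and small; working in the setting where resolution of singularities is available (e.g.\ characteristic zero), I would additionally take $X$ to be smooth. On $X$, the pullback of $D_j$ gives a $\mathbb{Q}$-Cartier divisor $D_X$, and $\overline{D_0}$ pulls back to an integer Cartier divisor, still denoted $\overline{D_0}$. Setting $D_j':=D_X-q_j\overline{D_0}$, the Cauchy relation~\eqref{eq:cauchy} passed to the adelic limit gives $\overline{D}\ge D_j'$ in $\widehat{\text{Div}}(U,K)$. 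Because $X$ is smooth, the Weil floor $F:=\lfloor D_j'\rfloor$ is automatically integer Cartier, and clearly $D_j'\ge F$, so $\overline{D}\ge F$ as adelic divisors as well.

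Given an arbitrary ample divisor $\overline{A}$ on $X$, I would produce two building blocks. From the hypothesis $E\not\subseteq B_+(\overline{D})$ together with Corollary~\ref{corol:modind}, there exist an integer $n\ge 1$ and a section $s\in H^0(U,n\overline{D}-\overline{A})$ not vanishing identically on $E$. Independently, Serre's theorem on the projective variety $X$ shows that the integer Cartier divisor $F+q\overline{A}$ is globally generated (indeed very ample) for all sufficiently large integers $q$; choose such a $q$ and a section $g\in H^0(X,F+q\overline{A})$ not vanishing identically on the Zariski closure $\overline{E}$ of $E$ in $X$. The crucial adelic reinterpretation is that, because $\text{div}(g)+F+q\overline{A}\ge 0$ on $X$ and $\overline{D}\ge F$ adelically, one has $\text{div}(g)+\overline{D}+q\overline{A}\ge 0$, so $g$ is in fact a section in $H^0(U,\overline{D}+q\overline{A})$, still non-vanishing identically on $E$ and with coefficient exactly $1$ in $\overline{D}$.

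To conclude, for any integer $k>q$ I would set $m_0=kn$, $p_0=k$, $p_1=k-q$, and take
\[
s_0=s^k\in H^0(U,m_0\overline{D}-p_0\overline{A}),\qquad s_1=s^k\otimes g\in H^0(U,(m_0+1)\overline{D}-p_1\overline{A}),
\]
both non-vanishing identically on $E$ because $s$ and $g$ are. The main obstacle lies in the construction of $g$: a naive Serre-type argument on the $\mathbb{Q}$-Cartier divisor $D_j'+q\overline{A}$ only produces sections after clearing a common denominator $N$, giving a section of $N\overline{D}+Nq\overline{A}$ rather than of $\overline{D}+q\overline{A}$, which would only let us shift the $\overline{D}$-exponent by $N$ and hence fail to reach consecutive integers. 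The Weil-floor trick $F=\lfloor D_j'\rfloor$ together with the smoothness of $X$ removes this $N$-factor at the cost of a small loss of effectivity that is comfortably absorbed by the ample $q\overline{A}$; in settings without smooth projective models one would instead need an arithmetic coprimality argument between $n$ and the attainable positive exponent $N$ to reach consecutive values via the resulting numerical semigroup.
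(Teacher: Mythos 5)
Your strategy is essentially the one the paper uses: produce from the $B_+$-hypothesis a section with a large $\overline{D}$-exponent and a negative multiple of $\overline{A}$, produce from Serre finiteness on a projective model a section whose $\overline{D}$-exponent is exactly one (and whose $\overline{A}$-exponent is positive), and tensor a power of the first with the second to jump the $\overline{D}$-exponent by exactly one unit. The paper does precisely this with $X=X_1$: it takes $s_0\in H^0(U,2n_0p\overline{D}-2p\overline{A})$ not vanishing on $E$ (a high power of the section coming from $E\nsubseteq\mathrm{Bs}(n_0\overline{D}-\overline{A})$), takes $s'\in H^0(U,D_1'+p\overline{A})$ with $D_1'=D_1-q_1D_0$ coming from Serre on $X_1$, and uses the effectivity $D_1'\le\overline{D}$ to land $s_0\otimes s'$ in $H^0(U,(2n_0p+1)\overline{D}-p\overline{A})$. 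Your $(m_0,p_0,p_1)=(kn,k,k-q)$ bookkeeping is correct and plays the same role.

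The one genuine divergence is your treatment of the $\mathbb{Q}$-divisor technicality. You pass to a smooth projective model (hence invoke resolution of singularities, restricting to characteristic zero) so that the Weil floor $F=\lfloor D_j'\rfloor$ is automatically Cartier and classical Serre very-ampleness applies to $F+q\overline{A}$. That works, but the detour is unnecessary and the ``$N$-factor'' obstacle you flag is a red herring. On the normal projective model $X_1$ itself the floor $\lfloor D_1'\rfloor$ is a Weil divisor whose associated rank-one reflexive sheaf $\mathcal{O}_{X_1}(\lfloor D_1'\rfloor)$ is coherent, and applying Serre's finiteness theorem to coherent sheaves (rather than clearing denominators of a $\mathbb{Q}$-Cartier divisor) shows that $\mathcal{O}_{X_1}(\lfloor D_1'\rfloor)\otimes\mathcal{O}_{X_1}(p\overline{A})\cong\mathcal{O}_{X_1}(\lfloor D_1'\rfloor+p\overline{A})$ is globally generated for $p\gg 0$. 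Since $H^0(X_1,D_1'+p\overline{A})=H^0(X_1,\lfloor D_1'\rfloor+p\overline{A})$, this directly gives a section with $\overline{D}$-exponent one that does not vanish at the generic point of $\overline{E}$, with no denominator-clearing and no smooth model. This is what the paper's phrase ``choose $p$ so large that $D_1'+p\overline{A}$ is very ample'' is implicitly invoking, and it works over an arbitrary base field. So your proof is correct but imports a superfluous resolution hypothesis; otherwise it is the paper's argument.
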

\begin{proof}
Suppose $\overline{D}$ is given a sequence of models $\{X_i,D_i\}$ and rationals $q_i\to 0$ as usual and let $X=X_1$. Then as $E\nsubseteq B_+(\overline{D})$, for any ample divisor $\overline{A}$ on $X_1$ we can assume that $E\nsubseteq\text{Bs}(n_0\overline{D}-\overline{A})$ for some $n_0\in\mathbb{N}$ and hence we can produce a section $s_0\in H^0(U,2n_0p\overline{D}-2p\overline{A})$ not vanishing identically on $E$ for every positive integer $p$. Choose $p$ so large that $D_1'+p\overline{A}$ is very ample where $D_1'=D_1-q_1D_0$ and choose a section $s'\in H^0(U,D_1'+p\overline{A})$ which does not vanish identically on $E$. Then tensoring $s_0$ and $s'$ we get a section $s_1\in H^0(U,2n_0p\overline{D}+D_1'-p\overline{A})\subseteq H^0(U,(2n_0p+1)\overline{D}-p\overline{A})$ where the inclusion follows from the effectivity relation $D_1'\le \overline{D}$. Clearly $s_0$ and $s_1$ satisfy the claim with $m_0=2n_0p$, $p_1=2p$ and $p_2=p$.
\end{proof}
\subsection{Restricted volumes}
In this section, we define the restricted volume of an adelic divisor along a closed sub-variety $E$ of $U$ in analogy to the projective setting. Then we go on to show that if $E$ is such an irreducible  closed sub-variety with $E\nsubseteq B_+(\overline{D})$, then this restricted volume can be realised as the volume of an Okounkov body calculated with respect to a suitable flag dominated by $E$. Much in the spirit of Theorem \ref{thm:okoun} we deduce that the $\limsup$ defining the restricted volume is actually a limit.\\
Suppose we have an irreducible closed sub-variety $E\xhookrightarrow{i}U$ embedding in $U$ via the closed immersion $i$. Then as explained in sub-section 5.2.2 of \cite{yuan2021adelic} we can consider the pullback of the adelic line bundle $O(\overline{D})$ by $i$ which we denote as the \emph{restriction} of $O(\overline{D})$ to $E$ and denote as $O(\overline{D})|_E$. We recall that this line bundle is given by the datum $\{E_i,O(D_i)|_{E_i}\}$ where $D_i$ are the ,models defining $\overline{D}$ and $E_i$ are the Zariski closures of $E$ in the projective models $X_i$ of $U$. Then there is a restriction map of vector spaces on the space of global sections 
\[H^0(U,O(\overline{D}))\xrightarrow{\text{restr}}H^0(E,O(\overline{D})|_E)\]
and we denote the image of this map by $H^0(U|E,O_E(\overline{D}))$
obtained by just restriction maps on sections model wise. This lets us define the notion of restricted volume.
\begin{definition}
\label{def:restrvol}
Suppose $E$ is a closed irreducible sub-variety of a normal quasi-projective variety $U$ over $K$ and $\overline{D}$ be an adelic divisor on $U$. Then we define the $\emph{restricted volume}$ of $\overline{D}$ along $E$ as 
\[\widehat{\emph{vol}}_{U|E}(\overline{D})=\limsup_{m\to\infty}\frac{\text{dim}_K(H^0(U|E,O_E(m\overline{D}))}{m^d/d!}\]
where $d=\text{dim}(E)$.
\end{definition}
We can view the finite-dimensional vector spaces $W_m=H^0(U|E,O_E(m\overline{D}))$ as a graded linear sub-series of $H^0(E,O(m\overline{D})|_E)\subseteq H^0(E,O(mD)|_E)$. And hence if we can fix a flag in $E$, we can construct an Okounkov body corresponding to $\{W_m\}$ as indicated in section 1 of \cite{lazarsfeld2008convex}.\\
Now given a closed sub-variety $E$ in $U$, we fix a flag $Y_0\subset Y_1\ldots\subset Y_d=E$ in $E$ where $\text{dim}(E)=d$. Note that in any projective model of $U$ this flag induces a canonical partial flag contained in the closure $E_j$ of $E$ in $U$ by taking closures we obtain a flag in the model such that the (partial) valuation of a global section of some bundle with respect to this on the model is the same after restricting to $U$ and evaluating w.r.t to the flag $Y_0\subset Y_1\ldots\subset Y_d$ and we always take this flag to calculate valuation vectors in the projective models. We fix this flag to calculate the Okounkov body of the linear series $\{W_m\}$. Then we have the notions of the graded semi-group $\Gamma_{U|E}(\overline{D})\subseteq \mathbb{N}^{d+1}$ and the Okounkov body $\Delta_{U|E}(\overline{D})\subseteq \mathbb{R}^d$. As in chapter 1, we also define $\Gamma_{U|E}(\overline{D})_m$ to be the fiber of the graded semi-group over the positive integer $m$. Next we show that when $E\nsubseteq B_+(\overline{D})$, then the Okounkov body behaves nicely in the sense of satisfying properties analogous to Lemma \ref{lemma:fingen}.
\begin{lemma}
\label{lemma:restrfingen}
Suppose $\overline{D}$ is a adelic divisor on a normal quasi-projective variety $U$ over $K$. Furthermore suppose $E$ is a closed irreducible sub-variety of $U$ such that $E\nsubseteq B_+(\overline{D})$. Then the graded semi-group $\Gamma_{U|E}(\overline{D})$ satisfies the following properties
\begin{enumerate}
    \item $\Gamma_{U|E}(\overline{D})_0=\{0\}$
    \item There exists finitely many vectors $(v_i,1)$ spanning a semi-group $B\subseteq \mathbb{N}^{d+1}$ such that \break $\Gamma_{U|E}(\overline{D})\subseteq B$.
    \item $\Gamma_{U|E}(\overline{D})$ generates $\mathbb{Z}^{d+1}$ as a group.
\end{enumerate}
\end{lemma}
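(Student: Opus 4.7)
The plan is to mirror the proof of Lemma~\ref{lemma:fingen} with the key change that our sections must restrict non-trivially to $E$; the main new ingredient, replacing Lemma~\ref{lemma:injectamp}, is Corollary~\ref{corol:consec}.

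Property~(1) is immediate since $H^0(U|E,O_E(0))=K$, so the only valuation vector is the origin. For property~(2) I would argue the boundedness of $\Delta_{U|E}(\overline{D})$ in analogy with Lemma~\ref{lemma:boundadel}: the right-hand inclusion of Lemma~\ref{lemma:incl} shows that $H^0(U|E,O_E(m\overline{D}))$ embeds into the restricted linear series on the Zariski closure $E_j\subseteq X_j$ of $E$, namely $H^0(E_j,m(D_j+q_jD_0)|_{E_j})$. Since the flag on $E$ was set up precisely so that valuations on $E$ coincide with the valuations computed on the closure $E_j$, a uniform bound $v_i(s)\le mb$ follows from applying Lemma~1.10 of \cite{lazarsfeld2008convex} to a suitable integral multiple of $(D_j+q_jD_0)|_{E_j}$ on the projective variety $E_j$. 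The standard combinatorial packaging then yields the required finite spanning set of generators $(v_i,1)$.

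For property~(3), the substance of the lemma, I would first choose a very ample $\overline{A}$ on a projective model $X$ of $U$ with enough positivity that: (a) Lemma~2.2 of \cite{lazarsfeld2008convex} applied on $E_j$ with the very ample divisor $\overline{A}|_{E_j}$ produces, in any given power $p$, sections $\bar\eta_0,\ldots,\bar\eta_d\in H^0(E_j,p\overline{A}|_{E_j})$ with valuation vectors $0,e_1,\ldots,e_d$; and (b) the restriction map $H^0(X,p\overline{A})\twoheadrightarrow H^0(E_j,p\overline{A}|_{E_j})$ is surjective for every $p\ge 1$, which holds by Serre vanishing applied to the ideal sheaf of $E_j$ once $\overline{A}$ is ample enough. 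Applying Corollary~\ref{corol:consec} to this $\overline{A}$ yields sections $s_i\in H^0(U,(m_0+i)\overline{D}-p_i\overline{A})$ whose restrictions to $E$ are non-zero, with valuation vectors $v_i:=\nu_x(s_i|_E)$. Lifting the $\bar\eta$'s to $X$ via (b), restricting to $U$, and tensoring with $s_0$ and $s_1$ respectively, we obtain sections of $m_0\overline{D}$ and $(m_0+1)\overline{D}$ on $U$ whose restrictions to $E$ are products of two non-zero sections and have valuations $v_0,\,v_0+e_1,\,\ldots,\,v_0+e_d$ and $v_1$. Thus $\Gamma_{U|E}(\overline{D})$ contains the vectors $(v_0,m_0)$, $(v_0+e_i,m_0)$ for $i=1,\ldots,d$, and $(v_1,m_0+1)$, whose integral span is all of $\mathbb{Z}^{d+1}$.

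The main obstacle is step~(3): we need sections of $(m_0+i)\overline{D}-p_i\overline{A}$ that do not vanish identically on $E$ \emph{together with} sections of the ample twist on all of $X$ whose restrictions to $E$ realise any prescribed valuation vector. The first requirement is exactly what Corollary~\ref{corol:consec} furnishes, while the second is handled by running Lemma~2.2 of \cite{lazarsfeld2008convex} on $E_j$ and lifting via the Serre-vanishing surjectivity; once these two pieces are in place, the combinatorial endgame mirrors the unrestricted case verbatim.
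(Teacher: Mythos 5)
Your proposal is correct and follows essentially the same route as the paper: property~(2) via the right-hand inclusion of Lemma~\ref{lemma:incl} and boundedness of Okounkov bodies on the projective closure $E_j$, and property~(3) via Corollary~\ref{corol:consec} combined with sections of a very ample twist whose restrictions to $E$ realise the valuation vectors $0,e_1,\ldots,e_d$. You are slightly more explicit than the paper about the need to lift sections from $E_j$ to $X_j$ via surjectivity of the restriction map (Serre vanishing after replacing $\overline{A}$ by a suitable multiple), a point the paper passes over quickly, but this is the same underlying argument.
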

\begin{remark}
Note that in analogy to Lemma \ref{lemma:fingen} it is desirable that $\overline{D}$ is big in the above lemma. However since we assume that $E\nsubseteq B_+(\overline{D})$, by Definition \ref{def:augloc} we already have a non-zero section $s\in H^0(m\overline{D}-\overline{A})$ for some model ample divisor $\overline{A}$ on a projective model $X$ of $U$. Hence we have the inclusion 
\[H^0(U,n\overline{A})\xhookrightarrow{s^{\otimes n}}H^0(U,mn\overline{D})\]
for all positive integers $n$
which shows that $\widehat{\text{vol}}(m\overline{D})\ge \widehat{\text{vol}}(\overline{A})>0$
and hence $\overline{D}$ is big. In other words the assumption $E\nsubseteq B_+(\overline{D})$ already implies that $\overline{D}$ is big.
\end{remark}
\begin{proof}
Suppose $\overline{D}$ is given by the sequence of models $\{X_i,D_i\}$ and rationals $q_i\to 0$ as usual. Note that as in the proof of Lemma \ref{lemma:fingen}, the first point is trivial and the second point can be deduced once we know that the vectors of $\Gamma_{U|E}(\overline{D})_m$ are bounded by $mb$ for some large positive constant $b$ as explained in the proof of Lemma 2.2 in \cite{lazarsfeld2008convex}. In other words we need to show that restricted graded series satisifes the condition (A) as defined in Definition 2.4 of \cite{lazarsfeld2008convex}. Note that the effectivity relation $\overline{D}\le D_j+q_jD_0$ implies the inclusion $\Gamma_{U|E}(\overline{D})_m\subset \Gamma_{U|E}(\overline{D_j+q_jD_0})_m$ for all positive integers $m$. The right hand side is the same as the graded semi-group of $D_j+q_jD_0$ viewed as a $\mathbb{Q}$-divisor on the projective variety $X_j$ calculated with closures of our flag on $E$ and hence by the footnote on page 803 of \cite{lazarsfeld2008convex}, we conclude that $\Gamma_{U|E}(\overline{D_j+q_jD_0})_m$ satisifes condition (A) which clearly shows the second point as $\Gamma_{U|E}(\overline{D})_m$ is a subset. Hence we just need to show the third point.\\
We argue as in the proof of Lemma \ref{lemma:fingen}. Choose a model very ample divisor $\overline{A}$ on $X_j$ such that it has sections $\overline{s_i}$ on $X_j$ with $v(\overline{s_i})=(e_i)$ for $i=0\ldots d$ where $e_0$ is the zero vector, $\{e_i\}$ is the standard basis of $\mathbb{R}^{d}$ for $i=1,\ldots d$ and $v(\cdot)$ is the valuation corresponding to the closures in $X_j$ of the chosen flag in $E$. We can always do this as $\overline{A}$ is chosen very ample and hence the restriction $\overline{A}|_{E_j}$ is very ample where $E_j$ is the closure of $E$ in $X_j$, as explained in proof of Lemma 2.2 in \cite{lazarsfeld2008convex}. Restricting to $U$ gives sections $s_i\in H^0(U,\overline{A})$ with the same valuation vectors. Note that then for all positive integers $p$, by appropriately tensoring these sections we can also find sections $s_{ip}=s_0^{\otimes p-1}\otimes s_i\in H^0(U,p\overline{A})$ such that $v(s_{ip})=(e_i)$. Then by restricting to $E$, we get non-zero sections $s_{ip}|_E\in H^0(U|E,O_E(p\overline{A}))$ with $v(s_{ip}|_E)=e_i$. Now using Corollary \ref{corol:consec} we can find positive integers $m_0,p_0,p_1$ and sections $t_0,t_1$ (recalling them $t_i$ for notational convenience) satisfying the properties stated in the corollary. Restricting $t_i$'s to $E$ we get non-zero sections $t_i|_E\in H^0(U|E,O_E((m_0+i)\overline{D}-p_i\overline{A}))$ and suppose $v(t_i|_E)=f_i$ for $i=0,1$. Then arguing like in the proof Lemma \ref{lemma:fingen} by tensoring $s_{ip}|_E$'s with $t_i|_E$'s we conclude that the vectors $(f_0,m_0)$, $(f_0+e_i,m_0)$ and $(f_1,m_0+1)$ all belong to $\Gamma_{U|E}(\overline{D})$ which clearly completes the proof.
\end{proof}
Then arguing just like in chapter 1, we deduce the main theorem of this section which we state next.
\begin{theorem}
\label{theorem:restrvol}
Suppose $\overline{D}$ is an adelic divisor on a normal quasi-projective variety $U$ over $K$. Furthermore suppose $E$ is a closed irreducible sub-variety of $U$ such that $E\nsubseteq B_+(\overline{D})$. Then we have 
\[\emph{vol}_{\mathbb{R}^d}(\Delta_{U|E}(\overline{D}))=\lim_{m\to\infty}\frac{\#\Gamma_{U|E}(\overline{D})_m}{m^d}=\lim_{m\to\infty}\frac{\emph{dim}_K(H^0(U|E,O_E(m\overline{D})))}{m^d}=\frac{1}{d!}\cdot \widehat{\emph{vol}}_{U|E}(\overline{D})\]
where $\emph{dim}(U)=d$
\end{theorem}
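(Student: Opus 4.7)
The plan is to follow the same template as the proof of Theorem \ref{thm:okoun}, treating Lemma \ref{lemma:restrfingen} as the main input that allows us to invoke purely convex-geometric machinery from \cite{lazarsfeld2008convex}. The idea is that once $\Gamma_{U|E}(\overline{D})$ is known to satisfy the three hypotheses of Proposition 2.1 of \cite{lazarsfeld2008convex}, the volume statement essentially writes itself.

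First, I would establish the restricted analogue of Lemma \ref{lemma:graded}, namely that $\#\Gamma_{U|E}(\overline{D})_m=\dim_K(H^0(U|E,O_E(m\overline{D})))$. This follows by applying Lemma 1.4 of \cite{lazarsfeld2008convex} to the finite-dimensional vector subspace $W_m=H^0(U|E,O_E(m\overline{D}))$ of $H^0(E,O(mD)|_E)$ together with the valuation function coming from the chosen flag $Y_0\subset Y_1\subset\ldots\subset Y_d=E$. The finite-dimensionality of $W_m$ follows from it being the image of $H^0(U,m\overline{D})$ under the restriction map, and the latter is finite-dimensional by \cite[Lemma~5.1.7]{yuan2021adelic}.

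Second, Lemma \ref{lemma:restrfingen} verifies exactly the three standing hypotheses needed to invoke Proposition 2.1 of \cite{lazarsfeld2008convex} applied to the graded semi-group $\Gamma_{U|E}(\overline{D})\subseteq\mathbb{N}^{d+1}$. The conclusion of that proposition reads
\[
\emph{vol}_{\mathbb{R}^d}(\Delta_{U|E}(\overline{D}))=\lim_{m\to\infty}\frac{\#\Gamma_{U|E}(\overline{D})_m}{m^d}\,,
\]
which in particular asserts the existence of the limit on the right. Combining this with the graded dimension identity from the previous step immediately yields the second equality in the theorem.

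Finally, since the sequence $\dim_K(H^0(U|E,O_E(m\overline{D})))/m^d$ is now known to converge, the $\limsup$ appearing in Definition \ref{def:restrvol} is actually a limit, so the last equality follows by comparing with the definition of $\widehat{\emph{vol}}_{U|E}(\overline{D})$ up to the factor of $d!$. Essentially all the real work was already done: the only nontrivial obstacle was establishing the third point of Lemma \ref{lemma:restrfingen} (that $\Gamma_{U|E}(\overline{D})$ generates $\mathbb{Z}^{d+1}$), which crucially used the hypothesis $E\not\subseteq B_+(\overline{D})$ through Corollary \ref{corol:consec}. With that in hand, the present theorem is a direct specialization of general convex-geometric results, exactly as Theorem \ref{thm:okoun} was.
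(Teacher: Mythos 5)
Your proposal is correct and follows exactly the route the paper intends: the paper's proof is simply the sentence ``Then arguing just like in chapter 1,'' and your write-up spells out precisely that argument --- the restricted analogue of Lemma \ref{lemma:graded} via Lemma 1.4 of \cite{lazarsfeld2008convex}, then Lemma \ref{lemma:restrfingen} feeding into Proposition 2.1 of \cite{lazarsfeld2008convex}, and finally the observation that convergence of the sequence upgrades the $\limsup$ in Definition \ref{def:restrvol} to a genuine limit. (One small typographical note, independent of your proof: the theorem statement as printed reads ``$\dim(U)=d$'' but the intended meaning, consistent with Definition \ref{def:restrvol} and with your argument, is $\dim(E)=d$.)
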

We end this section with a homogeneity property analogous to Lemma \ref{lemma:homogen}. Before going to that we obtain a crucial property needed for the homogeneity.
\begin{lemma}
\label{lemma:eventuallyfree}
Suppose $\overline{D}$ is an adelic divisor on a normal quasi-projective variety $U$ over $K$ and $E$ is a closed sub-variety with $E\nsubseteq B_+(\overline{D})$. Then there exists an integer $r_0$ such that $H^0(U|E,O_E(r\overline{D}))\neq \{0\}$ for all positive integers $r>r_0$.
\end{lemma}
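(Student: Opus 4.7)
The plan is to construct, for every sufficiently large integer $r$, an actual section of $r\overline{D}$ on $U$ whose restriction to $E$ is not identically zero; such a section witnesses a non-zero element of $H^0(U|E,O_E(r\overline{D}))$. The idea mirrors the scalar case of Lemma \ref{lemma:homogen}, but uses the ''two consecutive values'' output of Corollary \ref{corol:consec} combined with sections of large ample multiples on a projective model to ''fill in the gaps'' between these two consecutive degrees.

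First, I would apply Corollary \ref{corol:consec} to fix a projective model $X$ of $U$ and an ample divisor $\overline{A}$ on $X$, obtaining positive integers $m_0, p_0, p_1$ and sections $t_i \in H^0(U, (m_0+i)\overline{D} - p_i\overline{A})$, for $i = 0, 1$, both non-vanishing identically on $E$. Since $\gcd(m_0, m_0+1) = 1$, the classical Sylvester-Frobenius argument shows that every sufficiently large integer $r$ admits a decomposition $r = am_0 + b(m_0+1)$ with $a, b \in \mathbb{N}$. By irreducibility of $E$, the tensor product $t_0^{\otimes a} \otimes t_1^{\otimes b}$ is then a section of $r\overline{D} - N\overline{A}$, where $N := ap_0 + bp_1$, which is still non-vanishing on $E$ (a product of sections non-vanishing at the generic point of $E$ is again non-vanishing there). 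Separately, ampleness of $\overline{A}$ on $X$ yields a threshold $N_0$ such that $N\overline{A}$ is very ample on $X$ for every $N \geq N_0$; in particular, such $N\overline{A}$ admits a section on $X$ not vanishing at the generic point of the closure of $E$, which restricts to a section $u_N \in H^0(U, N\overline{A})$ not vanishing identically on $E$.

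To close the argument, I would observe that any decomposition $r = am_0 + b(m_0+1)$ satisfies $a + b \geq r/(m_0+1)$, and hence $N \geq \min(p_0, p_1) \cdot r/(m_0+1)$; taking $r_0$ larger than both the Frobenius threshold for the pair $(m_0, m_0+1)$ and the quantity $N_0(m_0+1)/\min(p_0, p_1)$ guarantees $N \geq N_0$ for every admissible decomposition. The triple product $t_0^{\otimes a} \otimes t_1^{\otimes b} \otimes u_N$ is then a non-zero element of $H^0(U, r\overline{D})$ that does not vanish identically on $E$, giving the desired non-zero element of $H^0(U|E, O_E(r\overline{D}))$. The main obstacle is essentially a bookkeeping one: ensuring that the numerical decomposition of $r$ and the range in which $N\overline{A}$ behaves well are simultaneously achievable, which is handled by the lower bound on $a + b$ just noted.
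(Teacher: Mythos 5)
Your proposal is correct and follows essentially the same route as the paper: apply Corollary \ref{corol:consec}, use the coprimality of $m_0$ and $m_0+1$ (Frobenius/Euclidean-division) to represent all large $r$, and absorb the negative ample multiples by tensoring with sections of high powers of $\overline{A}$. The only difference is organizational — the paper neutralizes the $-p_i\overline{A}$ terms immediately, obtaining sections $s_i\in H^0(U,(m_0+i)\overline{D})$ not vanishing on $E$, so that the subsequent Euclidean-division step $r=(a_r-b_r)m_0+b_r(m_0+1)$ needs no further bookkeeping and yields $r_0=m_0^2-1$ at once, whereas you carry the ample deficit through and must then verify $N\ge N_0$, which your lower bound $a+b\ge r/(m_0+1)$ handles correctly.
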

\begin{proof}
By Corollary \ref{corol:consec} there exist non-zero sections $s_i\in H^0(U,(m+i)\overline{D})$ which do not vanish identically on $E$ for $i=0,1$ by tensoring with sections of $p_i\overline{A}$ which do not vanish identically on $E$ which exists as $\overline{A}$ can be assumed very ample. Then for all $r\ge m^2$, write it as $r=a_rm+b_r$ for non-negative integers $a_r\ge m$ and $0\le b_r\le m-1< a_r$. Then note that $s_0^{\otimes a_r-b_r}\otimes s_1^{b_r}$ is a section of $H^0(U,r\overline{D})$ which does not vanish identically on $E$ which clearly finishes the claim with $r_0=m^2-1$
\end{proof}
\begin{lemma}
\label{lemma:restrhomogen}
Suppose $\overline{D}$ is an adelic divisor on a normal quasi-projective variety $U$ over $K$. Then for any closed irreducible sub-variety $E$ of $U$ with $E\nsubseteq B_+(\overline{D})$, we have
\[\Delta_{U|E}(t\overline{D})=t\cdot \Delta_{U|E}(\overline{D})\]
for all positive integers $t$. Hence we can naturally extend the construction of $\Delta_{U|E}(\cdot)$ to big adelic $\mathbb{Q}$-divisors.
\end{lemma}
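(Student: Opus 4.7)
The plan is to imitate the proof of Lemma \ref{lemma:homogen}, but with ordinary global sections replaced by restricted ones $H^0(U|E,O_E(\cdot\,\overline{D}))$. The crucial new ingredient which makes the argument go through in the restricted setting is Lemma \ref{lemma:eventuallyfree}, providing an integer $r_0$ such that $H^0(U|E,O_E(r\overline{D}))\neq\{0\}$ for every $r>r_0$; in other words, the same tensor-product/inclusion trick used for ordinary sections now produces elements that remain non-zero after restriction to $E$.

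First, I would fix such an $r_0$ and choose a positive integer $q_0$ large enough so that $q_0 t - (t+r_0) > r_0$. For each $r$ with $r_0+1\le r\le r_0+t$, Lemma \ref{lemma:eventuallyfree} provides sections $s_r\in H^0(U,r\overline{D})$ and $u_r\in H^0(U,(q_0 t-r)\overline{D})$ whose restrictions to $E$ are non-zero; call these restrictions $\bar s_r$, $\bar u_r$ and set $e_r=v(\bar s_r)$, $f_r=v(\bar u_r)$, the valuation vectors relative to the flag on $E$. Tensoring in order by $s_r$ and $u_r$ and then restricting to $E$ produces injective maps
\[
H^0(U|E,O_E(mt\overline{D}))\xhookrightarrow{\otimes\bar s_r} H^0(U|E,O_E((mt+r)\overline{D}))\xhookrightarrow{\otimes\bar u_r} H^0(U|E,O_E((m+q_0)t\overline{D})),
\]
where the injectivity at each step is because multiplication by a non-zero restricted section is injective in $\kappa(E)$. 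Transferring to valuation vectors, these inclusions give
\[
\Gamma_{U|E}(t\overline{D})_m + e_r+f_r\ \subseteq\ \Gamma_{U|E}(\overline{D})_{mt+r}+f_r\ \subseteq\ \Gamma_{U|E}(t\overline{D})_{m+q_0}.
\]

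Finally, I would recall the construction of $\Delta_{U|E}(\cdot)$ from Definition \ref{def:okounkov} applied to the restricted linear series. Dividing by the appropriate index and letting $m\to\infty$, the fixed shifts $e_r,f_r$ disappear and the middle slice becomes $t$ times the corresponding slice of $\Gamma_{U|E}(\overline{D})$. Passing to closed convex hulls and taking a union over the residues $r$ modulo $t$ gives
\[
\Delta_{U|E}(t\overline{D})\ \subseteq\ t\cdot\Delta_{U|E}(\overline{D})\ \subseteq\ \Delta_{U|E}(t\overline{D}),
\]
which yields the equality. The extension of $\Delta_{U|E}(\cdot)$ to big adelic $\mathbb{Q}$-divisors then follows formally by setting $\Delta_{U|E}(\tfrac{p}{q}\overline{D})=\tfrac{1}{q}\Delta_{U|E}(p\overline{D})$ and checking well-definedness via the homogeneity just established.

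The only subtle point is verifying that tensoring with $s_r$ and $u_r$ and then restricting to $E$ really is multiplicative and preserves non-vanishing on $E$; this is where Lemma \ref{lemma:eventuallyfree} does the work, since without it one could produce tensor products that vanish identically on $E$. Everything else is a direct transcription of the proof of Lemma \ref{lemma:homogen}.
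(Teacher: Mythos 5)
Your proposal is correct and follows essentially the same route as the paper's proof: invoke Lemma \ref{lemma:eventuallyfree} to obtain the threshold $r_0$, choose $q_0$ with $q_0t-(t+r_0)>r_0$, produce for each $r\in\{r_0+1,\dots,r_0+t\}$ non-zero restricted sections $s_r,\,u_r$, and use the two tensor-product inclusions to sandwich $\Gamma_{U|E}(t\overline{D})_m$ between shifts of $\Gamma_{U|E}(\overline{D})_{mt+r}$ and $\Gamma_{U|E}(t\overline{D})_{m+q_0}$, then pass to $m\to\infty$ and take closed convex hulls. The only difference is cosmetic: you first take sections on $U$ with non-vanishing restriction and then restrict, whereas the paper works directly with elements of $H^0(U|E,\cdot)$; these are the same thing since the latter is by definition the image of restriction, and your remark about injectivity of multiplication in $\kappa(E)$ (using irreducibility of $E$) is the correct justification the paper leaves implicit.
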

\begin{proof}
We choose an integer $r_0$ such that $H^0(U|E,O_E(r\overline{D}))\neq \{0\}$ for all $r>r_0$ thanks to Lemma \ref{lemma:eventuallyfree}. Next choose $q_0>0$ such that $q_0t-(t+r_0)>r_0$ for all positive integers $t$. Then for all $r_0+1\le r\le r_0+t$ we can find non-zero sections $s_r\in H^0(U|E,O_E(r\overline{D}))$ and $t_r\in H^0(U|E,O_E((q_0t-r)\overline{D}))$ which gives inclusions 
\[H^0(U|E,O_E(mt\overline{D}))\xhookrightarrow{\otimes s_r}H^0(U|E,O_E((mt+r)\overline{D}))\xhookrightarrow{\otimes t_r}H^0(U|E,O_E((m+q_0)t\overline{D}))\]
which gives the corresponding inclusion of the graded semi-groups
\[\Gamma_{U|E}(t\overline{D})_m+e_r+f_r\subseteq\Gamma_{U|E}(\overline{D})_{mt+r}+f_r\subseteq\Gamma_{U|E}(t\overline{D})_{m+q_0}\]
where $e_r=v(s_r)$ and $f_r=v(t_r)$. Now recalling the construction of $\Delta_{U|E}(\cdot)$ and letting $m\to\infty$ we get 
\[\Delta_{U|E}(t\overline{D})\subseteq t\cdot \Delta_{U|E}(\overline{D})\subseteq\Delta_{U|E}(t\overline{D})\]
which clearly finishes our proof.
\end{proof}
\subsection{Variation of bodies for restricted volumes}
In this section, we construct global bodies whose fibers give the Okounkov bodies $\Delta_{U|F}(\cdot)$ for a ''sufficiently general'' closed sub-variety $F$ of $U$ much in analogy with Theorem \ref{theorem:globalbody}. Most of the constructions follow analogously as in the global case. The crucial point that we need to show is that given a fixed irreducible sub-variety $F$, the set of divisors $\overline{D}$ with $F\nsubseteq B_+(\overline{D})$ is in the interior of the support of the global body as was shown in Lemma \ref{lemma:biginter}. Most of the other arguments will follow identically as in section 5 of chapter 1. However for sake of clarity we will anyway repeat some constructions. We fix a flag $Y_0\subset\ldots Y_d=F$ as explained in the previous section and all calculations of Okounkov bodies is with respect to this flag.\\
\begin{definition}
\label{def:restrglobalbody}
Suppose $\overline{D}$ and $\overline{E}$ are two adelic divisors on a normal quasi-projective variety $U$ over $K$. Given a closed irreducible sub-variety $F$ of $U$ with $F\nsubseteq B_+(\overline{D})$, we define the graded semi-group $\Gamma_{U|E}(E)$ as
\[\Gamma_{U|F}(F)=\{((v(s),a_1,a_2)\mid a_i\in \mathbb{Z}, s\neq 0\in H^0(U|F,O_F(a_1\overline{D}+a_2\overline{E}))\}\]
where $v(\cdot)$ is the valuation corresponding to the chosen flag. Further more we define the global Okounkov body $\Delta(U)$ as 
\[\Delta_{U|F}(F)=\text{closed convex cone}(\Gamma_{U|F}(F))=\Sigma(\Gamma_{U|F}(F))\]
which is a closed convex subset of $\mathbb{R}^d\times \mathbb{R}^2$.
\end{definition}
\begin{definition}
\label{def:conve}
Suppose we have an additive semi-group $\Gamma$ in $\mathbb{R}^d\times \mathbb{R}^2$. Denote by $P$ the projection from $\mathbb{R}^d\times \mathbb{R}^2$ to $\mathbb{R}^2$ and $\Delta=\Sigma(\Gamma)$ is the closed convex cone generated by $\Gamma$. We define the \emph{support} of $\Delta$, denoted as $\text{Supp}(\Delta)$ to be its image under $P$. It coincides with the closed convex cone in $\mathbb{R}^2$ generated by the image of $\Gamma$ under $P$. Finally given a vector $\vec{a}=(a_1,a_2)\in \mathbb{Z}^2$ we denote 
\[\Gamma_{\mathbb{N}\vec{a}}=\Gamma\cap (\mathbb{N}^r\times \mathbb{N}\vec{a})\]
\[\Delta_{\mathbb{R}\vec{a}}=\Delta\cap(\mathbb{R}^d\times\mathbb{R}\vec{a})\]
Furthermore we denote $\Gamma_{\mathbb{N}\vec{a}}$ as a semi-group inside $\mathbb{N}^d\times\mathbb{N}\vec{a}=\mathbb{N}^{d+1}$ and denote the closed convex cone generated by it in $\mathbb{R}^{d+1}$ as $\Sigma(\Gamma_{\mathbb{N}\vec{a}})$.
\end{definition}
We begin by showing the crucial property of the ''good'' divisors being open.
\begin{lemma}
\label{lemma:restrbiginterior}
Suppose $\overline{D}$ and $\overline{E}$ are two adelic divisors such that $F\nsubseteq B_+(\overline{D})$ and  $F\nsubseteq B_+(\overline{D}+q\overline{E})$ for some $q\in\mathbb{Q}$. Then there is an $\epsilon>0$ such that $(1,x)\in \emph{Supp}(\Delta_{U|F}(F))$ for all $x\in(q-\epsilon,q+\epsilon)$.
\end{lemma}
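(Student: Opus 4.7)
The plan is to adapt the strategy of Lemma \ref{lemma:biginter}, with bigness replaced by non-containment in the augmented base locus. It suffices to produce, for each rational $p$ sufficiently close to $q$, a positive integer $k_p$ with $(k_p, k_p p) \in P(\Gamma_{U|F}(F))$: since $\text{Supp}(\Delta_{U|F}(F))$ is the closed convex cone generated by $P(\Gamma_{U|F}(F))$, this yields $(1, x) \in \text{Supp}(\Delta_{U|F}(F))$ for all real $x$ in the corresponding interval.

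The starting point is the hypothesis $F \nsubseteq B_+(\overline{D} + q\overline{E})$. By Corollary \ref{corol:consec} there is a section $s \in H^0(U, m(\overline{D} + q\overline{E}) - \overline{A})$ not vanishing on $F$, for some positive integer $m$ and some ample $\overline{A}$ on a projective model of $U$. Passing to a common projective model $X_*$ dominating both this model and the Cauchy models $\{X_j\}$ representing $\overline{D}$ and $\overline{E}$, the effectivity $\overline{D} + q\overline{E} \le D_j + qE_j + (1+|q|)q_j D_0$ coming from the Cauchy condition lets me regard $s$ as a section on $X_*$ of $m(D_j + qE_j + (1+|q|)q_j D_0) - \overline{A}$ not vanishing on the closure $F_j$ of $F$ in $X_*$. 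Hence $F_j$ is not contained in the projective augmented base locus of the big $\mathbb{Q}$-divisor $D_j + qE_j + (1+|q|)q_j D_0$. By the standard upper semi-continuity of the projective augmented base locus in the numerical class of its argument, there is $\epsilon > 0$ such that for every rational $p \in (q - \epsilon, q + \epsilon)$ we still have $F_j \nsubseteq B_+(D_j + pE_j + (1+|q|)q_j D_0)$, producing a section $s_p \in H^0(X_*, m'(D_j + pE_j + (1+|q|)q_j D_0) - \overline{A})$ not vanishing on $F_j$, with some $m' = m'(p)$.

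To transfer this back to the adelic setting, I take $j$ sufficiently large so that $\overline{A} - m'(2 + |p| + |q|) q_j D_0$ is still ample on $X_*$, which is possible because $q_j \to 0$ and the ample cone is open. Pick a very ample multiple $k(\overline{A} - m'(2+|p|+|q|)q_j D_0)$ admitting a section $u$ not vanishing on $F_j$. Then $s_p^{\otimes k} \otimes u$ is a section on $X_*$ of $km'(D_j + pE_j - (1+|p|) q_j D_0)$ not vanishing on $F_j$. The Cauchy effectivity $D_j + pE_j - (1+|p|) q_j D_0 \le \overline{D} + p \overline{E}$ upgrades this to a non-zero section of $km'(\overline{D} + p\overline{E})$ on $U$ restricting non-trivially to $F$, so $(km', km' p) \in P(\Gamma_{U|F}(F))$, as desired.

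The main obstacle is the careful bookkeeping of the $q_j D_0$ corrections across the effectivity chain: the adelic-to-projective step introduces a $+(1+|q|) q_j D_0$ term, while the projective-to-adelic step requires a $-(1+|p|) q_j D_0$ term, so the difference $(2 + |p| + |q|) q_j D_0$ must be absorbed into the ample cushion of $\overline{A}$. This is possible because $q_j$ can be made arbitrarily small while $m', p, q$ remain bounded (as $p$ ranges in $(q-\epsilon, q+\epsilon)$), but making this precise and verifying that all quantities can be chosen compatibly is the technical heart of the argument.
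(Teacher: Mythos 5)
Your strategy of pushing the problem into a projective model, applying the local constancy of the projective augmented base locus, and then transporting the resulting sections back to the adelic side has a circularity that the phrase ``technical bookkeeping'' understates. Once you fix the index $j$ (to obtain a concrete common model $X_*$ carrying $D_j$, $E_j$, the closure $F_j$, and the ample $\overline{A}$), the perturbation step produces an integer $m'=m'(p)$ over which you have no control. The transfer-back step then requires $\overline{A}-m'(2+|p|+|q|)q_jD_0$ to be ample, which you propose to arrange by ``taking $j$ sufficiently large so that $q_j$ is small.'' But increasing $j$ replaces $X_*$, $D_j$, $E_j$, $F_j$ and forces you to re-choose an ample on the new model (the pullback of $\overline{A}$ is only nef), after which the local constancy argument must be rerun and produces a new, potentially much larger $m'$. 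There is no a priori bound keeping $m'(j)\cdot q_j$ small, so the quantities cannot simply be ``chosen compatibly.'' This is not a bookkeeping issue but a genuine gap in the order of quantifiers.

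The paper avoids passing to the projective $B_+$ altogether. It fixes a single model $X_1$ with its boundary error $q_1$ (which is never required to be small), takes a fixed divisor $E_1'\le\overline{E}$ (resp.\ $E_1''\ge\overline{E}$), and perturbs the \emph{coefficient} rather than the boundary: starting from a section of $2m_0p\overline{D}+2m_0qp\overline{E}-2p\overline{A}$ not vanishing on $F$, it tensors with a section of $p\overline{A}+E_1'$ (this is very ample for $p\gg0$ by Serre's theorem on the fixed projective $X_1$), lands in $H^0(U,2m_0p\overline{D}+(2m_0qp+1)\overline{E}-p\overline{A})$ via the effectivity $E_1'\le\overline{E}$, and concludes $F\nsubseteq B_+(\overline{D}+r\overline{E})$ with $r=q+\tfrac{1}{2m_0p}$. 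The perturbation $\tfrac{1}{2m_0p}$ is made small by taking the \emph{multiplicity} $p$ large, not the boundary error $q_j$ small, so there is no circular dependency. Combined with the hypothesis $F\nsubseteq B_+(\overline{D})$ and Lemma \ref{lemma:eventuallyfree}, the cone spanned by $(1,0)$ and $(1,r)$ (plus a symmetric construction using $E_1''$ for the other sign) contains a neighbourhood of $(1,q)$, completing the proof with only Serre finiteness on a fixed model and the adelic definition of $B_+$ as inputs. You should adopt this internal-perturbation strategy; in particular you do not need the local constancy theorem for the projective $B_+$, and you should not let the boundary error $q_j$ enter the estimate at all.
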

\begin{proof}
Suppose  both $\overline{D}$ and $\overline{E}$ are given by models $D_i,E_i$ on projective models $X_i$ and rationals $\{q_i\to 0\}$ as usual. Further more we denote $E_1'=D_1-q_1D_0$ and $E_1''=E_1+q_1E_0$. We first consider the case when $q\neq 0$. Then by hypothesis there exists an integer $m_0$ depending on $\overline{A}$ such that $F\nsubseteq \text{Bs}(m_0p\overline{D}+m_0qp\overline{E}-p\overline{A})$ for some very ample divisor $\overline{A}$ on $X_1$ and for all sufficiently divisible integers $p$. Choose $p$ so large that that $E_1'+p\overline{A}$ (resp $-E_1''+p\overline{A}$) is very ample when $q>0$( resp. $q<0$). Then choosing sections in \[H^0(U,2m_0p\overline{D}+2m_0qp\overline{E}-2p\overline{A})\ \text{and}\ H^0(U,p\overline{A}+E_1')\ (\text{resp}\ H^0(U,p\overline{A}-E_1''))\] which do not vanish identically on $F$ and finally tensoring them, we get sections in\break $H^0(U,2m_0p\overline{D}+2m_0qp\overline{E}+E_1'-p\overline{A})$ (resp $H^0(U,2m_0p\overline{D}+2m_0qp\overline{E}-E_1''-p\overline{A}))$ which do not\break vanish identically on $F$. Now the effectivity relation $E_1'\le\overline{E}$ (resp $E_1''\ge \overline{E}$) induces the \break inclusion \[H^0(U,2m_0p\overline{D}+2m_0qp\overline{E}+E_1'-p\overline{A})\subseteq H^0(U,2m_0p\overline{D}+(2m_0qp+1)\overline{E}-p\overline{A})\]\[ (\text{resp.}\ H^0(U,2m_0p\overline{D}+2m_0qp\overline{E}-E_1''-p\overline{A})\subseteq H^0(U,2m_0p\overline{D}+(2m_0qp-1)\overline{E}-p\overline{A}))\] when $q>0$ (resp $q<0$). Hence noting the remark at the end of Definition \ref{def:augloc}, we conclude that $F\nsubseteq B_+(2m_0p\overline{D}+(2m_0qp+1)\overline{D})=B_+(\overline{D}+r\overline{E})$ (resp. $F\nsubseteq B_+(2m_0p\overline{D}+(2m_0qp-1)\overline{D})=B_+(\overline{D}+r\overline{E})$) where $r=q+\frac{1}{2m_0p}>q$ (resp $r=q-\frac{1}{2m_0p}<q$)  when $q>0$ (resp $q<0$). Note that then thanks to Lemma \ref{lemma:eventuallyfree} we conclude that for some large integer $p_0$ the points $p_0(1,r),\ p_0(1,0) \in \text{Supp}(\Delta_{U|F}(F))$ as $F\nsubseteq B_+(\overline{D}+r\overline{E})$ and $F\nsubseteq B_+(\overline{D})$. Hence arguing as in the proof of Lemma \ref{lemma:biginter} we obtain the claim. Finally for the case $q=0$ we repeat the arguments above in both positive and negative directions with $E_1'$ and $E_1''$ to obtain such an $\epsilon$.
\end{proof}
As a corollary of the above, we obtain the necessary property which we record next.
\begin{corollary}
\label{corol:restrinterior}
Suppose $\overline{D}$ and $\overline{E}$ are adelic divisors on a normal quasi-projective variety $U$ over $K$ and let $F$ be a closed sub-variety of $U$ with $F\nsubseteq B_+(\overline{D})$. Then for any $\vec{a}=(a_1,a_2)\in \mathbb{Q}^2$ such that $a_1\overline{D}+a_2\overline{E}$ with $F\nsubseteq B_+(a_1\overline {D}+a_2\overline{E})$ we have $\vec{a}\in\emph{int}(\emph{Supp}(\Delta_{U|F}(F)))$

\end{corollary}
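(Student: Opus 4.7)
The plan is to mirror the proof of Lemma \ref{lemma:biginter} almost verbatim, replacing the bigness condition with the condition $F \nsubseteq B_+(\cdot)$ and using Lemma \ref{lemma:restrbiginterior} in place of the direct continuity input on volumes. Since $\mathrm{Supp}(\Delta_{U|F}(F))$ is a cone, we may clear denominators and assume $\vec{a} \in \mathbb{Z}^2$. The argument then splits on the sign of $a_1$.

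The key auxiliary observation I will need is that the property of $F$ not being contained in an augmented base locus is preserved under sums: if $F\nsubseteq B_+(\overline{D}_1)$ and $F\nsubseteq B_+(\overline{D}_2)$, then $F\nsubseteq B_+(\overline{D}_1 + \overline{D}_2)$. To prove it, pick an ample $\overline{A}$ on a common projective model together with sections $s_i \in H^0(U,m_i\overline{D}_i - \overline{A})$ not vanishing identically on $F$; then $s_1^{\otimes m_2} \otimes s_2^{\otimes m_1}$ is a non-vanishing-on-$F$ section of $m_1 m_2(\overline{D}_1+\overline{D}_2) - (m_1+m_2)\overline{A}$, and Corollary \ref{corol:modind} lets us use $(m_1+m_2)\overline{A}$ as the ample divisor in the definition of $B_+$. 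Combined with the homogeneity of $B_+$ under $\mathbb{Q}$-scaling, this yields the claim.

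For $a_1 \le 0$, applying the auxiliary observation to $(-a_1)\overline{D}$ and $a_1\overline{D}+a_2\overline{E}$ gives $F \nsubseteq B_+(a_2\overline{E})$; this forces $a_2 \neq 0$ (otherwise one would have $F\nsubseteq B_+(0)$, which is absurd) and yields $F\nsubseteq B_+(\overline{E})$ or $F\nsubseteq B_+(-\overline{E})$ depending on the sign of $a_2$. Exchanging the roles of $\overline{D}$ with $\pm\overline{E}$ simply permutes the last two coordinates of $\Gamma_{U|F}(F)$ and hence of $\mathrm{Supp}(\Delta_{U|F}(F))$, so we are reduced to the case $a_1 > 0$. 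In that case, scaling by $a_1$, Lemma \ref{lemma:restrbiginterior} applied at $q = a_2/a_1$ produces an $\epsilon > 0$ with $(1,x) \in \mathrm{Supp}(\Delta_{U|F}(F))$ for every $x \in (q-\epsilon, q+\epsilon)$, and then the purely Euclidean argument at the end of the proof of Lemma \ref{lemma:biginter}—picking $\kappa>0$ small with $a_1\pm\kappa>0$ so that $(s,t)\mapsto t/s$ sends $(a_1-\kappa,a_1+\kappa)\times(a_2-\kappa,a_2+\kappa)$ into $(q-\epsilon,q+\epsilon)$—carries over unchanged and places $(a_1,a_2)$ in $\mathrm{int}(\mathrm{Supp}(\Delta_{U|F}(F)))$. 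The only real obstacle is the auxiliary closure statement about $B_+$; after that, everything is bookkeeping that parallels the bigness case, with Lemma \ref{lemma:restrbiginterior} doing the heavy lifting in place of continuity of the volume function.
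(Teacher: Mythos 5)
Your proof is correct and follows essentially the same route as the paper's: reduce to integral $\vec{a}$ via the cone structure of the support, establish that $F\nsubseteq B_+(\cdot)$ is stable under sums of adelic divisors (the paper picks a single $m$ working for both $\overline{D}_1,\overline{D}_2$, you take $m_1,m_2$ and the tensor power $s_1^{\otimes m_2}\otimes s_2^{\otimes m_1}$ — a cosmetic difference), use that closure property to reduce to $a_1>0$, and finally invoke Lemma \ref{lemma:restrbiginterior} together with the purely Euclidean $\kappa$-argument already carried out at the end of Lemma \ref{lemma:biginter}. No further comment is needed.
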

\begin{proof}
Due to the homogeneity property in Lemma \ref{lemma:restrhomogen} it is enough to show the claim for $a_i$ integers. First note that if $\overline{D}_1$ and $\overline{D}_2$ are two adelic divisors with $F\nsubseteq B_+(\overline{D}_i)$ for $i=1,2$, then $F\nsubseteq B_+(\overline{D}_1+\overline{D}_2)$. To see this pick a positive integer $m$ such that $F\nsubseteq \text{Bs}(m\overline{D}_i-\overline{A})$ for $i=1,2$ and some ample divisor $\overline{A}$ on some projective model $X$. Then choosing sections on each of the bundles non-vanishing on $F$ and tensoring them, we produce a section in $H^0(U,m(\overline{D}_1+\overline{D}_2)-2\overline{A})$ which does not vanish identically on $E$ which clearly shows that $F\nsubseteq B_+(\overline{D}_1+\overline{D}_2)$ by definition of the augmented base locus.\\
Now first suppose that $a_1\le 0$. Then as $F\nsubseteq B_+(\overline{D})$ by hypothesis, by adding $(-a_1)\overline{D}$ we deduce using our discussion above that $F\nsubseteq B_+(a_2\overline{E})=B_+(\overline{E})(\ \text{resp.}\ B_+(-\overline{E}))$ if $a_2> 0$( resp. $a_2<0$) by the remark at the end of Definition \ref{def:augloc} and clearly $a_2\neq 0$. Then switching $\overline{D}$ with $\overline{E}$( resp. $-\overline{E})$ we can assume that $a_1>0$. Then once again by the remark, we conclude that $F\nsubseteq B_+(a_1\overline{D}+a_2\overline{E})=B_+(\overline{D}+q\overline{E})$ for $q=\frac{a_2}{a_1}$. Once we obtain this then thanks to Lemma \ref{lemma:restrbiginterior} we can argue exactly as in the end of the proof of Lemma \ref{lemma:biginter} to obtain the claim.
\end{proof}
Next we show that the interior of the the support is actually non-empty- To show this we show that the graded semi-group generates the whole $\mathbb{Z}^{d+2}$ in our next Lemma.
\begin{lemma}
\label{lemma:restrnonemptint}
Suppose $\overline{D}$ and $\overline{E}$ are adelic divisors on a normal quasi-projective variety $U$ over $K$ and $F$ a closed irreducible sub-variety of $U$ with $F\nsubseteq B_+(\overline{D})$. Then $\Gamma_{U|F}(F)$ generates $\mathbb{Z}^{d+2}$ as a group.
\end{lemma}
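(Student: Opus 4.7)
The plan is to mimic the strategy of Lemma \ref{lemma:nonemptint}, replacing \emph{bigness} by the hypothesis $F\nsubseteq B_+(\cdot)$, which is the correct analogue in the restricted setting. In that earlier lemma one produced a positive integer $m$ with $m\overline{D}-\overline{E}$ big and then combined the conclusions of Lemma \ref{lemma:fingen} applied separately to $\overline{D}$ and to $m\overline{D}-\overline{E}$. Here I aim to produce a positive integer $n$ with $F\nsubseteq B_+(n\overline{D}-\overline{E})$ and then apply Lemma \ref{lemma:restrfingen} to both $\overline{D}$ and to $n\overline{D}-\overline{E}$.

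The main work is producing such an $n$, and for this I plan to dominate $\overline{E}$ by an ample divisor on a single fixed projective model, which is possible because Serre finiteness remains available on the individual models $X_i$. Concretely, fix a projective model $X_1$ with model representative $E_1$ of $\overline{E}$ and the boundary $D_0$, and let $q_1$ be the associated Cauchy constant. Using Serre's theorem on $X_1$, pick a very ample divisor $\overline{A}$ on $X_1$ with $\overline{A}-E_1-q_1D_0$ effective; the Cauchy estimate gives $E_1+q_1D_0\ge\overline{E}$ as adelic divisors, hence $\overline{A}\ge\overline{E}$. Since $F\nsubseteq B_+(\overline{D})$, and by Corollary \ref{corol:modind} the augmented base locus is independent of the chosen model ample divisor, there exist a positive integer $m$ and a section $s\in H^0(U,m\overline{D}-2\overline{A})$ not vanishing identically along $F$. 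The effectivity $2\overline{A}\ge \overline{E}+\overline{A}$ yields an inclusion $H^0(U,m\overline{D}-2\overline{A})\hookrightarrow H^0(U,m\overline{D}-\overline{E}-\overline{A})$, so the image of $s$ is a section of $m\overline{D}-\overline{E}-\overline{A}$ not vanishing along $F$. This shows $F\nsubseteq \text{Bs}(1\cdot(m\overline{D}-\overline{E})-\overline{A})$, and therefore $F\nsubseteq B_+(n\overline{D}-\overline{E})$ with $n=m$.

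With $n$ in hand, both $\overline{D}$ and $n\overline{D}-\overline{E}$ satisfy the hypothesis of Lemma \ref{lemma:restrfingen}, so $\Gamma_{U|F}(\overline{D})$ and $\Gamma_{U|F}(n\overline{D}-\overline{E})$ each generate $\mathbb{Z}^{d+1}$. Their canonical embeddings into $\Gamma_{U|F}(F)$ send $(v,k)\mapsto(v,k,0)$ and $(v,k)\mapsto(v,kn,-k)$ respectively, so the subgroup of $\mathbb{Z}^{d+2}$ generated by $\Gamma_{U|F}(F)$ contains both $\mathbb{Z}^d\times\mathbb{Z}(1,0)$ and $\mathbb{Z}^d\times\mathbb{Z}(n,-1)$. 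Since the vectors $(1,0)$ and $(n,-1)$ form a $\mathbb{Z}$-basis of $\mathbb{Z}^2$, together these generate all of $\mathbb{Z}^{d+2}$. The main obstacle is the first step: bridging the gap between the information $F\nsubseteq B_+(\overline{D})$ and an effectivity statement involving the a priori unrelated divisor $\overline{E}$; the combination of Serre finiteness on the single model $X_1$ (giving $\overline{A}\ge\overline{E}$) with Corollary \ref{corol:modind} (allowing us to use $\overline{A}$ in the definition of $B_+(\overline{D})$) is what makes this bridge possible.
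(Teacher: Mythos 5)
Your proof is correct and follows the same overall scaffolding as the paper's: produce a positive integer $n$ with $F\nsubseteq B_+(n\overline{D}-\overline{E})$, apply the third part of Lemma \ref{lemma:restrfingen} to both $\overline{D}$ and $n\overline{D}-\overline{E}$, and observe that $(1,0)$ and $(n,-1)$ generate $\mathbb{Z}^2$. The one place where you diverge is in how you produce $n$. The paper defers to the proof of Lemma \ref{lemma:restrbiginterior} specialized at $q=0$; internally that argument dominates $\overline{E}$ by a model representative $E_1+q_1D_0$ and uses Serre finiteness to make $p\overline{A}-(E_1+q_1D_0)$ very ample for $p\gg 0$. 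You instead choose $\overline{A}$ ample on $X_1$ with $\overline{A}\ge\overline{E}$ directly (again via the Cauchy bound $E_1+q_1D_0\ge\overline{E}$ and Serre finiteness on $X_1$), then invoke Corollary \ref{corol:modind} to use $2\overline{A}$ as the ample in the definition of $B_+(\overline{D})$, and split off $\overline{E}\le\overline{A}$ from $2\overline{A}$. These are morally the same domination-plus-tensoring argument, but your version is self-contained rather than a citation to another proof, which is slightly cleaner. One small point worth making explicit is the final deduction: from a section of $(m\overline{D}-\overline{E})-\overline{A}$ not vanishing along $F$ you conclude $F\nsubseteq\mathrm{Bs}\bigl((m\overline{D}-\overline{E})-\overline{A}\bigr)$, and since $B_+(m\overline{D}-\overline{E})$ is the intersection over all $k$ of $\mathrm{Bs}\bigl(k(m\overline{D}-\overline{E})-\overline{A}\bigr)$, hence contained in the $k=1$ term, you get $F\nsubseteq B_+(m\overline{D}-\overline{E})$; this is correct, but stating the containment direction would make the step airtight.
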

\begin{proof}
The proof is almost identical to the proof of Lemma \ref{lemma:nonemptint}. We just need to note that by the proof of Lemma \ref{lemma:restrbiginterior}, when $q=0$ we can find a positive integer $n$ such that $F\nsubseteq B_+(\overline{D}-\frac{1}{n}\overline{E})=B_+(n\overline{D}-\overline{E})$. The rest of the argument is identical to Lemma \ref{lemma:nonemptint} thanks to the third property in Lemma \ref{lemma:restrfingen} and as $(1,0)$ and $(n,-1)$ generate $\mathbb{Z}^2$ as a group. 
\end{proof}
Finally we are ready to state and prove the main theorem of this section.
\begin{theorem}
\label{theorem:restrglobalbody}
Suppose $\overline{D}$ and $\overline{E}$ be adelic divisors on a normal quasi-projective variety $U$ over $K$ and let $F$ be an irreducible closed sub-variety such that $F\nsubseteq B_+(\overline{D})$. Then there exists a convex body $\Delta_{U|F}(F)=\Delta_{U|F}(F,\overline{D},\overline{E})\subset \mathbb{R}^{d+2}$ with the property that for any $\vec{a}=(a_1,a_2)\in\mathbb{Q}^2$ with $F\nsubseteq B_+(a_1\overline{D}+a_2\overline{E})$, we have \[\Delta_{U|F}(a_1\overline{D}+a_2\overline{E})=\Delta_{U|F}(F)\cap(\mathbb{R}^d\times\{\vec{a}\})\]
where $\Delta_{U|F}(a_1\overline{D}+a_2\overline{E})$ is the restricted Okounkov body of $a_1\overline{D}+a_2\overline{E}$ as constructed in section 2.
\end{theorem}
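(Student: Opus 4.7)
The plan is to follow the proof of Theorem~\ref{theorem:globalbody} almost verbatim, with the ordinary Okounkov-body machinery replaced by its restricted counterpart built in Section~2.3. First I would reduce to the case $\vec{a}\in\mathbb{Z}^2$: by Lemma~\ref{lemma:restrhomogen} (applied to the adelic divisor $a_1\overline{D}+a_2\overline{E}$, which is not in $B_+$-trouble by assumption), the restricted Okounkov body scales homogeneously, so both sides of the claimed identity transform compatibly under clearing denominators of $\vec{a}$. Hence I may assume $\vec{a}=(a_1,a_2)\in\mathbb{Z}^2$.

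Next I would identify the restricted body with a natural slice of a cone. By construction the semi-group $\Gamma_{U|F}(a_1\overline{D}+a_2\overline{E})$ sits inside $\Gamma_{U|F}(F)_{\mathbb{N}\vec{a}}\subseteq\mathbb{N}^d\times\mathbb{N}\vec{a}\cong\mathbb{N}^{d+1}$, and directly from Definition~\ref{def:restrglobalbody} the two semi-groups actually coincide. Therefore by the definition of the restricted Okounkov body
\[
\Delta_{U|F}(a_1\overline{D}+a_2\overline{E})
=\Sigma\bigl(\Gamma_{U|F}(F)_{\mathbb{N}\vec{a}}\bigr)\cap\bigl(\mathbb{R}^d\times\{\vec{a}\}\bigr).
\]

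Now I would invoke the convex-geometric slicing Lemma~\ref{lemma:interior} in the form
\[
\Delta_{U|F}(F)_{\mathbb{R}\vec{a}}
=\Sigma\bigl(\Gamma_{U|F}(F)_{\mathbb{N}\vec{a}}\bigr).
\]
To justify applying that lemma I need two inputs: (i) $\Gamma_{U|F}(F)$ generates a finite-index subgroup of $\mathbb{Z}^{d+2}$, which is provided by Lemma~\ref{lemma:restrnonemptint}; (ii) $\vec{a}\in\mathrm{int}(\mathrm{Supp}(\Delta_{U|F}(F)))$, which is exactly the conclusion of Corollary~\ref{corol:restrinterior} under the hypothesis $F\nsubseteq B_+(a_1\overline{D}+a_2\overline{E})$. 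Combining the display above with the tautological identity $\Delta_{U|F}(F)_{\mathbb{R}\vec{a}}\cap(\mathbb{R}^d\times\{\vec{a}\})=\Delta_{U|F}(F)\cap(\mathbb{R}^d\times\{\vec{a}\})$ gives the desired slice formula.

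I do not anticipate a serious obstacle here: all the genuinely new content has already been isolated in Lemmas~\ref{lemma:restrbiginterior}, \ref{lemma:restrnonemptint}, \ref{lemma:restrhomogen} and Corollary~\ref{corol:restrinterior}, which were precisely the restricted-volume analogues of the ingredients used for Theorem~\ref{theorem:globalbody}. The only place requiring a little care is the verification that Definition~\ref{def:restrglobalbody} gives the \emph{same} semi-group as the one used to build $\Delta_{U|F}(a_1\overline{D}+a_2\overline{E})$ in Section~2.2 on the slice over $\vec{a}$; this is essentially by construction once one notes that the flag on $F$ chosen for the global body is the same flag used in the construction of the restricted body, so the valuation vectors match.
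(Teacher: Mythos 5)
Your proposal is correct and follows essentially the same approach as the paper's proof: reduce to $\vec{a}\in\mathbb{Z}^2$ by Lemma~\ref{lemma:restrhomogen}, identify the slice semi-group, and apply Lemma~\ref{lemma:interior} using the interior-of-support and finite-index facts. If anything, you are slightly more careful in the citations — you invoke Corollary~\ref{corol:restrinterior} (which covers arbitrary $\vec{a}$) where the paper cites Lemma~\ref{lemma:restrbiginterior} (only the $(1,q)$ case), and you explicitly note that Lemma~\ref{lemma:restrnonemptint} supplies the finite-index hypothesis of Lemma~\ref{lemma:interior}, which the paper leaves implicit.
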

\begin{proof}
Clearly it is enough to show when $\vec{a}\in\mathbb{Z}^2$ by homogeneity of Okounkov bodies( Lemma \ref{lemma:restrhomogen}). Note that the semi-group $\Gamma_{U|F}(a_1\overline{D}+a_2\overline{E})$ sits naturally in $\mathbb{N}^d\times \mathbb{N}\cdot\vec{a}\cong \mathbb{N}^{d+1}$ and by construction of $\Delta_{U|F}(\cdot)$, we deduce that $\Delta_{U|F}(a_1\overline{D}+a_2\overline{E})=\Sigma(\Gamma_{U|F}(a_1\overline{D}+a_2\overline{E})_{\mathbb{N}\vec{a}})\cap(\mathbb{R}^d\times\{\vec{a}\})$. By Lemma \ref{lemma:restrbiginterior} we get that $\vec{a}\in\text{int}(\text{Supp}(\Delta(U))$ and hence by Lemma \ref{lemma:interior} we have $\Delta_{U|F}(F)_{\mathbb{R}\vec{a}}=\Sigma(\Gamma_{U|F}(a_1\overline{D}+a_2\overline{E})_{\mathbb{N}\vec{a}})$. Hence we deduce that 
\[\Delta_{U|F}(a_1\overline{D}+a_2\overline{E})=\Sigma(\Gamma_{U|F}(a_1\overline{D}+a_2\overline{E})_{\mathbb{N}\vec{a}})\cap(\mathbb{R}^d\times\{\vec{a}\})=\Delta_{U|F}(F)_{\mathbb{R}\vec{a}}\cap(\mathbb{R}^d\times\{\vec{a}\})=\Delta_{U|F}(F)\cap(\mathbb{R}^d\times\{\vec{a}\})\]
concluding the proof.
\end{proof}
\subsection{Corollaries}
In this section we deduce some corollaries which are direct from the existence of global bodies for restricted volumes as shown in Theorem \ref{theorem:restrglobalbody}. Note that we already have the notion of restricted volume of a line bundle $L$ along the closed sub-variety $E$ of a projective variety $X$  defined similarly as defined before Lemma 2.16 in \cite{lazarsfeld2008convex} which we denote by \emph{projective restricted volume} in the next corollary.
\begin{corollary}
\label{corol:restrcoonvbod}
Suppose $\overline{D}$ is an adelic divisor on a normal quasi-projective variety $U$ over $K$ and suppose $F$ is a closed irreducible sub-variety of $U$ with $F\nsubseteq B_+(\overline{D})$. Furthermore suppose $\overline{D}$ is given by a Cauchy sequence of models $\{X_i,D_i\}$ and let $F_j$ be the Zariski closure of $F$ in $X_j$. Then we have
\[\lim_{i\to\infty}d_H(\Delta_{U|F}(\overline{D}),\Delta_{U|F}(\overline{D_i}))=0\]
where $d_H(\cdot,\cdot)$ is the Hausdorff metric and $\overline{D_i}$ is $D_i$ considered as a model adelic divisor. In particular, we have
\[\widehat{\emph{vol}}_{U|F}(\overline{D})=\lim_{i\to\infty}\emph{vol}_{X_i|F_i}(O(D_i))\]
where $\emph{vol}_{X_i|F_i}(O(D_i))$ is the projective restricted volume of the line-bundle $O(D_i)$ with respect to $F_i$.
\end{corollary}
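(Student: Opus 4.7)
The plan is to mirror the proof of Corollary~\ref{corol:coonvbod}, substituting restricted Okounkov bodies for ordinary ones throughout. First I would record the restricted analog of Lemma~\ref{lemma:easy}: the Cauchy relations $\overline{D}-q_j\overline{D_0}\le\overline{D_j}\le\overline{D}+q_j\overline{D_0}$ yield inclusions of global sections that survive restriction to $F$ (since the chosen flag lies in $F$, valuation vectors are preserved), giving the sandwich
\[
\Delta_{U|F}(\overline{D}-q_j\overline{D_0})\subseteq\Delta_{U|F}(\overline{D_j})\subseteq\Delta_{U|F}(\overline{D}+q_j\overline{D_0}).
\]
It then suffices to control the Hausdorff distance between the outer bodies as $j\to\infty$.

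To do so, I would apply Theorem~\ref{theorem:restrglobalbody} with $\overline{E}=\overline{D_0}$ to produce a global convex body $\Delta_{U|F}(F,\overline{D},\overline{D_0})$ whose fiber over any $(1,t)\in\mathbb{Q}^2$ with $F\nsubseteq B_+(\overline{D}+t\overline{D_0})$ equals $\Delta_{U|F}(\overline{D}+t\overline{D_0})$. Corollary~\ref{corol:restrinterior} places $(1,0)$ in the interior of the support of this global body, so Khovanskii's Theorem~13 of \cite{Khovanskii2012-sb} yields continuous variation of fibers in Hausdorff metric. Applied at $(1,\pm q_j)$ as $j\to\infty$, this forces the two outer bodies to converge to $\Delta_{U|F}(\overline{D})$; combined with the sandwich this gives the first claim of the corollary.

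The main obstacle will be verifying that $F\nsubseteq B_+(\overline{D}\pm q_j\overline{D_0})$ for all sufficiently large $j$, so that the outer bodies of the sandwich are indeed the genuine restricted Okounkov bodies appearing as fibers above. For the $+q_j$ direction this is immediate: any section $s\in H^0(U,m\overline{D}-\overline{A})$ witnessing $F\nsubseteq B_+(\overline{D})$ is simultaneously a section of $m(\overline{D}+q_j\overline{D_0})-\overline{A}$ by effectivity of $q_j\overline{D_0}$, non-vanishing on $F$. For the $-q_j$ direction I would pass to a projective model $X$ dominating $X_0$ on which both $\overline{A}$ and $\overline{D_0}$ live, and use openness of the ample cone to ensure that $\overline{A}-mq_j\overline{D_0}$ remains ample on $X$ for $q_j$ small enough; the same section $s$ then witnesses the required non-containment via the identity $m(\overline{D}-q_j\overline{D_0})-(\overline{A}-mq_j\overline{D_0})=m\overline{D}-\overline{A}$.

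Once the first claim is in hand, the second follows formally: Theorem~\ref{theorem:restrvol} identifies Euclidean volumes of restricted Okounkov bodies with $\widehat{\text{vol}}_{U|F}/d!$, and the standard identification of adelic sections of a model divisor with projective sections on the model (Lemma~5.1.5 of \cite{yuan2021adelic}, applied section by section and then factored through restriction to $F_j$) gives $\widehat{\text{vol}}_{U|F}(\overline{D_j})=\text{vol}_{X_j|F_j}(O(D_j))$. Applying Shephard's Theorem~7 of \cite{Shephard1965MetricsFS} to turn the Hausdorff convergence into Lebesgue-volume convergence then completes the proof.
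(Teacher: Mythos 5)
Your proposal follows essentially the same path as the paper: sandwich $\Delta_{U|F}(\overline{D_j})$ between $\Delta_{U|F}(\overline{D}\pm q_j\overline{D_0})$, apply Theorem~\ref{theorem:restrglobalbody} with $\overline{E}=\overline{D_0}$ and Khovanskii's Theorem~13 to get Hausdorff convergence of the outer bodies to $\Delta_{U|F}(\overline{D})$, and then pass to volumes via Theorem~\ref{theorem:restrvol} plus the identification of adelic and projective sections on a fixed model. The one place you genuinely diverge is the verification that $F\nsubseteq B_+(\overline{D}\pm q_j\overline{D_0})$ for $j\gg 0$ (and hence $F\nsubseteq B_+(\overline{D_j})$, needed to invoke Theorem~\ref{theorem:restrvol} on the models): the paper refers to the machinery of Lemma~\ref{lemma:restrbiginterior}, whereas you give a more direct Kodaira-type argument. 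Both routes work, and yours is arguably cleaner; but one phrasing needs care: you cannot ``pass to a projective model $X$ dominating $X_0$ on which both $\overline{A}$ and $\overline{D_0}$ live'' and expect the pulled-back $\overline{A}$ to remain ample, since ampleness is not preserved under birational pullback (a point the paper stresses repeatedly). The fix is immediate: by Corollary~\ref{corol:modind} the augmented base locus is independent of the chosen model ample divisor, so you may simply take $\overline{A}$ ample on $X_0$ from the outset, after which the openness-of-the-ample-cone argument for $\overline{A}-mq_j\overline{D_0}$ is carried out entirely on $X_0$ with no pullback required. With that repair your proof is correct.
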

\begin{proof}
The proof is very similar to that of the proof of Lemma \ref{corol:coonvbod}. We begin by noting the set of inclusions 
\[\Delta_{U|F}(\overline{D}-q_j\overline{D_0})\subseteq \Delta_{U|F}(\overline{D_j})\subseteq \Delta_{U|F}(\overline{D}+q_j\overline{D_0})\]
where we put overlines to emphasize that they are looked as model divisors. Now the first claim follows once again noting that the two extremities of the above inclusions converge under the Hausdorff metric thanks to Theorem \ref{theorem:restrglobalbody} and Theorem 13 of \cite{Khovanskii2012-sb} when $q_j$ is small enough. Then note that from Lemma \ref{lemma:restrbiginterior}, as $F\nsubseteq B_+(\overline{D})$ we conclude that $F\nsubseteq B_+(\overline{D}-q_j\overline{D_0})\supseteq B_+(\overline{D_j})$ for large enough $j$ as $q_j\to 0$. Hence for large enough $j$ we have $F\nsubseteq B_+(\overline{D_j})$ which implies $\text{vol}(\Delta_{U|F}(\overline{D_j}))=\frac{1}{k!}\widehat{\text{vol}}_{U|F}(\overline{D_j})=\frac{1}{k!}\text{vol}_{X_j|F_j}(O(D_j))$ thanks to Theorem $\ref{theorem:restrvol}$ which now clearly gives the second claim together with the first claim.
\end{proof}
\begin{corollary}[log-concavity]
\label{corol:restrlogconcave}
Suppose $\overline{D_i}$ are two adelic divisors on a normal quasi-projective variety $U$ over $K$ for $i=1,2$. Furthermore suppose $F$ is a closed irreducible sub-variety of $U$ with $F\nsubseteq \emph{Bs}(\overline{D_i})$ for $i=1,2$. Then we have 
\[\widehat{\emph{vol}}_{U|F}(\overline{D_1}+\overline{D_2})^{\frac{1}{k}}\ge \widehat{\emph{vol}}_{U|F}(\overline{D_1})^{\frac{1}{k}}+\widehat{\emph{vol}}_{U|F}(\overline{D_2})^{\frac{1}{k}}\]
where $\emph{dim}(E)=k$.
\end{corollary}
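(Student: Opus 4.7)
The plan is to mimic the strategy used for the global log-concavity in Corollary \ref{corollary:cont}, namely to reduce the adelic assertion to the projective restricted case by approximation, and then invoke the projective Brunn--Minkowski-type inequality for restricted volumes established in \cite{lazarsfeld2008convex} (see the restricted-volume section of their paper, whose analogue of Corollary 4.12 yields log-concavity of $\text{vol}_{X|F}$ on the big cone along $F$). The hypothesis is $F\nsubseteq B_+(\overline{D_i})$ (reading the ``Bs'' in the statement as the augmented base locus, which is what the convex-body machinery requires).

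First I would verify that $F\nsubseteq B_+(\overline{D_1}+\overline{D_2})$, which is exactly the observation recorded in the first paragraph of the proof of Corollary \ref{corol:restrinterior}: pick $m$ and an ample model divisor $\overline{A}$ so that $F\nsubseteq \text{Bs}(m\overline{D_i}-\overline{A})$ for $i=1,2$, tensor two sections non-vanishing along $F$ to produce a section of $m(\overline{D_1}+\overline{D_2})-2\overline{A}$ not vanishing on $F$, and conclude by the definition of the augmented base locus. This ensures the restricted volume on the left-hand side is governed by a genuine Okounkov body via Theorem \ref{theorem:restrvol}.

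Next I would use Corollary \ref{corol:restrcoonvbod} applied to $\overline{D_1}$, $\overline{D_2}$, and $\overline{D_1}+\overline{D_2}$ to express each of the three adelic restricted volumes as the limit of projective restricted volumes $\text{vol}_{X_j|F_j}(O(D_{i,j}))$ and $\text{vol}_{X_j|F_j}(O(D_{1,j}+D_{2,j}))$ along the respective Zariski closures $F_j$ in a common projective model $X_j$. By the argument already used in the proof of Corollary \ref{corol:restrcoonvbod}, for $j$ large enough we have $F_j\nsubseteq B_+(\overline{D_{i,j}})$ for $i=1,2$ and for the sum, since $q_j\to 0$ and $B_+$ is open in a suitable sense (Lemma \ref{lemma:restrbiginterior}). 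Thus the projective restricted log-concavity of \cite{lazarsfeld2008convex} applies at each stage $j$, giving
\[
\text{vol}_{X_j|F_j}\!\bigl(D_{1,j}+D_{2,j}\bigr)^{1/k}\ \ge\ \text{vol}_{X_j|F_j}\!\bigl(D_{1,j}\bigr)^{1/k}+\text{vol}_{X_j|F_j}\!\bigl(D_{2,j}\bigr)^{1/k},
\]
and passing to the limit $j\to\infty$ with Corollary \ref{corol:restrcoonvbod} produces the desired inequality.

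The main obstacle I foresee is the bookkeeping in step two: one must ensure that $D_{1,j}$, $D_{2,j}$ and their sum can be taken to live on a \emph{common} projective model $X_j$ (so that the addition makes sense on the nose and so that a single flag suffices), and that the non-containment $F_j\nsubseteq B_+$ is simultaneously preserved for all three divisors for large $j$. This is essentially the ``openness of the good locus'' built into Lemma \ref{lemma:restrbiginterior} together with the compatibility of Cauchy sequences under addition, but writing it out carefully requires some care with the choice of boundary divisor. Everything else is a direct transfer of the projective statement through the approximation theorem.
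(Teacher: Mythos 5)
Your proposal is correct and follows essentially the same route as the paper: observe that $F\nsubseteq B_+(\overline{D_1}+\overline{D_2})$ (by the tensoring argument already recorded in Corollary \ref{corol:restrinterior}), reduce to the projective restricted case via Corollary \ref{corol:restrcoonvbod}, and invoke the projective log-concavity from \cite{lazarsfeld2008convex}. Your reading of ``$\text{Bs}$'' in the statement as $B_+$ and your attention to working on a common model for large $j$ are both consistent with what the paper's terse proof implicitly relies on.
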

\begin{proof}
 When $F\nsubseteq B_+(\overline{D_i})$ for both $i$, so is their sum and hence passing to models, we are reduced to the claim in the projective setting thanks to Corollary \ref{corol:restrcoonvbod}. The projective case can be deduced from the existence of global bodies as indicated in Example 4.22 of \cite{lazarsfeld2008convex}.
\end{proof}
\section*{Acknowledgements}
The author thanks Walter Gubler and Roberto Gualdi for numerous fruitful discussions in the process of preparation of this article.

\printbibliography

\end{document}